\newif\ifArXiV
\newtheorem{theorem}{Theorem}
\newtheorem{definition}[theorem]{Definition}
\newtheorem{lemma}[theorem]{Lemma}
\newtheorem{corollary}[theorem]{Corollary}
\newtheorem{observation}[theorem]{Observation}
\theoremstyle{remark}
\let\oldnl\nl
\newcommand{\nonl}{\renewcommand{\nl}{\let\nl\oldnl}}
\DeclareMathOperator{\st}{s.t.}
\newcommand{\obj}{z}
\newcommand{\loc}{J}
\newcommand{\cus}{I}
\newcommand{\LBGS}{LB^{\#}}
\newcommand{\PPCaLP}{\mathcal P(PC1)}
\newcommand{\PPCbLP}{\mathcal P(PC2)}
\newcommand{\mytag}[1]{(\hypertarget{#1}{\mathrm{#1}})}
\newcommand{\myref}[1]{\textnormal{(\hyperlink{#1}{#1})}}
\newcommand{\PCP}{pCP\xspace}
\newcommand{\BC}{B\&C\xspace}
\newcommand{\blue}[1]{{\color{blue}#1}}
\newenvironment{frontmatter}{}{}
\newenvironment{keyword}{\small \textbf{Keywords:}}{}
\let\address\affil
\renewcommand{\blue}[1]{{#1}}
\begin{document}
		\begin{frontmatter}
			\title{
				A scaleable projection-based branch-and-cut algorithm for the 
				\texorpdfstring{$p$}{p}-center problem}
			%
			%
			
			\ifArXiV
			\author[1]{Elisabeth Gaar\thanks{elisabeth.gaar@jku.at}}
			\author[1,2]{Markus Sinnl\thanks{markus.sinnl@jku.at}}
			\affil[1]{Institute of Production and Logistics Management, 
			Johannes Kepler University Linz, Linz, Austria}
			\affil[2]{JKU Business School, Johannes Kepler University 
			Linz, Linz, Austria}			
			\date{}
			\maketitle
			
			\else
			
			\author[jku]{Elisabeth Gaar}
			\ead{elisabeth.gaar@jku.at}
			\author[jku,jkubus]{Markus Sinnl}
			\ead{markus.sinnl@jku.at}
			\address[jku]{Institute of Production and Logistics Management, 
			Johannes Kepler University Linz, Linz, Austria}
			\address[jkubus]{JKU Business School, Johannes Kepler University 
				Linz, Linz, Austria}	
					
			\fi

\begin{abstract}
The $p$-center problem (\PCP) is a fundamental problem in location science, where 
we are given customer demand points and possible facility locations, and 
we want 
to choose $p$ of these locations to open 
a facility such that the maximum distance of any customer demand point to its 
closest open
facility is minimized. 
State-of-the-art 
solution 
approaches of \PCP use its connection to the set cover problem to solve 
\PCP in an iterative fashion by repeatedly solving set cover problems. The classical textbook integer programming (IP) 
formulation of \PCP is usually dismissed due to its size and bad linear 
programming (LP)-relaxation bounds.  

We present a novel solution approach that works on a new IP 
formulation that can be obtained by a projection from the classical 
formulation. The formulation is solved by means of branch-and-cut, where cuts 
for demand points are iteratively generated. Moreover, the formulation can be strengthened with 
combinatorial information to obtain a much tighter LP-relaxation. In 
particular, 
we present a novel way to use lower bound information to obtain stronger cuts. 
We show that the LP-relaxation bound of our strengthened formulation has 
the same strength as the best known bound in literature, which is based on a 
semi-relaxation.

Finally, we also present a 
computational study on instances \blue{from the literature} with up to more than 700000 
customers and locations. Our solution algorithm is competitive with highly 
sophisticated set-cover-based solution algorithms, which depend on various 
components and parameters.

 
\begin{keyword}
location; $p$-center problem; integer 
programming formulation; lifting; min-max objective
\end{keyword}
\end{abstract}
\end{frontmatter}

\section{Introduction}

The \emph{(vertex) $p$-center problem} (\PCP) is a fundamental problem in 
location science \citep{laporte2019location,snyder2011fundamentals}, where
we are given customer demand points and potential facility locations, and 
we want to choose $p$ of these locations to open 
a facility such that the maximum distance of any customer demand point to its 
closest open
facility is minimized. The \PCP is \emph{NP-hard} \citep{kariv1979algorithmic} 
and has applications where it is critical that all customer demand points can 
be reached quickly, i.e., at least one open facility can be reached quickly by 
each customer demand point. This includes emergency service locations, such as 
ambulances or fire stations, and also relief actions in humanitarian crisis 
\citep{calik2013double,lu2013robust,jia2007modeling}. \blue{Next to applications in location science, the \PCP is also used for clustering of large-scale data \citep{geraci2009k,kleindessner2019fair,malkomes2015fast}, for feature selection \citep{meinl2011maximum} and in computer vision \citep{friedler2010approximation}.}

A formal definition of the \PCP is as follows. Given an integer $p$, a 
set of customer 
demand points $\cus$ with cardinality $|\cus|=n$, a set of potential facility 
locations $\loc$ of cardinality $|\loc|=m \geq p$ and a distance $d_{ij}$ from 
a 
customer 
demand point $i$ to the potential facility location $j$ for every $i \in \cus$ 
and 
$j \in \loc$,
find a subset $S \subseteq \loc$ with cardinality $|S|=p$ of facilities to 
\emph{open} such that the maximum 
distance between a customer demand point and its closest 
open facility is minimized, i.e., such that 
$\max_{i \in \cus} \min_{j \in S} \{d_{ij} \}$ is minimized.


There exists various integer programming (IP) formulations for the \PCP. In 
this paper we focus on the classical textbook formulation, see for 
example~\citet{daskin2013network}. It uses binary variables $y_j$ to indicate 
whether a facility opens at the potential facility location $j \in \loc$, 
binary variables $x_{ij}$ to indicate whether a customer demand point $i \in 
\cus$ 
is 
served by the the potential facility location $j \in \loc$
and a continuous variable~$z$ to measure the distance in the objective function.
\begin{subequations}
\begin{alignat}{3}
\mytag{PC1} \qquad
& \min & \obj \phantom{iiiii} \label{pc1:z}  \\ 
& \st~ &  \sum_{j \in \loc} y_j &= p \label{pc1:sumy} \\       
&& \sum_{j \in \loc} x_{ij} & =  1 && \forall i \in \cus \label{pc1:sumx} \\
&& x_{ij} &\leq y_j && \forall i \in \cus, \forall j \in \loc\label{pc1:xy}\\   
&& \sum_{j \in \loc} d_{ij} x_{ij} & \leq  \obj && \forall i \in \cus 
\label{pc1:sumdx}\\
&& x_{ij} &\in  \{0,1\} \qquad&& \forall i \in \cus, \forall j \in \loc  
\label{pc1:xbin}\\
&& y_{j} &\in  \{0,1\} && \forall j \in \loc  \label{pc1:ybin}\\
&& \obj & \in \mathbb{R}
\end{alignat}
\end{subequations}

In particular, \eqref{pc1:sumy} makes sure that exactly $p$ facilities are 
open, \eqref{pc1:sumx} guarantees that every customer is 
assigned to a 
facility, \eqref{pc1:xy} ensures that customers are only assigned to open 
facilities, \eqref{pc1:sumdx} forces $\obj$ to be at least the distance of any 
customer to its assigned facility, the objective function~\eqref{pc1:z} 
minimizes this $\obj$ and the constraints~\eqref{pc1:xbin} and~\eqref{pc1:ybin} 
make certain that $x$ and $y$ are binary. 
However, this formulation suffers from bad linear programming (LP)-relaxation 
bounds
\citep[see, e.g.,][]{snyder2011fundamentals} and also has scalability issues, as there are 
$O(|\cus|\cdot|\loc|)$ variables and constraints and large-scale instances 
\blue{from the literature} for the \PCP usually have $I=J=V$ with $|V|$ in the 
ten-thousands.

Thus, state-of-the-art exact approaches \citep[see, 
e.g.,][]{chen2009,contardo2019scalable}
for the \PCP use its connection to the \emph{set 
cover problem (SCP)}. 
Given a ground set $U$ and a set $W$ of 
subsets of~$U$, such that the union of the sets in $W$ is $U$, in the SCP we 
want to 
find a 
subset $T$ of $W$ of minimum cardinality, such that the union of the sets in 
$T$ is $U$. 
The question of whether the 
optimal objective function value of an instance of the \PCP is less than or 
equal 
to a given value $\bar z$ can be modeled by an instance of the SCP: we set 
$U=\cus$, define $W_j = \{i \in \cus: d_{ij} \leq \bar z\}$ for 
	every $j \in \loc$, and set $W = 
\cup_{j \in \loc} W_j$. Then the optimal objective function of \PCP is less 
than 
or 
equal to $\bar z$ if and only if the corresponding set cover instance has an 
optimal solution that uses at most $p$ sets. 
State-of-the-art exact 
approaches for the \PCP use this connection and solve the \PCP by iteratively 
solving SCPs for different values of $\bar z$. 
More 
details on this approach and also other existing IP formulations are given in 
Section \ref{sec:litreview}.

\subsection{Contribution and outline}

In this paper, we present a novel solution approach that works on an IP 
formulation that can be obtained by a projection from the classical 
formulation. The formulation is solved by means of branch-and-cut \blue{(\BC)}, 
where cuts 
for customer demand points are iteratively generated. This makes the method 
suitable for 
large scale instances, as the complete distance matrix does not need to be kept in memory.
Moreover, we show how our formulation can be strengthened with 
combinatorial information to obtain a much tighter LP-relaxation compared to 
the 
LP-relaxation of the classical formulation. This strengthening procedure uses a 
novel way to use lower bound information to obtain stronger cuts. We also show 
how our formulation is connected to the SCP and present a 
computational comparison with state-of-the-art solution algorithms on instances 
\blue{from the literature}. 

In the remainder of this section, we discuss previous and related work to the 
\PCP. Section \ref{sec:formulation} contains our new formulation, and Section \ref{sec:lifting}
the strengthening procedure for the inequalities included in our formulation 
and theoretical results on the connection to the SCP and existing lower bounds \blue{from the literature}. Section 
\ref{sec:implementation} describes implementation details of our \blue{\BC} 
based solution algorithm. In Section \ref{sec:results}, the computational study 
is presented. Finally, Section \ref{sec:conclusion} 
concludes the paper.

\subsection{Literature review}
\label{sec:litreview}

The \PCP was first mentioned in 1965 by \citet{hakimi1965} and it was 
proven to be NP-hard by \citet{kariv1979algorithmic} 
for $p \geq 2$, even in the case that $I = J$ is the set of vertices of a 
planar graph with maximum degree three and all distances are equal to one. 
However, 
there are special cases of \PCP that can be solved in polynomial time, for 
example it can be solved in $O(|\cus|^2 \log |\cus|)$ time if $\cus = \loc$ is 
the set of vertices of a tree \citep{kariv1979algorithmic}.

Since the introduction of the \PCP there has been an tremendous amount on work 
on 
both heuristic and exact solution methods.
As our work is on the design of an exact solution algorithm, we focus our 
literature review on existing exact methods and refer to the recent 
survey~\citep{garcia2019survey} for approximation algorithms and heuristics for 
the \PCP.


The first exact solution method for the \PCP was proposed by~\citet{minieka1970} and used the above described connection with the 
SCP. Minieka suggested to 
start with an arbitrary solution of the \PCP and then iteratively solve SCPs in 
order to find 
out whether there is a better solution. 
Let $D=\{d_{ij}: i \in 
\cus, j \in \loc\}$  denote the set of all possible distances 
and let $d_1$, $\ldots$, $d_{K}$ be the values contained in $D$, so 
$D=\{d_1, \ldots, d_{K}\}$. 
Clearly, the 
optimal 
objective function value of \PCP is in $D$ and there 
are at most $|\cus|\cdot|\loc|$ potential optimal values.
Minieka's approach 
can be interpreted as searching for the optimal value in $D$ by first removing 
some 
values from $D$ and then going through 
all remaining values of $D$ one by one starting with the largest one and 
checking 
their optimality.  
Later \cite{garfinkel1977} elaborated this approach 
and proposed to use a 
heuristic to obtain a starting solution in order to further reduce potential 
optimal values in $D$ 
and to then use a binary search with the remaining values of $D$ instead of a 
linear search. 

In the early 2000s a lot of work was done on the \PCP.
\citet{ilhan2001} introduced a two-phase algorithm based on the 
idea of Minieka, where \blue{they}
solve feasibility linear programs (LPs) in order to obtain a good lower bound 
in the 
first phase, and iteratively solve feasibility SCPs within  
performing a linear search on $D$ starting from the lower bound in the second 
phase. 
Later \citet{alKhedhairi2005} proposed some enhancements 
to this approach in order to reduce the number of iterations of the second 
phase.
\citet{ilhan2002} perform a binary search to 
detect a suitable lower bound by solving LPs, and then 
perform a linear 
search on $D$.
\citet{caruso2003} use the connection to the SCP 
by providing two heuristics and two exact alogorithms for the \PCP based on 
weak 
and strong dominance relationships.

In 2004, \cite{elloumi2004} introduced a new IP 
formulation for the \PCP. This formulation has a binary variable $y_j$ for $j 
\in \loc$ indicating whether facility $j$ opens, analogously 
to~\myref{PC1}. 
Furthermore there is a binary variable for each 
 value in $D$ that indicates whether the optimal value of 
\PCP is less or equal than this value. 
Towards this end let  $u_k = 0$ if all customers have an open facility with 
distance at most $d_{k-1}$, otherwise $u_k = 1$ for all 
$k \in \{2, \dots, K\}$. 
\begin{subequations}
\begin{alignat}{3}
\mytag{PCE} \qquad
& \min & \blue{d_1} + \sum_{k=2}^{K} (d_k &- d_{k-1})u_k  \label{pcE:ojb}  \\ 
& \st~ &  \sum_{j \in \loc} y_j &\leq p \label{pcE:sumyp} \\       
&&\sum_{j \in \loc} y_j &\geq 1 \label{pcE:sumy1} \\  
&& u_k + \sum_{j\colon d_{ij} < d_k} y_{j} &\geq  1 && \forall i \in \cus, 
\forall k \in \{2, \dots, K\} \label{pcE:sumyu} \\
&& u_{k} &\in  \{0,1\} \qquad&& \forall  k \in \{2, \dots, K\}\label{pcE:ubin}\\
&& y_{j} &\in  \{0,1\} && \forall j \in \loc  
\end{alignat}
\end{subequations}
\cite{elloumi2004} show that the lower bound 
obtained 
by the LP-relaxation of this IP formulation \myref{PCE} is tighter than the 
one from the 
LP-relaxation of the classical IP formulation~\myref{PC1} and how the 
advantages 
of their 
formulation can help in the computation of an optimal solution. 

Yet another IP formulation was introduced by
\citet{calik2013double}.
They also have the same binary variables $y$ as 
in~\myref{PC1}, and additionally to that there is a binary variable for each  
value in~$D$ that indicates whether the optimal value of 
\PCP is exactly this value. Thus, in contrast to the IP formulation 
from~\cite{elloumi2004} here exactly one of this new binary variables is equal 
to one. \citet{calik2013double} also investigated the connection of their IP 
formulation to the one of~\cite{elloumi2004} and provided the best  
LP-relaxation for the \PCP so far. They also utilize their IP formulation 
within 
an exact solver for the \PCP based on successive restrictions of this new 
formulation.

\cite{chen2009} introduced an LP-relaxation-based iterative SCP-based
algorithm, in which they only consider some of the customer demand points in 
each iteration. So, instead of having few iterations with large sub-problems 
like 
previous approaches, they considered many iterations with small 
sub-problems. 
Recently, \cite{contardo2019scalable} have 
successfully enhanced this idea of considering only a subset of costumer demand 
points and included it into an exact SCP-based binary search algorithm for the 
\PCP, that allows to solve large scale instances.

Finally, several different variants of the 
\PCP have also been investigated, among them the capacitated \PCP, where each 
potential facilitiy location has a maximum capacity, the continuous \PCP, where 
the potential facility locations are not restricted to a set, and several 
versions with uncertain parameters. We refer to the chapter on the \PCP 
\citep{calik2019p} 
in~\citet{laporte2019location} for more details.

%
%
%

\section{A new IP formulation for the \texorpdfstring{$p$}{p}-center problem} 
\label{sec:formulation}

\blue{
In this section, we start by presenting our new IP formulation for 
the 
$p$-center problem in Section~\ref{sec:newIPFormulation}. 
Next, we show that the feasible region of the LP-relaxation of our new 
formulation can be seen as a projection of the feasible region of the 
LP-relaxation of~\myref{PC1} in Section~\ref{sec:lprelaxation}.
Furthermore, we provide details about how our new formulation can be obtained 
from~\myref{PC1} as Benders decomposition in Section~\ref{sec:benders}.
}

\subsection{New IP formulation}\label{sec:newIPFormulation}
\blue{Our new IP formulation \myref{PC2} for 
the 
$p$-center problem} uses $y$ and 
$z$ variables with the same meaning as in \myref{PC1}, i.e., the binary 
variables $y_j$ indicate whether a facility opens at location $j \in \loc$ 
and the continuous variable~$z$ represents the distance in the objective 
function.
\begin{subequations}
\begin{alignat}{3}
\mytag{PC2}
 \qquad 
& \min & \obj    \\ 
& \st~ &  \sum_{j \in \loc} y_j &= p   \label{pc2:sumy} \\       
&&  \obj &\geq d_{ij} - 
\sum_{j' \colon   d_{ij'} < d_{ij}} (d_{ij} - d_{ij'})y_{j'} \qquad
&& \forall i \in \cus, \forall j \in \loc  \label{eq:cuts} \\ 
&& y_{j} &\in  \{0,1\} && \forall j \in \loc  \label{pc2:ybin} \\
&& \obj & \in \mathbb{R}
\end{alignat}
\end{subequations}

Next, we  
give combinatorial  arguments for the correctness of \myref{PC2}. 
The constraint~\eqref{pc2:sumy} makes sure that exactly $p$ facilities 
open and 
\eqref{pc2:ybin} forces $y$ to be binary.
The constraints~\eqref{eq:cuts} ensure that $\obj$ is at least the distance to 
the 
nearest open facility for each customer $i \in \cus$ and work as follows: For 
each $j \in \loc$, they ensure that $\obj$ is at least $d_{ij}$ in case no 
closer
facility $j'$ to customer~$i$ (i.e., a facility $j'$ with $d_{ij'}<d_{ij}$) is 
open. Thus, $\obj$ 
has to take at least the distance to the nearest open facility to customer $i$. 
Moreover, if for $j \in \loc$ at least one closer facility is open, then let 
$j'$ be the closest facility to customer $i$ that is open. Then the right-hand 
side of~\eqref{eq:cuts} is at most $d_{ij}-(d_{ij}-d_{ij'})=d_{ij'}$ 
(maybe even more is \blue{subtracted} from $d_{ij}$), thus 
$\obj$ is forced to be larger than a value that is at most $d_{ij'}$, which is 
the distance of customer $i$ to its closest open facility. Thus, 
constraints~\eqref{eq:cuts} never lead to an 
overestimation of the distance of a customer to its nearest open facility.

We observe that formulation  \myref{PC2} has $O(|\loc|)$ variables and 
$O(|I|\cdot|\loc|)$ constraints. Thus, the number of variables is $O(|\cus|)$ 
times less than for formulation \myref{PC1}. However, similar to \myref{PC1}, 
the number of constraints can become prohibitive for solving large scale 
instances. We thus do not solve \myref{PC2} directly, but use a branch-and-cut 
approach, where a lifted version of constraints~\eqref{eq:cuts} are separated 
on-the-fly. Section~\ref{sec:lifting} describes the lifting of~\eqref{eq:cuts}  
and Section~\ref{sec:implementation} discusses the separation.

\subsection{Projection-based point of view}
\label{sec:lprelaxation}
Before exploiting \myref{PC2} computationally, we show the connection between 
\myref{PC1} and \myref{PC2}.
\blue{In particular, we show that 
the feasible region of the 
LP-relaxation of \myref{PC2} is a projection of the feasible region of 
the LP-relaxation of \myref{PC1}. Thus, the LP-relaxations 
of \myref{PC1} and \myref{PC2} give the same bound for the \PCP.
}

\blue{Towards this end,} keep in mind that we have already argued that both 
IPs model the  \PCP. 
We start by examining their 
LP-relaxations. 
Let $\PPCaLP$ be the feasible region of the LP-relaxation 
of \myref{PC1}, i.e., 
$$\PPCaLP=\{(x,y,z)\in \mathbb R^{|I|\cdot|J|+|J|+1}: 
\eqref{pc1:sumy},\eqref{pc1:sumx},\eqref{pc1:xy},\eqref{pc1:sumdx},
~x_{ij}\geq 0 \quad \forall i \in \cus ~ \forall j \in \loc, 
~0\leq y_j\leq 1 \quad \forall j \in \loc \} $$
and let $\PPCbLP$ be the feasible region of the LP-relaxation of \myref{PC2}, 
i.e., 
$$\PPCbLP=\{(y,z)\in \mathbb R^{|J|+1}: \eqref{pc2:sumy},\eqref{eq:cuts}, 
~0\leq y_j\leq 1 \quad \forall j \in \loc \}. $$

\begin{theorem}\label{th:projection}
The feasible region $\PPCbLP$ is the projection of $\PPCaLP$ to the space of 
$(y,z)$-variables, so $\PPCbLP=\{(y,z)\in \mathbb R^{|\loc|+1}: \exists x \in 
\mathbb R^{|\cus|\cdot|\loc|}: (x,y,z) \in \PPCaLP \}$.
\end{theorem}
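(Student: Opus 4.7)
The plan is to establish the theorem via two set inclusions, first the ``projection lies in $\PPCbLP$'' direction and then the reverse ``every point of $\PPCbLP$ lifts to $\PPCaLP$'' direction.

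For the forward inclusion, I would take $(x,y,z)\in\PPCaLP$ and derive each inequality \eqref{eq:cuts}. Fix $i\in\cus$ and $j\in\loc$. Starting from \eqref{pc1:sumdx}, I split the sum according to whether $d_{ij'}<d_{ij}$ or $d_{ij'}\geq d_{ij}$ and use \eqref{pc1:sumx} to rewrite $\sum_{j'\colon d_{ij'}\geq d_{ij}}x_{ij'}=1-\sum_{j'\colon d_{ij'}<d_{ij}}x_{ij'}$. Bounding $d_{ij'}$ below by $d_{ij}$ in the first piece and then invoking \eqref{pc1:xy} on the remaining strictly-closer indices (where the coefficient $d_{ij}-d_{ij'}>0$ has the right sign to preserve the inequality) yields \eqref{eq:cuts}. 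This is essentially the Benders-style derivation previewed in Section~\ref{sec:benders}.

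The reverse inclusion is the main part. Given $(y,z)\in\PPCbLP$, I would exhibit an $x$ that lifts $(y,z)$ into $\PPCaLP$ by \emph{greedy assignment}: for each $i\in\cus$, order the facilities so that $d_{i,j_1}\leq d_{i,j_2}\leq\cdots\leq d_{i,j_m}$ and set $x_{i,j_k}:=\min\{y_{j_k},\,1-\sum_{l<k}x_{i,j_l}\}$. Since $\sum_{j}y_j=p\geq 1$ and $y_j\in[0,1]$, there is a unique smallest index $k^*$ at which the cumulative cap saturates, so the construction yields $\sum_{j'}x_{ij'}=1$ and $0\leq x_{ij}\leq y_j$ by design, giving \eqref{pc1:sumx} and \eqref{pc1:xy} for free.

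The crux is then \eqref{pc1:sumdx}. Here I would instantiate \eqref{eq:cuts} at $j:=j_{k^*}$ and algebraically rearrange its right-hand side to
\[
d_{i,j_{k^*}}\Bigl(1-\sum_{l<k^*}y_{j_l}\Bigr)+\sum_{l<k^*}d_{i,j_l}y_{j_l},
\]
which by the greedy definition of $x$ is exactly $\sum_{j'\in\loc}d_{ij'}x_{ij'}$; the constraint \eqref{eq:cuts} then delivers $\sum_{j'}d_{ij'}x_{ij'}\leq z$, as required. The one subtlety, which I expect to be the main obstacle, is handling ties in the distances: the sum in \eqref{eq:cuts} is restricted to $j'$ with $d_{ij'}<d_{i,j_{k^*}}$, whereas the rearranged expression naturally sums over all $l<k^*$. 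This mismatch is benign, since any tied index contributes a coefficient $d_{i,j_{k^*}}-d_{i,j_l}=0$ and can be added back without changing the value, but it needs an explicit remark so the identification of the two expressions is rigorous.
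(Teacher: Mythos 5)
Your proposal is correct and follows essentially the same route as the paper's proof: the forward inclusion uses the same chain of bounds via \eqref{pc1:xy}, \eqref{pc1:sumx} and \eqref{pc1:sumdx}, and the reverse inclusion constructs the same kind of ``fill up the closest facilities until cumulative $y$-weight $1$'' assignment and then invokes \eqref{eq:cuts} at the critical facility $j_i$. The only (immaterial) difference is tie handling: the paper splits the residual weight proportionally among all facilities at distance exactly $d_{ij_i}$, while your greedy order fills tied facilities sequentially, and your explicit remark that tied indices enter \eqref{eq:cuts} with coefficient zero correctly closes that gap.
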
	

\begin{proof}
We start by showing that any point in the projection of $\PPCaLP$ is in 
$\PPCbLP$. Towards that end let $(x^\circ, y^\circ,\obj^\circ)$ be in 
$\PPCaLP$. Clearly $(y^\circ,\obj^\circ)$ fulfills~\eqref{pc2:sumy} due to 
\eqref{pc1:sumy} and 
$0 \leq y^\circ_j \leq 1$ for all $j \in \loc$ by construction, so we only have 
to show that it fulfills~\eqref{eq:cuts}.

Consider some $i \in \cus$ and $j \in \loc$. 
Because of~\eqref{pc1:xy} for the right-hand 
side of~\eqref{eq:cuts} clearly 
$$d_{ij} - \sum_{j' \colon   d_{ij'} < d_{ij}} (d_{ij} - d_{ij'})y^\circ_{j'}
\leq 
d_{ij} - \sum_{j' \colon   d_{ij'} < d_{ij}} (d_{ij} - d_{ij'})x^\circ_{ij'}
=
d_{ij} \left(1 -  \sum_{j' \colon   d_{ij'} < d_{ij}}x^\circ_{ij'} \right) + 
\sum_{j' 
\colon   
d_{ij'} < d_{ij}} d_{ij'}x^\circ_{ij'}$$
holds
and because of~\eqref{pc1:sumx} this equals
$$
d_{ij}\left(\sum_{j' \colon   d_{ij'} \geq d_{ij}}x^\circ_{ij'} \right) + 
\sum_{j' 
\colon   d_{ij'} 
< d_{ij}} d_{ij'}x^\circ_{ij'}
\leq
\sum_{j' \in \loc} d_{ij'}x^\circ_{ij'}$$
which is at most $\obj^\circ$ because of~\eqref{pc1:sumdx}. Thus, the 
constraints~\eqref{eq:cuts} are fulfilled for any $i$ and $j$ and therefore 
$(y^\circ,\obj^\circ)$ is in $\PPCbLP$.

What is left to show is that any point in $\PPCbLP$ is also in the projection 
of $\PPCaLP$. 
Let $(y^*,z^*) \in \mathbb R^{|\loc|+1}$ be in $\PPCbLP$. We start by 
constructing $x^* \in \mathbb R^{|\cus|\cdot|\loc|}$ and then show that 
$(x^*, y^*, z^*) \in \PPCaLP$.

For every $i \in \cus$, let 
$j_i \in \arg \min_{j\in \loc} 
\{d_{ij}:
\sum_{j' \colon d_{ij'} < d_{ij}}y^*_{j'} < 1
\text { and }
\sum_{j' \colon d_{ij'} \leq d_{ij}}y^*_{j'} \geq 1
\},$
i.e., $j_i$ is an index such that 
all $y^*$ strictly closer than $d_{ij_i}$ to customer $i$ have weight less than 
1, and 
all $y^*$ with distance at most $d_{ij_i}$ to customer $i$ have weight at least 
1. In other words $y^*$ reaches weight $1$ at 
distance~$d_{ij_i}$ for customer $i$. Then for every $j \in \loc$ we set
$$
x_{ij}^*= \left\{
\begin{array}{ll}
y^*_j 
&  \text{ if } d_{ij} < d_{ij_i}, \\
y^*_j\left(1 - \sum_{j' \colon d_{ij'} < d_{ij_i}}y^*_{j'} \right)
/ \left( \sum_{j' \colon d_{ij'} = d_{ij_i}}y^*_{j'} \right) 
& \text{ if } d_{ij} = d_{ij_i}, \\
0 &  \text{ if } d_{ij} > d_{ij_i}.\\
\end{array} 
\right.
$$

Next we prove that $(x^*, y^*, z^*)$ is in $\PPCaLP$. 
Clearly $0\leq y^*_j\leq 1$ for all $j \in \loc$ holds 
and~\eqref{pc1:sumy} is satisfied because of~\eqref{pc2:sumy}.  
Furthermore $x^*_{ij}\geq 0$ 
for all $i \in \cus$ and $j \in \loc$
holds by 
construction. Moreover  
$$
\sum_{j \in \loc} x^*_{ij}
= 
\sum_{j \colon d_{ij} < d_{ij_i}}y^*_{j}
+
\left(1 - \sum_{j' \colon d_{ij'} < d_{ij_i}}y^*_{j'} \right)
/ \left( \sum_{j' \colon d_{ij'} = d_{ij_i}}y^*_{j'} \right)
\sum_{j \colon d_{ij} = d_{ij_i}}y^*_{j}
=1, 
$$
so~\eqref{pc1:sumx} holds.
In addition to that, due to the fact that 
$\sum_{j' \colon d_{ij'} \leq d_{ij_i}}y^*_{j'} \geq 1$ holds, we have that 
$x^*_{ij} \leq y^*_{j}$, hence~\eqref{pc1:xy} is fulfilled.
Eventually we consider the last constraint~\eqref{pc1:sumdx}. For a fixed $i$, 
due to~\eqref{pc1:sumx} we have 
$$
\sum_{j \in \loc} d_{ij} x^*_{ij}
=
d_{ij_i} \left( 1 - \sum_{j \in \loc} x^*_{ij} \right)
+
\sum_{j \in \loc} d_{ij} x^*_{ij}
=
d_{ij_i} - \sum_{j \in \loc}(d_{ij_i} - d_{ij})x^*_{ij}. 
$$
As a consequence of $x^*_{ij} = 0$ for $d_{ij} > d_{ij_i}$ and 
$x^*_{ij} = y^*_{j}$ for $d_{ij} < d_{ij_i}$, 
this is the same as
$$
d_{ij_i} - \sum_{j \colon d_{ij} < d_{ij_i}} (d_{ij_i} - d_{ij'})x^*_{ij'}
=
d_{ij_i} - \sum_{j \colon d_{ij} < d_{ij_i}} (d_{ij_i} - d_{ij'})y^*_{j'}, 
$$
which is at most $z^*$ due to~\eqref{eq:cuts} for $i$ and $j_i$. Hence we have 
shown that $\sum_{j \in \loc} d_{ij} x^*_{ij} \leq z^*$, so~\eqref{pc1:sumdx} 
holds and therefore $(x^*, y^*, z^*)$ is in $\PPCaLP$.
\end{proof}

Thus, Theorem~\ref{th:projection} confirms that the feasible region of the 
LP-relaxation of \myref{PC2} is in fact a projection of the feasible region of 
the LP-relaxation of \myref{PC1}. 
The following corollary is a simple consequence.

\begin{corollary} \label{cor:LPsAreTheSame}
	The optimal objective function values of the LP-relaxations of 
	\myref{PC1} and 
	\myref{PC2} coincide.
	Furthermore any optimal $(y^*,z^*)$ of the LP-relaxation of \myref{PC2} is 
	optimal for the LP-relaxation of \myref{PC1} 
	 and vice versa.
\end{corollary}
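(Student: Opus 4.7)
The plan is to derive the corollary directly from Theorem~\ref{th:projection} together with the observation that the objective function of both LP-relaxations is simply $z$, which is a coordinate of the projected space. Since projecting a set onto a coordinate subspace that contains the objective variable does not change the set of attainable objective values, the two LPs must share their optimal values.

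More concretely, I would first argue the equality of the optimal values. By Theorem~\ref{th:projection}, $\PPCbLP$ is exactly the image of $\PPCaLP$ under the map $(x,y,z)\mapsto (y,z)$. Therefore $\{z : (y,z)\in\PPCbLP\} = \{z : \exists x,y~\text{with}~(x,y,z)\in\PPCaLP\}$, and taking the minimum over either set yields the same value. Hence the LP-relaxations of \myref{PC1} and \myref{PC2} have equal optimal objective values.

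For the second claim I would chase the projection in both directions. If $(y^*,z^*)$ is optimal for the LP-relaxation of \myref{PC2}, then Theorem~\ref{th:projection} provides some $x^*$ with $(x^*,y^*,z^*)\in\PPCaLP$. This point is feasible for the LP-relaxation of \myref{PC1} with objective value $z^*$, which equals the optimal value of that LP by the first part, so $(x^*,y^*,z^*)$ is optimal. Conversely, if $(x^*,y^*,z^*)$ is optimal for the LP-relaxation of \myref{PC1}, then Theorem~\ref{th:projection} immediately gives $(y^*,z^*)\in\PPCbLP$, again with objective value matching the common LP optimum, so $(y^*,z^*)$ is optimal for the LP-relaxation of \myref{PC2}.

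There is no real obstacle in this argument; the only point requiring care is to make the trivial but essential observation that the objective $z$ is preserved by the projection, which is what allows optimality to transfer in both directions without any further analysis of the constructions used in the proof of Theorem~\ref{th:projection}.
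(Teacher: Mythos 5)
Your proposal is correct and follows exactly the route the paper intends: the paper states Corollary~\ref{cor:LPsAreTheSame} as an immediate consequence of Theorem~\ref{th:projection}, and your argument simply spells out why -- the projection preserves the objective coordinate $z$, so attainable objective values (and hence optima and optimal solutions, lifted or projected via the theorem) coincide. Nothing is missing; this is the same approach, made explicit.
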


As a consequence of Corollary~\ref{cor:LPsAreTheSame}, the LP-relaxations 
of \myref{PC1} and \myref{PC2} give the same bound for the \PCP.

\subsection{Reference to Benders decomposition \label{sec:benders}}

We now present an alternative approach on how \myref{PC2} can be obtained 
from \myref{PC1} as Benders decomposition. Towards this end let 
$\mytag{PC1-Rx}$ be formulation 
\myref{PC1} with relaxed  $x$  
variables, 
i.e., \myref{PC1-Rx} is \myref{PC1} without~\eqref{pc1:xbin} and with 
the constraint  
$x_{ij}\geq 0$ for all $i \in \cus$ and for all $j \in \loc$.

\begin{observation}\label{obs:relaxation}
	For any feasible solution $(x,y,z)$ of \myref{PC1-Rx}, a binary 
	$\tilde 
	x$ can be constructed such that 
	$(\tilde x,y,z)$ is a feasible solution of 
	\myref{PC1}. Note that both solutions have the same 
	objective function value $z$.
\end{observation}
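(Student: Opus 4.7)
The plan is to construct $\tilde x$ by assigning each customer to its closest open facility, and then verify that the resulting triple $(\tilde x, y, z)$ satisfies all constraints of \myref{PC1} with the same objective value. Crucially, in \myref{PC1-Rx} the $y$ variables remain binary (only the $x$ variables are relaxed), so ``open facilities'' is well-defined as $\{j \in \loc : y_j = 1\}$, and by \eqref{pc1:sumy} there are exactly $p$ of them.

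Concretely, for each $i \in \cus$ I would pick $j_i^* \in \arg\min\{d_{ij} : j \in \loc, y_j = 1\}$ (any tie-breaking rule), and set $\tilde x_{ij_i^*} = 1$ and $\tilde x_{ij} = 0$ for $j \neq j_i^*$. This clearly gives a binary $\tilde x$, so \eqref{pc1:xbin} holds, and $\sum_{j \in \loc} \tilde x_{ij} = 1$, so \eqref{pc1:sumx} holds. For \eqref{pc1:xy}, the only nonzero entry is $\tilde x_{ij_i^*} = 1 = y_{j_i^*}$, and $\tilde x_{ij} = 0 \leq y_j$ otherwise.

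The one step that requires a small argument is constraint \eqref{pc1:sumdx}, i.e.\ $\sum_{j \in \loc} d_{ij} \tilde x_{ij} = d_{ij_i^*} \leq z$. This follows because in the original feasible solution $(x, y, z)$ of \myref{PC1-Rx}, constraint \eqref{pc1:xy} together with $y_j \in \{0,1\}$ forces $x_{ij} = 0$ whenever $y_j = 0$, so $x_{ij}$ is supported only on open facilities. Combining $\sum_{j \in \loc} x_{ij} = 1$ from \eqref{pc1:sumx} with $d_{ij} \geq d_{ij_i^*}$ on the support of $x_{i\cdot}$, I get
\[
z \;\geq\; \sum_{j \in \loc} d_{ij} x_{ij} \;\geq\; d_{ij_i^*} \sum_{j \in \loc} x_{ij} \;=\; d_{ij_i^*},
\]
as required. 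Finally, the objective value $z$ is unchanged since only $x$ is modified and the objective \eqref{pc1:z} depends solely on $z$.

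I do not expect a real obstacle here; the observation is essentially a rounding argument. The only mild subtlety is making sure $j_i^*$ is well-defined, which is guaranteed because \eqref{pc1:sumx} and \eqref{pc1:xy} together force at least one open facility to cover each customer in any feasible solution of \myref{PC1-Rx}.
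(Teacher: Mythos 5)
Your proposal is correct and follows essentially the same route as the paper: assign each customer to its nearest open facility, check \eqref{pc1:sumx}--\eqref{pc1:xbin} directly, and verify \eqref{pc1:sumdx} by using that $x_{ij}=0$ on closed (hence on strictly closer) facilities so that $d_{ij_i^*}=d_{ij_i^*}\sum_j x_{ij}\leq \sum_j d_{ij}x_{ij}\leq z$. No gaps; the argument matches the paper's proof up to minor rephrasing.
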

\begin{proof}
Let $(x,y,z)$ be a feasible solution of \myref{PC1-Rx}. We construct 
$\tilde x$ in the following way. 
For any~$i \in \cus$, 
we choose $j_i\in \arg\min_{j \in 
	\loc} \{ d_{ij}: y_j=1 \}$ and define
$\tilde x_{ij_i} = 1$ and $\tilde x_{ij} = 0$ for all 
$j \in \loc\setminus \{j_i\}$. Clearly this $\tilde x$ is binary. 

What is left to show is that $(\tilde x,y,z)$ is feasible for 
\myref{PC1}. \blue{Clearly}~\eqref{pc1:sumy} and~\eqref{pc1:ybin} hold because 
$(x,y,z)$ is feasible for \myref{PC1-Rx}.
Moreover it is easy to see that~\eqref{pc1:sumx},~\eqref{pc1:xy} 
and~\eqref{pc1:xbin} 
are fulfilled by construction. What is left so show is 
that~\eqref{pc1:sumdx} is satisfied. 
For any~$i \in \cus$  we have by construction
$\sum_{j \in \loc} d_{ij} \tilde x_{ij} 
= d_{ij_i} \tilde x_{ij'}
=  d_{ij_i}$
and, because $(x,y,z)$ is feasible for \myref{PC1-Rx}, 
$d_{ij_i}$  
equals
$ d_{ij_i}\sum_{j \in \loc}  x_{ij}
=
\sum_{j \in \loc} d_{ij_i} x_{ij}$. 
Due to the fact that 
$x_{ij} = 0$ whenever $d_{ij} < d_{ij_i}$ 
(because in this case $y_j = 0$), this is less or equal to
$\sum_{j \in \loc} d_{ij} x_{ij}$. 
This is less or equal to $z$, because 
$(x,y,z)$ is feasible for 
\myref{PC1-Rx}. Thus~\eqref{pc1:sumdx} is fulfilled and 	
$(\tilde x,y,z)$ is feasible for \myref{PC1}.
\end{proof}	

As a consequence of Observation~\ref{obs:relaxation}, the \PCP can be solved by 
using formulation \myref{PC1-Rx} instead of \myref{PC1}. Our new formulation 
\myref{PC2} can be seen as being 
obtained by applying Benders decomposition to \myref{PC1-Rx} to project out the 
$x$-variables. Similar to the Benders decomposition reformulation for the 
uncapacitated facility location problem (UFL) presented in 
\citet{fischetti2017redesigning}, the obtained formulation i) is compact, i.e., 
it has a polynomial number of variables and constraints, and ii) has a 
combinatorial interpretation, that we have already presented. 
The main difference between the Benders decomposition for the UFL and our new 
formulation for the \PCP is that in the UFL, there is a variable $z_i \geq 0$ 
for every customer  
$i \in \cus$ to measure the cost for each customer $i$, and the variables 
are then summed in the objective function, while in our case we have just a 
single $z$ as the \PCP has the objective function to minimize the maximum 
distance. We use this connection for an efficient separation of inequalities 
\eqref{eq:cuts} (resp., a lifted variant of them) in our branch-and-cut 
algorithm. Details of the separation are given in Section \ref{sec:sep}.
For more background on 
Benders decomposition we refer to e.g., Chapter 8 of the book 
\citep{conforti2014integer} and the surveys 
\citep{costa2005survey,rahmaniani2017benders}.

Note that 
Observation~\ref{obs:relaxation} and Theorem~\ref{th:projection} 
also directly imply 
the following corollary.

\begin{corollary}
	\label{cor:IPformulationsCoincide}
	The optimal objective function values of \myref{PC1} and 
	\myref{PC2} coincide.
	Furthermore any optimal~$y^*$ of \myref{PC1} is optimal for 
	\myref{PC2} and vice versa.
\end{corollary}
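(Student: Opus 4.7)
The plan is to chain \myref{PC1} through the auxiliary problem \myref{PC1-Rx} to \myref{PC2}, using Observation~\ref{obs:relaxation} for the first bridge and a binary-$y$ restriction of Theorem~\ref{th:projection} for the second. In each bridge I would show that the set of $(y,z)$-pairs attainable as feasible solutions is preserved; then, since both formulations minimize the same scalar $z$, both the optimal objective values and the sets of optimal $y^*$ must coincide.

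For the first bridge, note that any feasible $(x,y,z)$ of \myref{PC1} is trivially feasible for \myref{PC1-Rx} (the latter simply drops integrality on $x$), and Observation~\ref{obs:relaxation} gives the converse: from any feasible $(x,y,z)$ of \myref{PC1-Rx} it constructs a feasible $(\tilde x,y,z)$ of \myref{PC1} keeping $(y,z)$ unchanged. Hence the projections of the feasible regions of \myref{PC1} and \myref{PC1-Rx} onto the $(y,z)$-coordinates coincide. For the second bridge, I would apply Theorem~\ref{th:projection} restricted to $y \in \{0,1\}^{|\loc|}$: in the forward direction, any feasible $(x,y,z)$ of \myref{PC1-Rx} belongs to $\PPCaLP$, so $(y,z) \in \PPCbLP$ by the theorem and, with $y$ binary, it is feasible for \myref{PC2}; in the reverse direction, given $(y,z)$ feasible for \myref{PC2}, the construction of $x^*$ inside the proof of Theorem~\ref{th:projection} supplies $x^* \geq 0$ with $(x^*,y,z) \in \PPCaLP$, and since $y$ is untouched and \myref{PC1-Rx} only requires $x \geq 0$ (not $x$ binary), this $(x^*,y,z)$ is feasible for \myref{PC1-Rx}. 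Composing the two bridges, the set of $(y,z)$ feasible for \myref{PC1} equals the feasible region of \myref{PC2}, and the corollary follows.

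The main, and fairly mild, obstacle is verifying that Theorem~\ref{th:projection}, stated for the \emph{LP-relaxations}, can be invoked when $y$ is required to be binary. This is resolved by inspecting the construction of $x^*$ in that proof: it depends on $y^*$ only through the weights and never alters the $y$-component, and it yields a non-negative (not necessarily binary) $x^*$. This is exactly the regime of \myref{PC1-Rx}, after which Observation~\ref{obs:relaxation} can be used to pass back to a genuine \myref{PC1}-feasible solution with the same $(y,z)$. Everything else is bookkeeping about which projection equals which feasible region.
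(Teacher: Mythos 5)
Your proposal is correct and follows essentially the same route as the paper, which derives Corollary~\ref{cor:IPformulationsCoincide} directly from Observation~\ref{obs:relaxation} and Theorem~\ref{th:projection}; your write-up merely spells out the two bridges the paper leaves implicit. One small simplification: you need not inspect the construction of $x^*$ inside the proof of Theorem~\ref{th:projection}, since a binary $y$ already satisfies $0\leq y_j\leq 1$, so $(y,z)\in\PPCbLP$ and the theorem's statement itself supplies an $x\geq 0$ with $(x,y,z)\in\PPCaLP$, i.e.\ a feasible point of \myref{PC1-Rx}.
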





Thus, as a byproduct of this different Benders decomposition viewpoint,  
Corollary~\ref{cor:IPformulationsCoincide} implies that \myref{PC2} is indeed 
an 
IP formulation for the \PCP and allows us to exploit \myref{PC2} 
computationally.

\section{Lifting the new formulation}
\label{sec:lifting}
In this section we first present a method to lift the LP-relaxation of our 
new IP formulation \myref{PC2}
\blue{in Section~\ref{sec:exploitLB}, which allows us to utilize a known lower 
bound and obtain a lower bound that is at least as strong as this lower bound 
by solving an LP}. 
Then we discuss 
\blue{how to compute the best possible lower bound obtainable this way and give 
a combinatorial interpretation of it in terms of the set cover problem in 
Section~\ref{sec:bestLB}.
In Section~\ref{sec:comparisonLiterature} we show that our best possible lower 
bound coincides with the currently best lower 
bound known from the literature.
Finally, we show that only some inequalities of the LP are necessary for 
obtaining the best possible lower bound in 
Section~\ref{sec:notAllInequalitiesNecessary}.}
Aspects relevant for computations are discussed in 
Section~\ref{sec:implementation}.

\blue{
\subsection{Exploiting lifted optimality cuts}
\label{sec:exploitLB}
}

The key ingredient to our lifting is the following lemma that involves the 
lifted optimality cuts \eqref{eq:loptimality}.

\begin{lemma}
	\label{lem:validCuts}
	Let $LB \geq 0 $ be a lower bound on the optimal objective function value 
	of 
	\myref{PC2}. 
	Then for any 
	$i \in \cus$ and $j \in \loc$ the inequality
	\begin{equation} 
\obj \geq \max\{LB,d_{ij}\} -\sum_{j' \colon d_{ij'} < d_{ij}} 
\left(\max\{LB,d_{ij}\} - \max\{LB,d_{ij'}\}\right)y_{j'}	
\label{eq:loptimality}\tag{L-OPT}
	\end{equation}
	is valid for \myref{PC2}, i.e., every feasible solution of \myref{PC2} 
	fulfills 
	\eqref{eq:loptimality}.
\end{lemma}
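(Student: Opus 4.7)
The plan is to verify inequality \eqref{eq:loptimality} directly on any feasible solution $(y,z)$ of \myref{PC2}, by combining two facts about feasible solutions with a short case distinction based on the set of open facilities strictly closer to $i$ than $j$.

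First, I would establish the following two lower bounds on $z$ for any feasible $(y,z)$. Since $LB$ is a lower bound on the optimal objective function value of \myref{PC2} and since any feasible $z$ is at least the optimal value of the minimization problem, we immediately get $z \geq LB$. Secondly, let $S=\{j\in J:y_j=1\}$ and let $j^\star \in \arg\min_{j'\in S} d_{ij'}$ be a closest open facility to $i$; the already-known (unlifted) constraint \eqref{eq:cuts} applied for the index $j^\star$ forces $z \geq d_{ij^\star}$. Combining both gives $z \geq \max\{LB,d_{ij^\star}\}$, which will be the single quantity I compare the right-hand side of \eqref{eq:loptimality} against.

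The main work is then to show that the right-hand side of \eqref{eq:loptimality} is at most $\max\{LB,d_{ij^\star}\}$. I would split according to $T := \{j' \in S : d_{ij'} < d_{ij}\}$. If $T=\emptyset$, then $d_{ij^\star}\geq d_{ij}$, hence $\max\{LB,d_{ij^\star}\} \geq \max\{LB,d_{ij}\}$, and the right-hand side is just $\max\{LB,d_{ij}\}$, which settles this case. If $T\neq\emptyset$, then $j^\star\in T$. The crucial monotonicity observation is that for every $j'$ with $d_{ij'}<d_{ij}$ we have $\max\{LB,d_{ij'}\}\leq\max\{LB,d_{ij}\}$, so every coefficient $\max\{LB,d_{ij}\}-\max\{LB,d_{ij'}\}$ on $y_{j'}$ in \eqref{eq:loptimality} is nonnegative. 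Since the $y_{j'}$ with $j'\in T$ equal $1$, dropping all terms in the sum except the one for $j^\star$ can only increase the right-hand side, giving
\[
\max\{LB,d_{ij}\} - \bigl(\max\{LB,d_{ij}\}-\max\{LB,d_{ij^\star}\}\bigr) = \max\{LB,d_{ij^\star}\}
\]
as an upper bound, which is at most $z$ by the first step.

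The only potential pitfall I see is the monotonicity step $\max\{LB,d_{ij'}\}\leq\max\{LB,d_{ij}\}$ for $d_{ij'}<d_{ij}$; without this, the coefficients of $y_{j'}$ in \eqref{eq:loptimality} might be negative and the "drop other terms" argument would fail. Once this is noted, the proof reduces to the two cases above and requires no real computation.
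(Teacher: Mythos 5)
Your proof is correct, but it takes a different route than the paper. You verify \eqref{eq:loptimality} directly: from $z\geq LB$ and from \eqref{eq:cuts} applied to the closest open facility $j^\star$ you get $z\geq\max\{LB,d_{ij^\star}\}$, and then a case distinction on whether any open facility is strictly closer than $j$ (together with the nonnegativity of the coefficients $\max\{LB,d_{ij}\}-\max\{LB,d_{ij'}\}$, which you rightly flag as the crucial monotonicity) bounds the right-hand side of \eqref{eq:loptimality} by $\max\{LB,d_{ij^\star}\}$. The paper instead argues by reinterpretation: since $LB\leq z$, any feasible $(y,z)$ of \myref{PC2} remains feasible for the modified \PCP instance in which every distance $d_{ij}$ is replaced by $\max\{LB,d_{ij}\}$, and \eqref{eq:loptimality} is (up to zero-coefficient terms) exactly the base inequality \eqref{eq:cuts} written for that modified instance, so its validity follows from the already-established correctness of \eqref{eq:cuts}. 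The paper's argument is shorter and makes the ``lifting = raising all distances to at least $LB$'' intuition explicit, which is reused later (e.g., in the separation procedure); your argument is more elementary and self-contained, since it does not lean on the earlier combinatorial justification of \eqref{eq:cuts} for a modified instance but only on the constraints of \myref{PC2} themselves, at the modest cost of an explicit case analysis and the (trivially satisfied) observation that the set of open facilities is nonempty because $\sum_{j\in\loc}y_j=p\geq 1$.
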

\begin{proof}
	Let $i \in \cus$, $j \in \loc$ and let $(y,\obj)$ be a feasible solution of 
	\myref{PC2}. 
	Then $y$ is a feasible solution to the considered instance of 
	\PCP with objective function value $\obj$. In particular, every 
	customer 
	demand point $i \in \cus$ has at most distance $\obj$ to at least one of 
	the 
	locations 
	indicated in $y$. As $LB$ is a lower bound on the optimal objective 
	function value of \myref{PC2} we have $LB \leq \obj$. 
	As a consequence,  $(y,\obj)$ is 
	also a feasible solution of the slightly modified instance of \PCP
    where we replace 
	every distance $d_{ij}$ with $\max\{LB,d_{ij}\}$. 
	Clearly~\eqref{eq:loptimality} is nothing else than~\eqref{eq:cuts} for 
	this 
	slightly modified instance, 
	hence $(y,\obj)$ 
	fulfills~\eqref{eq:loptimality}
	because it is feasible for this slightly modified instance. 
	Therefore~\eqref{eq:loptimality} is valid.
\end{proof}

Now we can use the valid inequalities~\eqref{eq:loptimality} to find a stronger 
LP-relaxation of \myref{PC2} by replacing~\eqref{eq:cuts} 
by~\eqref{eq:loptimality} in the following way.

\begin{theorem}
	\label{thm:newcutsgiveLB}
	Let $LB\geq 0$ be a lower bound on the optimal objective function value of 
	\myref{PC2}. Then 
\begin{subequations}	
\begin{alignat}{3}
\mytag{PCLB} \quad
\mathcal{L}(LB) = & \min & \obj  \label{eq:PCLBobj}  \\ 
& \st~ &  \sum_{j \in \loc} y_j &= p  \label{eq:PCLBsumy}\\       
&&  \obj &\geq d_{ij} -\sum_{j' \colon d_{ij'} < d_{ij}} 
\left(d_{ij} - \max\{LB,d_{ij'}\}\right)y_{j'} \qquad
&& \forall i \in \cus, \forall j \in \loc : d_{ij} > LB   
\label{eq:PCLBcutsLB}\\ 
&& \obj &\geq LB \label{eq:PCLBobjLB}\\ 
&& 0 &\leq y_{j} \leq 1  && \forall j \in \loc \label{eq:PCLBboundsy} \\
&& \obj & \in \mathbb{R}
\label{eq:PCLBz}
\end{alignat}
\end{subequations}
	is an LP-relaxation of \myref{PC2}.
	In particular, $\mathcal{L}(LB)$
	is a lower bound on the optimal objective function value of \myref{PC2}
	with $\mathcal{L}(LB) \geq LB$.
\end{theorem}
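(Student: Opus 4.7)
My plan is to derive both conclusions (that $\mathcal{L}(LB)$ lower-bounds the optimum of \myref{PC2} and that $\mathcal{L}(LB)\geq LB$) directly from Lemma~\ref{lem:validCuts} combined with one elementary case analysis on whether $d_{ij}>LB$ or $d_{ij}\leq LB$. The overall strategy is to show that every feasible solution of \myref{PC2} remains feasible for \myref{PCLB} (so the LP optimum is a lower bound) and then to read off $\mathcal{L}(LB)\geq LB$ from a single constraint of the formulation.

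First I would take an arbitrary feasible $(y,\obj)$ of \myref{PC2} and check the easy constraints. Constraint \eqref{eq:PCLBsumy} is identical to \eqref{pc2:sumy}, and the bounds \eqref{eq:PCLBboundsy} follow from $y_j\in\{0,1\}$. For \eqref{eq:PCLBobjLB}, note that since $(y,\obj)$ is feasible for the minimization problem \myref{PC2}, its value $\obj$ is at least the optimum of \myref{PC2}, which by assumption on $LB$ is at least $LB$; hence $\obj\geq LB$.

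The substantive step is verifying \eqref{eq:PCLBcutsLB}. Here I would match these inequalities against the lifted optimality cuts \eqref{eq:loptimality} from Lemma~\ref{lem:validCuts}. Whenever $d_{ij}>LB$ one has $\max\{LB,d_{ij}\}=d_{ij}$, so the right-hand side of \eqref{eq:loptimality} becomes exactly the right-hand side of \eqref{eq:PCLBcutsLB}. Since \eqref{eq:loptimality} is valid for \myref{PC2} by Lemma~\ref{lem:validCuts}, so is \eqref{eq:PCLBcutsLB}. This completes the verification that $(y,\obj)$ is feasible for \myref{PCLB}; therefore $\mathcal{L}(LB)$ is a valid lower bound on the optimum of \myref{PC2}, and $\mathcal{L}(LB)\geq LB$ follows immediately from the presence of \eqref{eq:PCLBobjLB} in the formulation.

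As a sanity check on the formulation (not strictly required by the statement), I would observe that the cases $d_{ij}\leq LB$ of \eqref{eq:loptimality} are not missing but merely absorbed into \eqref{eq:PCLBobjLB}: for such $(i,j)$, every $j'$ with $d_{ij'}<d_{ij}\leq LB$ satisfies $\max\{LB,d_{ij'}\}=\max\{LB,d_{ij}\}=LB$, so \eqref{eq:loptimality} collapses to $\obj\geq LB$. Thus \myref{PCLB} is exactly the $(y,\obj)$-relaxation of \myref{PC2} obtained by replacing the family \eqref{eq:cuts} with the lifted family \eqref{eq:loptimality}. I do not expect any real obstacle here; the only trap is to try to reprove the validity of the lifted inequalities from scratch rather than invoking Lemma~\ref{lem:validCuts}, which already does that work.
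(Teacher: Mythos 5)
Your proposal is correct and follows essentially the same route as the paper's proof: both rest on Lemma~\ref{lem:validCuts} for the validity of the lifted cuts \eqref{eq:loptimality} and on the same case analysis showing that for $d_{ij}>LB$ these cuts coincide with \eqref{eq:PCLBcutsLB} while for $d_{ij}\leq LB$ they collapse to \eqref{eq:PCLBobjLB}, with $\mathcal{L}(LB)\geq LB$ read off from that constraint. Your framing as a direct feasibility check of any \myref{PC2}-solution in \myref{PCLB} is just a minor rephrasing of the paper's ``relax the integrality, add the valid cuts, drop the superseded inequalities \eqref{eq:cuts}'' argument.
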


\begin{proof}
	Obviously an LP-relaxation of \myref{PC2} is obtained by relaxing the 
	binary 
	constraints on $y$. Due to Lemma~\ref{lem:validCuts} this 
	LP-relaxation can be strengthen  by adding cuts of the 
	form~\eqref{eq:loptimality}. 
	Let us consider some $i \in \cus$ and $j \in \loc$. If $d_{ij} \leq LB$ 
	then 
	\eqref{eq:loptimality} simplifies to~\eqref{eq:PCLBobjLB}, 
	and if $d_{ij} > LB$ then~\eqref{eq:loptimality} is equivalent to
	\eqref{eq:PCLBcutsLB}. Furthermore we can disregard the 
	inequalities~\eqref{eq:cuts} because they are superseded 
	by~\eqref{eq:loptimality}. 
\end{proof}

In a nutshell, Theorem~\ref{thm:newcutsgiveLB} allows us to start with a 
lower bound $LB$ on \myref{PC2} and solve the LP \myref{PCLB} with $|\loc|+1$ 
variables and a 
maximum of $|\cus|\cdot|\loc|+\blue{2}|\loc|+2$ constraints, in order to obtain 
a new 
lower bound 
$\mathcal{L}(LB)$ that is as least as good as $LB$.
Our aim is to utilize Theorem~\ref{thm:newcutsgiveLB} in the following way: We 
start with a lower bound $LB$ on the optimal objective 
function value of 
\myref{PC2}, for example the minimum of all distance 
	values of $D$, so $LB = \min \{d: d\in D\}$, and then iteratively improve 
	this lower bound by solving 
\myref{PCLB} in order to obtain the new lower bound $\mathcal{L}(LB)$ and use 
this bound as $LB$ in the next iteration. 
We give a formal definition of this procedure 
\blue{at the end of Section~\ref{sec:bestLB}.}

\blue{
\subsection{Our best possible lower bound}\label{sec:bestLB}
}

\blue{The aim of this section is to give a combinatorial interpretation of the 
best possible lower bound that we can reach with this procedure, i.e., to 
consider its convergence. Furthermore, we detail how to compute this best 
possible lower bound.
To this end}
we start with the following lemmata.

\begin{lemma}
	\label{lem:NoViolCutsImplySum}
	Let $LB\geq 0$ be a lower bound on the optimal objective function value of 
	\myref{PC2} such that $\mathcal{L}(LB) = LB$ and let 
	$(y^\ast,\obj^\ast=LB)$ 
	be an 
	optimal solution of \myref{PCLB}. Then
	\begin{align*}
	\sum_{j \colon d_{ij} \leq LB}y^\ast_{j} \geq 1
	\end{align*}
	holds for all $i \in \cus$. 
\end{lemma}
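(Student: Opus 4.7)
The plan is to argue by contradiction. Suppose that there exists some $i \in \cus$ for which $\sum_{j : d_{ij} \leq LB} y^*_j < 1$. First I would handle a trivial edge case: if every $j \in \loc$ satisfies $d_{ij} \leq LB$, then the sum in question equals $\sum_{j \in \loc} y^*_j = p \geq 1$ by \eqref{eq:PCLBsumy}, so such an $i$ cannot exist. Hence in the offending case there must exist at least one $j \in \loc$ with $d_{ij} > LB$.

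The key step is to pick the right inequality \eqref{eq:PCLBcutsLB} to instantiate. I would choose $j^* \in \arg\min\{d_{ij} : j \in \loc,\ d_{ij} > LB\}$, because then by the minimality of $d_{ij^*}$ the set $\{j' : d_{ij'} < d_{ij^*}\}$ coincides exactly with $\{j' : d_{ij'} \leq LB\}$. On this set, $\max\{LB, d_{ij'}\} = LB$, so the cut \eqref{eq:PCLBcutsLB} specialised to the pair $(i, j^*)$ collapses to
\[
z \;\geq\; d_{ij^*} - (d_{ij^*} - LB)\sum_{j' : d_{ij'} \leq LB} y_{j'}.
\]

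Plugging the feasible point $(y^*, z^* = LB)$ into this inequality and using the assumption $\sum_{j' : d_{ij'} \leq LB} y^*_{j'} =: s < 1$, the right-hand side rewrites as $LB + (d_{ij^*} - LB)(1 - s)$, which is strictly greater than $LB$ since $d_{ij^*} > LB$ and $1 - s > 0$. This contradicts $z^* = LB$ and concludes the proof.

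I do not foresee a genuinely hard step here; the whole argument turns on the observation that, for the \emph{smallest} distance strictly exceeding $LB$, the corresponding lifted cut simplifies to a clean linear inequality in the $y_{j'}$ with $d_{ij'} \leq LB$. The only thing to be a bit careful about is the degenerate case in which no distance from $i$ exceeds $LB$, which is dispatched immediately by \eqref{eq:PCLBsumy}.
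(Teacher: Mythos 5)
Your proof is correct and follows essentially the same route as the paper: both pick $j^*$ (the paper's $j_i$) as the closest location with $d_{ij}>LB$, observe that the corresponding constraint \eqref{eq:PCLBcutsLB} collapses because $\max\{LB,d_{ij'}\}=LB$ on $\{j':d_{ij'}<d_{ij^*}\}=\{j':d_{ij'}\leq LB\}$, and read off the covering inequality; the paper does this as a direct rearrangement while you phrase it as a contradiction, which is an immaterial difference. Your explicit handling of the degenerate case where all $d_{ij}\leq LB$ (via \eqref{eq:PCLBsumy}) is a small point the paper glosses over, and is a welcome bit of extra care.
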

\begin{proof}
	For any $i \in \cus$  let $j_i \in \arg \min_{j\in \loc} \{d_{ij}: 
	d_{ij} > 
	LB 
	\}$, i.e., $j_i$ is the closest facility location to the 
	customer demand point $i$ that has 
	distance 
	greater than $LB$. Then~\eqref{eq:PCLBcutsLB} for $i$ and $j=j_i$ yields 
	that
	\begin{align*}
	LB &\geq d_{i j_i} -\sum_{j' \colon 
	d_{ij'} < d_{ij_i}} 
	\left(d_{i j_i} - \max\{LB,d_{ij'}\}\right)y^\ast_{j'}
	\end{align*}
	holds. By the choice of $j_i$ we have $\max\{LB,d_{ij'}\} = LB$ for 
	all $j'$, so this implies
	\begin{align*}
	\left(d_{i j_i} - LB\right)
	\sum_{j' \colon 
			d_{ij'} < d_{ij_i} 
		} y^\ast_{j'} 
	&\geq  d_{i j_i} - LB  
	\end{align*}
	and together with the fact that 
	$d_{ij'} < d_{ij_i}$ 
is equivalent to $d_{ij'} \leq LB $ for all $j'$   
 this yields the desired result
	\begin{align*}
\sum_{j' \colon 
		d_{ij'} 
		\leq LB} y^\ast_{j'} 
&\geq  1.
\end{align*}	
\end{proof}

\begin{lemma}
	\label{lem:sumImpliesNoViolCuts}
	Let $LB\geq 0$ be a lower bound on the optimal objective function value of 
	\myref{PC2}, $y$ such that $0 \leq y \leq 1$ and $\obj = LB$.
	 If
	\begin{align}
	\label{eq:SumyGeq1}
	\sum_{j' \colon d_{ij'} \leq LB}y_{j'} \geq 1
	\end{align}
	holds for some $i \in \cus$, then the 
	inequalities~\eqref{eq:PCLBcutsLB} hold 
	for $(y,\obj)$  
	for this $i$  and all $j \in \loc$.
\end{lemma}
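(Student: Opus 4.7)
The plan is to show the claim by direct algebraic manipulation after a case distinction on $j \in \loc$. For any $j$ with $d_{ij} \leq LB$, inequality~\eqref{eq:PCLBcutsLB} is not stated in~\myref{PCLB} (those cases collapse into~\eqref{eq:PCLBobjLB}), so there is nothing to check. Hence I fix an arbitrary $j \in \loc$ with $d_{ij} > LB$ and proceed to verify~\eqref{eq:PCLBcutsLB} for the pair $(i,j)$.

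Substituting $\obj = LB$ into~\eqref{eq:PCLBcutsLB} and rearranging, the inequality to be proved becomes
\begin{equation*}
\sum_{j' \colon d_{ij'} < d_{ij}} \bigl(d_{ij} - \max\{LB,d_{ij'}\}\bigr) y_{j'} \;\geq\; d_{ij} - LB.
\end{equation*}
The next step is to partition the index set $\{j' : d_{ij'} < d_{ij}\}$ into those $j'$ with $d_{ij'} \leq LB$ (where $\max\{LB,d_{ij'}\} = LB$, giving coefficient $d_{ij}-LB > 0$) and those $j'$ with $LB < d_{ij'} < d_{ij}$ (where $\max\{LB,d_{ij'}\} = d_{ij'}$, giving a nonnegative coefficient). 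Since $y \geq 0$, the second group of terms is nonnegative and can be discarded as a lower bound, leaving
\begin{equation*}
\sum_{j' \colon d_{ij'} < d_{ij}} \bigl(d_{ij} - \max\{LB,d_{ij'}\}\bigr) y_{j'} \;\geq\; (d_{ij}-LB) \sum_{j' \colon d_{ij'} \leq LB} y_{j'},
\end{equation*}
where I also used that $d_{ij'} \leq LB$ implies $d_{ij'} < d_{ij}$, since $d_{ij} > LB$ by assumption.

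Finally I invoke hypothesis~\eqref{eq:SumyGeq1} to conclude that the right-hand side is at least $d_{ij} - LB$, which establishes the desired inequality. The main (and only) obstacle is bookkeeping of the case split on the $\max$ and the careful observation that the set $\{j' : d_{ij'} \leq LB\}$ sits inside $\{j' : d_{ij'} < d_{ij}\}$; once that is in place the proof is a one-line chain of inequalities. This lemma will then serve, together with Lemma~\ref{lem:NoViolCutsImplySum}, as the converse half of a characterization of the fixed points of the iteration $LB \mapsto \mathcal{L}(LB)$ in terms of a fractional set-cover condition.
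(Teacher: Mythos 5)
Your proof is correct and follows essentially the same argument as the paper: restrict to $j$ with $d_{ij}>LB$, use that $\{j': d_{ij'}\leq LB\}\subseteq\{j': d_{ij'}<d_{ij}\}$ with all coefficients $d_{ij}-\max\{LB,d_{ij'}\}$ nonnegative and $y\geq 0$ to drop the terms with $LB<d_{ij'}<d_{ij}$, and then apply~\eqref{eq:SumyGeq1} with the factored-out coefficient $d_{ij}-LB$. The only difference is cosmetic (you rearrange the inequality rather than bounding the right-hand side directly), so no further comment is needed.
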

\begin{proof}
	Let $i \in \cus$ such that~\eqref{eq:SumyGeq1}	
	holds. Consider an arbitrary but fixed $j \in \loc$ with $d_{ij} > LB$.
	Then clearly
	\begin{align*}
	d_{ij} -&\sum_{j' \colon d_{ij'} <  d_{ij}} 
	\left(d_{ij} - \max\{LB,d_{ij'}\}\right)y_{j'} 
	 \leq 
	 d_{ij} -\sum_{j' \colon  d_{ij'} \leq LB} 
	\left(d_{ij} - \max\{LB,d_{ij'}\}\right)y_{j'} \\
	& = 
	 d_{ij} -\left(d_{ij} - LB\right) \sum_{j' \colon 
	 	d_{ij'} \leq 
	 LB} 
	y_{j'} 
	\leq d_{ij} -\left(d_{ij} - LB\right) = LB = \obj 
	\end{align*}
	holds, and hence~\eqref{eq:PCLBcutsLB} is satisfied for $(y,\obj)$.
\end{proof}

Lemma~\ref{lem:NoViolCutsImplySum} and Lemma~\ref{lem:sumImpliesNoViolCuts} are 
the key ingredients of the following theorem.

\begin{theorem}
	\label{thm:convergence}
		Let $LB\geq 0$ be a lower bound on the optimal objective function value 
		of 
	\myref{PC2}. Then $\mathcal{L}(LB) = LB$ holds if and only if 
	there is a fractional set cover solution with radius 
	$LB$ that uses at most $p$ sets, i.e., if and 
	only if there 
	is a feasible solution $y^\ast$ for 
	\begin{subequations}
	\begin{alignat}{3}
	    &\min &  \sum_{j \in \loc} y_j \phantom{iiiii}&\label{eq:sco} \\       
		&\st~&\sum_{j \colon  d_{ij}  \leq LB}y_{j} &\geq 1 &&
		\forall i \in \cus \label{eq:sc1} \\
		&& 0 &\leq y_{j} \leq 1 \qquad && \forall j \in \loc \label{eq:sc2}
	\end{alignat}
	\end{subequations}
	with objective function value at most $p$.
\end{theorem}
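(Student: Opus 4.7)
The plan is to prove the two directions separately, invoking Lemma~\ref{lem:NoViolCutsImplySum} for the forward implication and Lemma~\ref{lem:sumImpliesNoViolCuts} for the backward one. The statement is really a bridge between an analytic property of the lifted LP (it cannot push the bound above $LB$) and a combinatorial property (a fractional $p$-cover at radius $LB$ exists), and the two lemmata already package the hard work in each direction.

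For the forward direction, suppose $\mathcal{L}(LB) = LB$ and pick any optimal $(y^\ast, z^\ast)$ of \myref{PCLB}; by optimality $z^\ast = LB$. Lemma~\ref{lem:NoViolCutsImplySum} immediately gives $\sum_{j\colon d_{ij}\leq LB} y^\ast_j \geq 1$ for every $i \in \cus$, so $y^\ast$ satisfies \eqref{eq:sc1}. Since $y^\ast$ also fulfils \eqref{eq:PCLBboundsy} and \eqref{eq:PCLBsumy}, it is feasible for \eqref{eq:sco}--\eqref{eq:sc2} with objective value exactly $p$, which is at most $p$.

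For the backward direction, assume there is $y^\ast$ satisfying \eqref{eq:sc1} and \eqref{eq:sc2} with $\sum_{j\in\loc} y^\ast_j \leq p$. I first extend $y^\ast$ to a vector $\tilde y$ with $\sum_{j\in\loc} \tilde y_j = p$ and $0 \leq \tilde y_j \leq 1$; this is possible because $m \geq p$ leaves at least $p - \sum_j y^\ast_j$ units of free capacity among the upper bounds $y_j \leq 1$, so I simply distribute the missing mass $p - \sum_j y^\ast_j$ arbitrarily among coordinates that have not yet reached $1$. Since $\tilde y \geq y^\ast$ coordinatewise, the covering inequality \eqref{eq:sc1} continues to hold for $\tilde y$. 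Now set $\tilde z := LB$; then \eqref{eq:PCLBsumy}, \eqref{eq:PCLBobjLB} and \eqref{eq:PCLBboundsy} are satisfied by construction, and Lemma~\ref{lem:sumImpliesNoViolCuts} applied to each $i \in \cus$ guarantees that \eqref{eq:PCLBcutsLB} holds as well. Hence $(\tilde y, LB)$ is feasible for \myref{PCLB}, so $\mathcal{L}(LB) \leq LB$; combined with $\mathcal{L}(LB) \geq LB$ from Theorem~\ref{thm:newcutsgiveLB} this yields $\mathcal{L}(LB) = LB$.

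The only non-mechanical step is the extension from a fractional cover of total weight at most $p$ to one of weight exactly $p$, and I expect this to be the conceptual pitfall one should flag explicitly. It relies crucially on the standing assumption $m \geq p$ and on the fact that the set cover LP constraints \eqref{eq:sc1} are monotone in $y$, so adding more mass can never destroy feasibility; without this observation the backward implication would require a slightly more careful argument. With this step justified, both directions assemble cleanly from the two preparatory lemmata.
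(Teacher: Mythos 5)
Your proof is correct and follows essentially the same route as the paper: the forward direction via Lemma~\ref{lem:NoViolCutsImplySum} and the backward direction via Lemma~\ref{lem:sumImpliesNoViolCuts} combined with padding the fractional cover up to total weight $p$ (which the paper also does, though you justify the padding step more explicitly by invoking $m \geq p$ and the monotonicity of the covering constraints, and by applying the lemma to the padded vector $\tilde y$ rather than to $y^\ast$). No gaps; this matches the paper's argument.
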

\begin{proof}
	Assume $\mathcal{L}(LB) = LB$ holds, then by 
	Lemma~\ref{lem:NoViolCutsImplySum} an optimal solution 
	$(y^\ast,\obj^\ast=LB)$ of~\myref{PCLB} fulfills~\eqref{eq:sc1}, because 
	of~\eqref{eq:PCLBboundsy} it 
	satisfies~\eqref{eq:sc2} and due to~\eqref{eq:PCLBsumy} the optimal 
	objective function value~\eqref{eq:sco} is at most~$p$.
	
	Now let $\obj^\ast=LB$ and assume $y^\ast$ is feasible 
	for~\eqref{eq:sc1} 
	and~\eqref{eq:sc2} and has objective function value~\eqref{eq:sco} at most 
	$p$. Then due 
	to Lemma~\ref{lem:sumImpliesNoViolCuts} all 
	inequalities~\eqref{eq:PCLBcutsLB} are fulfilled for $(y^\ast,\obj^\ast)$. 
	It is easy to see that we can 
	obtain $(y^{\ast\ast},\obj^{\ast\ast}=\obj^\ast)$ by 
	arbitrarily choosing some $y^\ast_j$ in $y^\ast$ and increase them 
	such 
	that~\eqref{eq:PCLBsumy} and~\eqref{eq:PCLBboundsy} hold and all 
	other constraints of~\myref{PCLB} are still satisfied. Therefore 
	$\mathcal{L}(LB) 
	\leq \obj^{\ast\ast} = LB$ holds because 
	$(y^{\ast\ast},\obj^{\ast\ast})$ is feasible 
	for~\myref{PCLB}. As a result $\mathcal{L}(LB) = LB$, because  
	$\mathcal{L}(LB) \geq LB$ holds by Theorem~\ref{thm:newcutsgiveLB}.
\end{proof}

Theorem~\ref{thm:convergence} implies that if there is a fractional set cover 
solution with radius 
$LB$ that uses at most $p$ sets, then the new bound we obtain by solving 
\myref{PCLB} will not be better than $LB$. Also the following corollary is an 
easy 
consequence of Theorem~\ref{thm:convergence}.

\begin{corollary}
	\label{cor:noImprovementOfLowerBound}
	Let $LB\geq 0$ be a lower bound on the optimal objective function value 
	of 
	\myref{PC2} with $\mathcal{L}(LB) = LB$. Then $\mathcal{L}(LB') = LB'$ 
	holds for all $LB' \geq LB$.
\end{corollary}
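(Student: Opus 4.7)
The plan is to reduce the corollary to a monotonicity observation on the fractional set cover characterization provided by Theorem~\ref{thm:convergence}. Since that theorem gives a two-sided characterization of the equality $\mathcal{L}(LB) = LB$ in terms of the existence of a feasible $y^\ast$ for \eqref{eq:sc1}--\eqref{eq:sc2} with objective value at most $p$, the whole argument amounts to transferring such a $y^\ast$ from radius $LB$ to the larger radius $LB'$.

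First, I would invoke the ``only if'' direction of Theorem~\ref{thm:convergence} for the bound $LB$: the hypothesis $\mathcal{L}(LB) = LB$ yields some $y^\ast \in [0,1]^{|\loc|}$ with $\sum_{j \in \loc} y^\ast_j \leq p$ and $\sum_{j \colon d_{ij} \leq LB} y^\ast_j \geq 1$ for every $i \in \cus$.

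Next I would use the key monotonicity observation: for any $LB' \geq LB$ and any $i \in \cus$, one has $\{j \in \loc : d_{ij} \leq LB\} \subseteq \{j \in \loc : d_{ij} \leq LB'\}$, so
\[
\sum_{j \colon d_{ij} \leq LB'} y^\ast_j \;\geq\; \sum_{j \colon d_{ij} \leq LB} y^\ast_j \;\geq\; 1.
\]
Therefore the same $y^\ast$ is feasible for \eqref{eq:sc1}--\eqref{eq:sc2} at radius $LB'$ and still has objective value at most $p$. Applying the ``if'' direction of Theorem~\ref{thm:convergence} to $LB'$ (which is a valid lower bound on the optimum of \myref{PC2} by hypothesis) then delivers $\mathcal{L}(LB') = LB'$.

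There is no real obstacle in this proof, as it is a one-line monotonicity argument dressed up by two applications of Theorem~\ref{thm:convergence}. The only subtle point worth remarking is that Theorem~\ref{thm:convergence} is stated for lower bounds on the optimum, so one must ensure that $LB'$ itself qualifies as such a bound; this is part of the hypothesis in the statement of the corollary, so nothing further is required.
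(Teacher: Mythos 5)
Your proposal is correct and takes essentially the same route as the paper: the paper's proof likewise extracts a fractional set cover solution of radius $LB$ from Theorem~\ref{thm:convergence}, observes that it remains a valid cover for any radius $LB' \geq LB$ by monotonicity of the constraints~\eqref{eq:sc1}, and applies Theorem~\ref{thm:convergence} again to conclude $\mathcal{L}(LB') = LB'$. Your closing remark about $LB'$ needing to be a lower bound is handled no more carefully in the paper (its statement and proof also do not verify this for $LB'$), so your write-up matches the paper's argument in both substance and level of rigor.
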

\begin{proof}
	By Theorem~\ref{thm:convergence} there is a fractional set cover solution 
	with radius $LB$ that uses at most $p$ sets for \myref{PC2}. Then for any 
	$LB' \geq LB$ it follows that there is also a fractional set cover solution 
	with radius $LB'$ that uses at most $p$ sets for \myref{PC2}, so by 
	Theorem~\ref{thm:convergence} it holds that $\mathcal{L}(LB') = LB'$.
\end{proof}

Corollary~\ref{cor:noImprovementOfLowerBound} implies that whenever a lower 
bound $LB$ does not yield a better bound by solving \myref{PCLB}, this is also 
true for all values larger than $LB$. This gives rise to the following 
definition.

\begin{definition}
	\label{def:LBGS}
	Let $\LBGS = \min \{LB \in \mathbb{R}: \mathcal{L}(LB) = LB\}$.
\end{definition}

Due to Corollary~\ref{cor:noImprovementOfLowerBound} $\LBGS$ is not only the 
smallest lower bound, for which solving \myref{PCLB} does not improve the lower 
bound, but $\LBGS$  is also the supremum over all lower bounds, for which 
solving \myref{PCLB} improves the bound. Next we reduce the possible values for 
$\LBGS$. 

\begin{lemma}
	\label{lem:LBGSinD}
	It holds that $\LBGS \in D$.
\end{lemma}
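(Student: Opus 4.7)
The plan is to show that the set $S = \{LB \in \mathbb{R} : \mathcal{L}(LB) = LB\}$ is a union of intervals whose left endpoints all lie in $D$, so that its minimum $\LBGS$ must itself belong to $D$.

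First I would sort the distinct distances as $d_1 < d_2 < \dots < d_K$ and adopt the convention $d_{K+1} = \infty$. The key observation is that for any $k \in \{1,\dots,K\}$ and any $LB \in [d_k, d_{k+1})$, and for every pair $(i,j)$, the condition $d_{ij} \leq LB$ is equivalent to $d_{ij} \leq d_k$. Consequently, the set cover system \eqref{eq:sc1}--\eqref{eq:sc2} whose feasibility (with objective at most $p$) characterizes $\mathcal{L}(LB) = LB$ via Theorem~\ref{thm:convergence} is literally the same LP for every $LB$ in the half-open interval $[d_k, d_{k+1})$. Hence the property ``$\mathcal{L}(LB) = LB$'' is constant on each such interval, i.e., $S \cap [d_k, d_{k+1})$ is either the whole interval or empty.

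Next I would combine this with Corollary~\ref{cor:noImprovementOfLowerBound}, which says $S$ is upward closed (any $LB' \geq LB \in S$ is also in $S$). Together with the interval-constancy just established, $S$ must be of the form $[d_{k^\ast}, \infty)$ for some index $k^\ast$. Taking the minimum yields $\LBGS = d_{k^\ast} \in D$, as desired.

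The main obstacle is a minor one: ruling out that $\LBGS$ could fall in the interval below $d_1$ (where the set cover LP is infeasible, so $\mathcal{L}(LB) = LB$ fails) and handling the case $LB \geq d_K$ cleanly (where the trivial solution $y_j = 1/|J|$ satisfies every constraint, so this tail is guaranteed to lie in $S$ and the minimum indeed exists). Both cases follow directly from the combinatorial reformulation of Theorem~\ref{thm:convergence}, so the argument reduces essentially to the piecewise-constancy observation above.
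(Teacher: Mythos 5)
Your proposal is correct and follows essentially the same route as the paper: both arguments rest on the observation that, via Theorem~\ref{thm:convergence}, the condition $\mathcal{L}(LB)=LB$ depends only on which distances satisfy $d_{ij}\leq LB$, so it is constant on each interval $[d_k,d_{k+1})$ and the minimum $\LBGS$ must coincide with some $d_k\in D$. Your additional use of Corollary~\ref{cor:noImprovementOfLowerBound} and the explicit treatment of the tails below $d_1$ and above $d_K$ are fine (and slightly more careful than the paper), but they do not change the substance of the argument.
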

\begin{proof}
It is easy to see that for any $k \in \{1, \dots, K-1\}$ the 
constraint~\eqref{eq:sc1} is the same for all values of $LB$ with 
$d_{k} \leq LB < d_{k+1}$, because whether a variable $y_j$ is present in the 
sum only depends on whether its distance is less or equal to $LB$, and $d_k$ 
and $d_{k+1}$ are two consecutive distances of all distances. 
As a consequence, the optimal objective function value of \myref{PCLB} is the 
same for all $LB$ with $d_{k} \leq LB < d_{k+1}$. This together with 
Theorem~\ref{thm:convergence} implies that whether $\mathcal{L}(LB) = LB$ holds 
is the same for all $LB$ with $d_{k} \leq LB < d_{k+1}$. As $\LBGS$ is defined 
as the 
minimum value of $LB$ such that $\mathcal{L}(LB) = LB$, clearly $\LBGS$ 
coincides with $d_k$ for some $k \in \{1, \dots, K-1\}$ or with $d_{K}$, 
therefore $\LBGS \in D$ holds. 
\end{proof}

In order to compute $\LBGS$ we can use the following approach: 
We 
start with a lower bound $LB$ on the optimal objective 
function value of 
\myref{PC2}, for example the minimum of all distance 
	values in $D$, so $LB = d_1$.
Then we iteratively improve 
this lower bound by 
first solving 
\myref{PCLB} in order to obtain the new lower bound $\mathcal{L}(LB)$, 
and in a second step we set 
$LB = \min \{d_k \in D: d_k \geq \mathcal{L}(LB)\}$.  
As soon as we do not improve $LB$ in an iteration anymore, we set $\LBGS = LB$. 

It is straight forward to see that the output $\LBGS$ of this procedure  
gives $\LBGS$ as defined in Definition~\ref{def:LBGS}. 
Note that an 
infinite series of increases without convergence is not possible since in the 
procedure $LB$ only takes values of $D$ and there are at most 
$|\cus| \cdot 
|\loc|$ values in $D$.


\blue{
\subsection{Comparison of our best possible lower bound with the best lower 
bound from the 
literature}\label{sec:comparisonLiterature}
}

Next we want to compare our best bound $\LBGS$ to the currently best lower 
bound known from the literature $LB^*$, which was introduced by  
\citep{elloumi2004}.  
They obtain $LB^*$ by relaxing the integrality of the variables $y_j$ 
while keeping the integrality of the variables $u_k$ of 
their IP formulation \myref{PCE}, so  $LB^*$ is the optimal objective function 
value of the following MIP. 
\begin{alignat}{3}
\mytag{PCE-R} \qquad
& LB^* = \min  &\eqref{pcE:ojb} \nonumber \\ 
& \st~ & \eqref{pcE:sumyp}&-\eqref{pcE:ubin} \nonumber \\       
&& 0 \leq &y_{j} \leq  1 \qquad && \forall j \in \loc \nonumber
\end{alignat}

It turns out that our best bound $\LBGS$ and the best known bound from the 
literature $LB^*$ coincide.

\begin{theorem}
	$\LBGS = LB^*$  
\end{theorem}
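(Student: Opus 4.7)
The plan is to show that $LB^*$ admits the same combinatorial characterization as $\LBGS$. Combining Theorem~\ref{thm:convergence}, Definition~\ref{def:LBGS}, and Lemma~\ref{lem:LBGSinD} yields
\[
\LBGS = \min\{d_k \in D : \text{the LP~\eqref{eq:sco}--\eqref{eq:sc2} with $LB = d_k$ has optimum at most $p$}\},
\]
so it suffices to derive the same formula for $LB^*$.

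The key structural fact for \myref{PCE-R} is that an optimal solution can always be chosen with the binary $u$-vector having a \emph{prefix} pattern: there exists an index $k^* \in \{1,\dots,K\}$ with $u_k = 1$ for $2 \leq k \leq k^*$ and $u_k = 0$ for $k > k^*$. Indeed, the objective coefficients $d_k - d_{k-1}$ in~\eqref{pcE:ojb} are non-negative, so flipping $u_k$ from $1$ to $0$ never worsens the objective provided constraint~\eqref{pcE:sumyu} remains satisfied; and since the sets $\{j \colon d_{ij} < d_k\}$ grow with $k$, if $u_k = 0$ is feasible for some index $k$ with a given $y$, then $u_{k'} = 0$ is feasible for every $k' \geq k$ with the same $y$. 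A standard exchange argument then produces an optimal solution of this shape.

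Under such a prefix pattern, the objective of \myref{PCE-R} telescopes to $d_1 + (d_{k^*} - d_1) = d_{k^*}$, with the convention $k^* = 1$ when all $u_k$ vanish. The only non-redundant coverage condition on $y$ in~\eqref{pcE:sumyu} becomes $\sum_{j \colon d_{ij} < d_{k^*+1}} y_j \geq 1$, equivalently $\sum_{j \colon d_{ij} \leq d_{k^*}} y_j \geq 1$, for every $i \in \cus$; combined with $\sum_j y_j \leq p$ from~\eqref{pcE:sumyp} and $0 \leq y_j \leq 1$ from the relaxation of~\eqref{pcE:ybin}. (Constraint~\eqref{pcE:sumy1} is implied by any active coverage constraint.) Hence $LB^*$ equals the smallest $d_{k^*} \in D$ for which this LP on $y$ is feasible with $\sum_j y_j \leq p$, which is exactly the expression for $\LBGS$ displayed above.

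The delicate step is the prefix-structure argument; once it is in place, the remainder is a direct translation between the feasibility condition derived for \myref{PCE-R} and the one appearing in Theorem~\ref{thm:convergence}. A brief check at the boundaries $k^* = 1$ (objective $d_1$, radius $d_1$) and $k^* = K$ (no coverage constraint is active and $LB^* = d_K$) confirms that $\LBGS = LB^*$ also holds in those edge cases.
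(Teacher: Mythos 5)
Your proof is correct and follows essentially the same route as the paper: both arguments reduce $\LBGS$ and $LB^*$ to the same fractional set-cover characterization via Theorem~\ref{thm:convergence} and Lemma~\ref{lem:LBGSinD}, and both rest on the prefix structure of the $u$-variables in an optimal solution of \myref{PCE-R}. The only difference is presentational: you prove the prefix structure by an explicit exchange argument and state one common closed-form characterization, whereas the paper establishes the two inequalities $\LBGS \geq LB^*$ and $LB^* \geq \LBGS$ separately and cites \citet{elloumi2004} for the prefix structure.
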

\begin{proof}
	First, we show that $\LBGS \geq LB^*$ holds. Clearly $\mathcal{L}(\LBGS) = 
	\LBGS$ holds, so by Theorem~\ref{thm:convergence}
	there is a fractional set cover solution $y^*$ with radius 
	$\LBGS$ that uses at most $p$ sets. Furthermore by Lemma~\ref{lem:LBGSinD} 
	we have that  
	$\LBGS \in 
	D$, so there is a $k^*$ such that $\LBGS = d_{k^*}$. 
	We set $u^*_{k} = 1$ for all $k \leq k^*$ and 
	$u^*_{k} = 0$ for all $ k > k^*$. 
	It is easy to see that $(u^*, y^*)$ is a feasible solution of 
	\myref{PCE-R}, whose objective function value equals $d_{k^*} = \LBGS$, 
	thus $\LBGS \geq LB^*$ holds.
	
	Next, we show that $LB^* \geq \LBGS$ holds. Towards this end let 
	$(u^\circ, y^\circ)$ be an optimal solution of \myref{PCE-R}.
	\citep{elloumi2004} already observed that due to the structure of the 
	problem there is an index $k^\circ \in \{2, \ldots, K\}$ such that 
	$u^\circ_{k} = 1$ for all $k \leq k^\circ$ and 
	$u^\circ_{k} = 0$  for all $ k > k^\circ$. Then 
	constraint~\eqref{pcE:sumyu} for $k=k^\circ+1$ implies that 
	$\sum_{j\colon d_{ij} < d_{k^\circ+1}} y^\circ_{j} \geq  1$ holds $\forall 
	i \in 
	\cus$, and hence also 
	$\sum_{j\colon d_{ij} \leq d_{k^\circ}} y^\circ_{j} \geq  1$ holds 
	$\forall i \in \cus$. As a consequence, $y^\circ$ is a fractional set cover 
	solution with radius $d_{k^\circ}$ that uses at most $p$ sets. 
	Furthermore $d_{k^\circ}$ coincides with $LB^*$, because $LB^*$ 
	is the optimal objective function value of \myref{PCE-R} and the optimal 
	solution $(u^\circ, y^\circ)$ has objective function value 
	$d_{k^\circ}$. 
	Thus, $y^\circ$ is a fractional set cover 
	solution with radius $LB^*$ that uses at most $p$ sets, so by 
	Theorem~\ref{thm:convergence} it holds that
	$\mathcal{L}(LB^*) = LB^*$, therefore  $LB^* \geq \LBGS$.
\end{proof}

\blue{
\subsection{Inequalities needed for obtaining our best possible lower bound}
\label{sec:notAllInequalitiesNecessary}
}

Now we want to draw our attention to the question of whether all 
inequalities~\eqref{eq:PCLBcutsLB} are necessary for the convergence result. In 
fact, the following holds. 
\begin{theorem}
	\label{thm:newcutsgiveLBLessInequalities}
	Let $LB\geq 0$ be a lower bound on the optimal objective function value of 
	\myref{PC2}. For any $i \in \cus$ let $j_i \in \arg \min_{j\in \loc} 
	\{d_{ij}: 
	d_{ij} > LB \}$, i.e., $j_i$ is the closest facility location to 
	the customer demand point~$i$ that has 
	distance more than $LB$. 
	Then
	\begin{alignat}{3}
	\mathcal{L'}(LB) = & \min & \eqref{eq:PCLBobj} 
	\nonumber \\ 
	& \st~ &  
	\obj &\geq d_{ij_i} -\sum_{j' \colon d_{ij'} <  d_{ij_i}} 
	\left(d_{ij_i} - \max\{LB,d_{ij'}\}\right)y_{j'} \qquad
	&& 	\forall i \in \cus 
	\label{eq:PCLBcutsLBLessInequalities}
	\\ 
	&&\eqref{eq:PCLBsumy} &, \eqref{eq:PCLBobjLB} - \eqref{eq:PCLBz}
	\nonumber
	\end{alignat}
	is an LP-relaxation of \myref{PC2}.
	In particular, $\mathcal{L'}(LB)$ gives a lower bound on the optimal 
	objective function value of \myref{PC2}
	with $\mathcal{L}(LB)  \geq \mathcal{L'}(LB)  \geq LB$.
	Furthermore $\mathcal{L'}(LB) = LB$ holds if and only if 
	there is a fractional set cover solution with radius 
	$LB$ that uses at most $p$ sets.
\end{theorem}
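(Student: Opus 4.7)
The plan is to show the three parts of the theorem in order, each by a short argument that recycles the machinery already developed for $\mathcal{L}(LB)$.

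First, to see that $\mathcal{L'}(LB)$ is an LP-relaxation of \myref{PC2}, I would observe that the constraint in~\eqref{eq:PCLBcutsLBLessInequalities} is exactly~\eqref{eq:PCLBcutsLB} specialized to the pair $(i,j_i)$ (which is legal since $d_{ij_i}>LB$ by the choice of $j_i$), and~\eqref{eq:PCLBcutsLB} in turn is an algebraic rewriting of the lifted optimality cut~\eqref{eq:loptimality}, whose validity is Lemma~\ref{lem:validCuts}. All remaining constraints are either the $y$-box constraints or constraints already present in \myref{PC2}, so the LP is a relaxation. This immediately yields $\mathcal{L'}(LB) \geq LB$ (also directly enforced by~\eqref{eq:PCLBobjLB}). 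The inequality $\mathcal{L}(LB) \geq \mathcal{L'}(LB)$ is purely structural: the feasible region of $\mathcal{L'}(LB)$ is obtained from that of \myref{PCLB} by dropping the constraints~\eqref{eq:PCLBcutsLB} for all pairs $(i,j)$ with $j \neq j_i$, so the feasible region can only get larger.

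For the equivalence with the existence of a fractional set cover of radius $LB$ using at most $p$ sets, I would mirror the proof of Theorem~\ref{thm:convergence}. For the \emph{if} direction, take a feasible $y^*$ for \eqref{eq:sc1}--\eqref{eq:sc2} with objective at most $p$. Apply Lemma~\ref{lem:sumImpliesNoViolCuts} to $(y^*, z^* = LB)$: all inequalities~\eqref{eq:PCLBcutsLB} are then satisfied, so in particular the subset~\eqref{eq:PCLBcutsLBLessInequalities} is satisfied. Using the same rounding-up trick as in the proof of Theorem~\ref{thm:convergence}, one can scale $y^*$ up to meet~\eqref{eq:PCLBsumy} and~\eqref{eq:PCLBboundsy} without violating the other constraints (adding mass to some $y_j$ only tightens the right-hand sides in a favourable direction), yielding a feasible point of value $LB$, hence $\mathcal{L'}(LB) \le LB$.

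For the \emph{only if} direction, the key observation—and what makes the theorem useful—is that the proof of Lemma~\ref{lem:NoViolCutsImplySum} only invokes one constraint~\eqref{eq:PCLBcutsLB}, namely the one for the pair $(i, j_i)$ with $j_i$ exactly as defined here. That single constraint is present in $\mathcal{L'}(LB)$ as~\eqref{eq:PCLBcutsLBLessInequalities}, so if $\mathcal{L'}(LB)=LB$ and $(y^\ast, z^\ast = LB)$ is optimal, the same derivation yields $\sum_{j \colon d_{ij} \le LB} y^\ast_j \geq 1$ for every $i \in \cus$; combined with~\eqref{eq:PCLBsumy} and~\eqref{eq:PCLBboundsy}, $y^\ast$ is a fractional set cover of radius $LB$ using at most $p$ sets.

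The main obstacle I anticipate is not any calculation but rather the justification that Lemma~\ref{lem:NoViolCutsImplySum} really does carry over with only the restricted family of constraints; I would either argue this by inspection of the earlier proof or simply reproduce the one-line derivation (plug $j = j_i$ into~\eqref{eq:PCLBcutsLBLessInequalities}, use $\max\{LB, d_{ij'}\} = LB$ for the relevant $j'$, and divide by $d_{ij_i}-LB > 0$) to keep the present proof self-contained.
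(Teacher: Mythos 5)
Your proposal is correct and follows essentially the same route as the paper, which proves this theorem by pointing to the structural inclusion of feasible regions and by invoking analogues of Lemma~\ref{lem:NoViolCutsImplySum} and Lemma~\ref{lem:sumImpliesNoViolCuts} as in Theorem~\ref{thm:convergence}. Your added observation that the proof of Lemma~\ref{lem:NoViolCutsImplySum} uses only the constraint for the pair $(i,j_i)$ is exactly the point that makes the restricted family \eqref{eq:PCLBcutsLBLessInequalities} sufficient, so nothing is missing.
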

\begin{proof}
	Clearly $\mathcal{L'}(LB)$ is obtained from $\mathcal{L}(LB)$ by removing 
	some inequalities, so $\mathcal{L}(LB)  \geq \mathcal{L'}(LB)$ holds. 
	Analogous arguments as the ones in the proof of 
	Theorem~\ref{thm:newcutsgiveLB} prove that $\mathcal{L'}(LB)$ is a lower 
	bound on the optimal  objective function value of \myref{PC2}. 
	The proof of the remaining statements can be done analogously to the proof 
	of Theorem~\ref{thm:convergence}, i.e., with the help of analogous results 
	as Lemma~\ref{lem:NoViolCutsImplySum} and 
	Lemma~\ref{lem:sumImpliesNoViolCuts}.
\end{proof}

In other words Theorem~\ref{thm:newcutsgiveLBLessInequalities} implies that 
including only one of the  inequalities~\eqref{eq:PCLBcutsLB} for every 
customer 
demand point~$i$, namely~\eqref{eq:PCLBcutsLBLessInequalities} for the closest 
facility location to $i$ 
that has distance more than $LB$, is enough to ensure convergence to a 
fractional set cover solution $\LBGS$. This reduces the number of needed 
\blue{constraints} 
in $\mathcal{L'}(LB)$ to $|\cus|+\blue{2}|\loc|+\blue{2}$ instead of 
potentially 
$|\cus|\cdot|\loc| + \blue{2}|\loc| + \blue{2}$ 
\blue{constraints}  
needed in $\mathcal{L}(LB)$. However, it can be beneficial to 
include~\eqref{eq:PCLBcutsLB} for a customer demand point $i$ for several 
locations 
$j$ in order 
to improve the speed of convergence.

After deriving theoretical properties of our best bound $\LBGS$ and showing 
that it is as good as the best bound \blue{from the literature}, we next exploit  $\LBGS$ 
computationally.

\section{Implementation details \label{sec:implementation}}

We now present our solution algorithm to solve the \PCP based on formulation  
\myref{PC2}, where we replace~\eqref{eq:cuts} with the lifted optimality cuts 
\eqref{eq:loptimality}.  
In a nutshell, in our branch-and-cut algorithm 
we initialize the model with only a subset of inequalities 
\eqref{eq:loptimality} (i.e., only for a subset of customers $i\in \cus$ and 
potential facility locations $j\in \loc$) for a given valid lower bound $LB$. 
We 
then iteratively 
separate 
inequalities \eqref{eq:loptimality} while also updating the current lower bound 
$LB$ 
used in defining \eqref{eq:loptimality} by exploiting the objective value of 
the current LP-relaxation. 
\blue{In the 
update of $LB$ we also exploit that in instances \blue{from the literature} the 
distances 
are integral.}

We have implemented different strategies for doing 
the overall separation scheme (e.g., which inequalities to add, \ldots), these 
strategies are detailed in Section \ref{sec:details}. Before we discuss these 
strategies, 
we describe how a violated inequality \eqref{eq:loptimality} 
can be separated efficiently in Section~\ref{sec:sep}. Finally, 
Section~\ref{sec:heur} describes a primal heuristic which is called during the 
\blue{\BC} and driven by the optimal solution of the current LP-relaxation.

Our solution algorithm was implemented in C++
and it is available online under 
\url{https://msinnl.github.io/pages/instancescodes.html}.
IBM ILOG 
CPLEX 12.10 with 
default settings was used as \blue{\BC} framework. 
We apply the described separation schemes in the 
\texttt{UserCutCallback} of CPLEX, which gets called for fractional optimal 
solutions 
of the LP-relaxations within the \blue{\BC} tree. In the 
\texttt{LazyConstraintCallback}, which gets called for optimal integer 
solutions, we 
simply add a single violated inequality \eqref{eq:loptimality} if any exits. 
This is done, as most of the integer solutions encountered 
are produced by internal CPLEX heuristics (and are not just integral 
LP-relaxations) and are very different to the current optimal solution of the 
LP-relaxations. As a consequence, adding multiple inequalities 
\eqref{eq:loptimality} for such solutions is usually not useful to help to 
improve the optimal objective value the LP-relaxation.


\subsection{Separation of a violated inequality
	\texorpdfstring{\eqref{eq:loptimality}}{(L-OPT)}}
\label{sec:sep}

Let $(y^*,z^*)$ be a (fractional) optimal solution of the LP-relaxation at a 
\blue{\BC} node (the LP-relaxation of \myref{PC2}, where we 
replace~\eqref{eq:cuts} with the lifted optimality cuts 
\eqref{eq:loptimality} for a subset of all customers $i \in \cus$ and potential 
facility locations $j \in \loc$), $LB$ a given lower bound on the objective 
value 
and $i \in 
\cus$ a given customer. In order to obtain an efficient separation procedure, 
we can leverage that the cuts for a fixed customer are similar to the Benders 
cuts for the uncapacitated facility location problem as discussed in Section 
\ref{sec:benders}. Thus we can use an adapted separation procedure of the one  
presented 
in \cite{fischetti2017redesigning} to find the most violated inequality, 
if there is a violated inequality for the customer $i \in \cus$ and the lower 
bound $LB$. 

The procedure works as follows: Let $d'_{ij}=d_{ij}$ if $d_{ij}>LB$ 
and $d'_{ij}=LB$ otherwise. Sort the facilities $j \in \loc$ in ascending order 
according to $d'_{ij}$. \blue{Note that we only need to consider facilities 
with $y^*_j>0$ for this sorting as the other facilities do not contribute to 
the potential violation.} In the following, we assume that the locations are 
ordered in this way, i.e., we have $d'_{i1}\leq \ldots \leq d'_{i|\loc|}$.
Let the \emph{critical location} $j_i$ be the index such that 
$\sum_{j=1}^{j_i-1}y^*_j < 1 \leq \sum_{j=1}^{\blue{j_i}}y^*_j$. 
The maximum violation (i.e., the largest right-hand side value for the current 
$y^*$)
is then given by
\begin{equation*}
 d_{ij_i} -\sum_{\substack{j\colon y^*_j = 1 \text{ and }  d_{ij} <  
		d_{ij_i}}} 
\left(d_{ij_i} - \max\{LB,d_{ij}\}\right)y^*_{j}
\end{equation*}
and the 
inequality 
\eqref{eq:loptimality} with maximum violation is
\begin{equation*}
\obj \geq d_{ij_i} -\sum_{j \colon d_{ij} <  d_{ij_i}} 
\left(d_{ij_i} - \max\{LB,d_{ij}\}\right)y_{j}.
\end{equation*}
For more details on why this procedure gives the largest right-hand side value  
we refer to \cite{fischetti2017redesigning}.

We note that this procedure allows us to calculate $j_i$ and hence the maximal 
violation for each customer~$i$ in an efficient way without the need of knowing 
all the distances $d_{ij}$. \blue{As mentioned above}, we only need to know the distances 
$d_{ij}$ for each $j \in \loc$ with $y^*_j>0$ in order to determine $j_i$, and 
the number of such $j$ is usually rather small compared to $|\loc|$. 
 This is important for large-scale instances, where 
the distance matrix cannot be stored in memory and the distances need to be 
computed on-the-fly when needed.
Note however, that for adding such an inequality to the model for a customer 
$i$, all the distances from $i$ to any $j$ need to be known.

\subsection{Details of the overall separation schemes}
\label{sec:details}

We have implemented two different separation schemes, which differ in the way 
the violated inequalities that are added at a separation round are selected. 
Note that due to the size of the encountered instances, always adding all the 
violated inequalities becomes prohibitive. Moreover, previously added violated 
inequalities \eqref{eq:loptimality} can also become redundant, because during 
the course 
of the algorithm, we iteratively obtain larger values for $LB$, and thus new 
inequalities \eqref{eq:loptimality} which are strictly stronger than previously 
added ones can be derived. Moreover, as detailed in Section~\ref{sec:sep}, the 
maximal violation 
of any cut of the form \eqref{eq:loptimality} can be calculated efficiently for 
each customer, 
while for adding an inequality, all the distances need to be known, which is 
computationally much more costly for large-scale instances. Hence, a carefully 
engineered separation scheme is needed in order to obtain good performance. 

\newcommand{\maxViolated}{\texttt{maxViolated}\xspace}
\newcommand{\fixedCustomer}{\texttt{fixedCustomer}\xspace}
\newcommand{\fixedCustomerOwnRoot}{\texttt{fixedCustomerOwnRoot}\xspace}

Our two schemes are denoted by \maxViolated\ and \fixedCustomer. In both 
schemes, we use the option \texttt{purgeable} of CPLEX when adding a violated 
inequality in the root-node of the \blue{\BC} tree. This setting allows 
CPLEX to automatically remove a previously violated inequality if CPLEX decides 
(by an internal mechanism) that this inequality is ``useless'' for the 
subsequent solution process. To add a violated inequality at any other 
node of the \blue{\BC} tree, we use the method \texttt{addLocal} of CPLEX, 
which adds the inequality not in a global 
fashion, but only for the current subtree of the \blue{\BC} tree. This 
allows us to use the lower bound of the subtree as $LB$ within the inequalities 
\eqref{eq:loptimality} instead of using the global lower 
bound\footnote{Unfortunately, CPLEX does not allow to use the 
\texttt{purgeable} option when using \texttt{addLocal}.}.


\paragraph{Separation scheme \maxViolated} In this scheme, we simply add the 
most violated 
inequality \eqref{eq:loptimality} for the 
\texttt{maxNumCutsRoot} (\texttt{maxNumCutsTree}) customers that have the 
largest maximum violation at each separation round. We use at most 
\texttt{maxNumSepRoot} 
(\texttt{maxNumSepTree}) separation rounds. If the lower bound value does 
not improve more than $\varepsilon=1e-5$ for \texttt{maxNoImprovements} 
separation rounds at a node in the \blue{\BC} tree (including the 
root-node), we stop the separation at this node. In this scheme, we do not add 
any 
inequalities \eqref{eq:loptimality} at initialization, and set $LB$ to zero 
\blue{(the best deducible lower bound without additional computational effort)}
in 
the beginning. Algorithm~\ref{alg:sepmaxviolated} summarizes the separation 
scheme which is implemented in the \texttt{UserCutCallback}.

\begin{algorithm}[h!tb]   
	\small
	\DontPrintSemicolon                 
	\caption{Separation scheme \maxViolated}
	\label{alg:sepmaxviolated}
	\KwIn{instance $(\cus=\loc,d,p)$ of the \PCP, LP-relaxation $(y^*,z^*)$ of 
	current \blue{\BC} node,  ID \texttt{nodeID} of current 
	\blue{\BC} node, ID \texttt{prevNodeID} of previous \blue{\BC} 
	node, counter \texttt{nodeIterations}, counter 
	\texttt{boundNotImprovedCount}, LP-relaxation bound $z^*_{prev}$ of the 
	previous iteration}

	\If{$\texttt{nodeID}=\texttt{prevNodeID}$}
	{	
		\If{$z^*-z^*_{prev}<\varepsilon$}
		{
			$\texttt{boundNotImprovedCount}\gets \texttt{boundNotImprovedCount}+1$\;
			\If{$\texttt{boundNotImprovedCount}=\texttt{maxNoImprovements}$}
			{
				\Return\;
			}
		}
	}
	\Else
	{
		$\texttt{boundNotImprovedCount}\gets 0$\;
		$\texttt{nodeIterations}\gets 0$\;
	}

	$\texttt{maxNumCuts} \gets \texttt{maxNumCutsRoot}$\;
	$\texttt{maxNumSep} \gets \texttt{maxNumSepRoot}$\;
	\If{$\texttt{nodeID}\neq \texttt{rootNode}$}
	{
			$\texttt{maxNumCuts} \gets \texttt{maxNumCutsTree}$\;
			$\texttt{maxNumSep} \gets \texttt{maxNumSepTree}$\;
	}
	\If{$\texttt{nodeIterations}>\texttt{maxNumSep}$}
	{
		\Return\;
	}

 	$z^*_{prev} \gets z^*$\;
	$LB \gets \lceil z^* \rceil$\;

	$\texttt{customerAndViolation} \gets \emptyset$ \;
	\For{$i \in \cus$}
	{
		calculate maximal \texttt{violation} of inequalities \eqref{eq:loptimality} for customer $i$ and given lower bound $LB$ as described in Section \ref{sec:sep}\;
		\If{$\texttt{violation}>0$}
		{
			$\texttt{customerAndViolation} \gets 
			\texttt{customerAndViolation}\xspace \cup 
			\{(i,\texttt{violation})\}$\;	
		}
	}
	sort $\texttt{customerAndViolation}$ in descending order according to 
	\texttt{violation}\;
	\For{$\ell \in \{ 1,\ldots, 
	\min\{\texttt{maxNumCuts},size(\texttt{customerAndViolation})\} \}$}
	{
		$i \gets$ \texttt{customer} from $\texttt{customerAndViolation}[\ell]$\;
		\If{$\texttt{nodeID}= \texttt{rootNode}$}
		{
			add the maximal violated inequalitiy \eqref{eq:loptimality} for customer $i$ and lower bound $LB$ with the option \texttt{purgeable} \;
		}
		\Else
		{
				add the maximal violated inequalitiy \eqref{eq:loptimality} for customer $i$ and lower bound $LB$ as locally valid cut with using \texttt{addLocal}\;
		}
	}
	
\end{algorithm}

\paragraph{Separation scheme \fixedCustomer}
 In this scheme, we restrict the 
separation to a subset $\hat \cus \subseteq \cus$ of the customers, which we 
then iteratively 
grow during the course of the algorithm. The idea behind this separation scheme 
is that for solving the problem to optimality, it can be enough to focus on a 
subset of the customers due to the min-max structure of the objective function. 
This is also used in the set-cover-based approaches 
\cite{chen2009,contardo2019scalable}, where the set cover problem is solved (to 
optimality) on only a subset of the customers, and then it is checked, if any 
of the not-considered customers would change the objective function value. If 
yes, the set cover problem is then adapted with new customers and resolved. 
\blue{Furthermore, note that in  
	this scheme, we sometimes use that $\cus=\loc$ as is customary in the 
	instances \blue{from the literature}. We mention whenever we use that 
	condition 
	\blue{and also detail how to proceed if $\cus \neq \loc$}. }

Our 
separation scheme tries to follow a similar avenue within our \blue{\BC} 
framework. 
In particular, we add the most violated 
inequality \eqref{eq:loptimality} for all customers in $\hat \cus\subseteq 
\cus$ in each 
separation round and use at most 
\texttt{maxNumSepRoot} 
(\texttt{maxNumSepTree}) separation rounds. If the lower bound value does 
not improve more than $\varepsilon=1e-5$ for \texttt{maxNoImprovements} 
separation rounds at a node in the \blue{\BC} tree (including the 
root-node), we stop the separation at this node. 

The procedure to initialize the set  $\hat \cus$ 
\blue{is given as follows.} 
We select a sample $\hat \cus$ of $p+1$ 
\blue{customers} using the 
following greedy algorithm: We randomly choose an initial customer $i \in 
\cus$ and add it to the empty set $\hat \cus$. We then iteratively grow $\hat 
\cus$ by adding 
the customer $i \in \cus \setminus \hat \cus$ which has the maximum distance 
to its closest customer in 
$\hat \cus$. This is done until $|\hat \cus|=p+1$.

\blue{This procedure to initialize the set  $\hat \cus$ exploits the assumption 
that $\cus=\loc$ by using the distances between pairs of 
customers, which are not necessary available if $\cus \neq \loc$.
In the case that $\cus \neq \loc$ and these distances are available, this 
procedure can be used in the same way. If $\cus \neq \loc$ and these distances 
are not available, 
then one can iteratively add to $\hat \cus$ the customer $i \in \cus \setminus 
\hat \cus$ 
which has 
the maximum value of $ \min_{\hat i \in \hat \cus} \min_{j \in J} \{ d_{i j} + 
d_{\hat i  j} \}$, 
or 
$|\hat \cus|$ can be initialized with $p+1$ random customers.}

As initial value for $LB$ we use 
\blue{$\min_{i \in \hat \cus, j \in \blue{J}: j\neq i} \{d_{ij} \}$}. 
This is a valid lower bound, since 
initially $|\hat \cus|=p+1$. Thus at least one 
\blue{customer} 
\blue{$i \in \blue{\hat\cus}$}
cannot be chosen as open facility 
\blue{(which is in any case only possible if $i \in \loc$)}
in a feasible solution, and so its 
minimum distance must be the one to another location 
\blue{$j\neq i$ with $j\in \loc$}. 
\blue{Note that in the case $\cus \neq \loc$ the condition $j \neq i$ can 
	become redundant.}
\blue{Furthermore observe that as in the scheme \maxViolated, we initialize 
$LB$ with the best 
deducible lower bound without additional computational
effort.}


The customers to grow $\hat \cus$ are selected as follows: At each separation 
iteration, take the customer $i \in \cus \setminus \hat \cus$ which induces the 
most violated inequality \eqref{eq:loptimality} (if any exist) and add it to 
$\hat \cus$. Moreover, if there are more than \texttt{maxNoImprovementsFixed} 
consecutive iterations, in which the lower bound did not improve, we increase 
$\hat \cus$ more 
aggressively by adding more 
customers. The selection of these additional customers is again driven by 
maximum violation of \eqref{eq:loptimality}, but in order to find a diverse set 
of customers does not 
consider all $i \in \cus \setminus \hat \cus$ as candidates, but a subset $\bar 
\cus \subseteq \cus \setminus \hat \cus$. To build $\bar \cus$, initially we 
set $\bar \cus=\cus \setminus \hat \cus$. 
\blue{Whenever
$d_{ij}\leq LB$ 
holds for some $j \in \cus = \loc$
for a customer 
$i$ we just added to $\hat \cus$,
we remove $j$ from $\bar \cus$.}
\blue{Note that in this case $j$ occurs in the maximum violated inequality 
\eqref{eq:loptimality} for customer~$i$.}
The idea behind this is that if $i$ and $j$ are close to 
each other, then the already added inequality for customer~$i$ may be quite 
similar to the inequality we would add for $j$, thus we prevent adding the 
inequality for $j$. 

\blue{Note that for constructing $\bar \cus$ we exploit that 
$\cus=\loc$, again by using the distances between pairs of customers. If these 
are available for $\cus \neq \loc$, our approach can still be used, only $j$ 
does not appear in the maximum violated inequality 
\eqref{eq:loptimality} for customer~$i$ in this case.
 If $\cus \neq \loc$ and these distances 
are not available, then
one could either delete all 
customers $\hat i$ for which there is a location $\hat j$ such that $d_{i \hat 
j} + d_{\hat i \hat j} \leq LB$, 
or omit the deletion of customers of $\bar \cus$ completely.
}

Algorithm~\ref{alg:sepfixedCustomer}, Part 1
(continued in Algorithm~\ref{alg:sepfixedCustomer2}, Part 2)  summarizes the 
separation 
scheme which is implemented in the \texttt{UserCutCallback}.

\begin{algorithm}[h!tb]  
	\small
	\DontPrintSemicolon                 
	\caption{(Part 1 of 2) Separation scheme \fixedCustomer}
	\label{alg:sepfixedCustomer}
	\KwIn{instance $(\cus=\loc,d,p)$ of the \PCP, LP-relaxation $(y^*,z^*)$ of 
	current \blue{\BC} node, ID \texttt{nodeID} of current 
	\blue{\BC} node, ID \texttt{prevNodeID} of previous \blue{\BC} 
	node, counter \texttt{nodeIterations}, counter 
	\texttt{boundNotImprovedCount}, counter 
	\texttt{boundNotImprovedCountFixed}, LP-relaxation bound $z^*_{prev}$ of 
	the previous iteration, current subset of customers $\hat I$}
	
	\If{$\texttt{nodeID}=\texttt{prevNodeID}$}
	{	
		\If{$z^*-z^*_{prev}<\varepsilon$}
		{
			$\texttt{boundNotImprovedCount}\gets \texttt{boundNotImprovedCount}+1$\;
			$\texttt{boundNotImprovedCountFixed}\gets \texttt{boundNotImprovedCountFixed}+1$\;
			\If{$\texttt{boundNotImprovedCount}=\texttt{maxNoImprovements}$}
			{
				\Return\;
			}
		}
	}
	\Else
	{
		$\texttt{boundNotImprovedCount}\gets 0$\;
		$\texttt{boundNotImprovedCountFixed}\gets 0$\;
		$\texttt{nodeIterations}\gets 0$\;
	}

	$\texttt{maxNumSep} \gets \texttt{maxNumSepRoot}$\;
	\If{$\texttt{nodeID}\neq \texttt{rootNode}$}
	{
		$\texttt{maxNumSep} \gets \texttt{maxNumSepTree}$\;
	}
	\If{$\texttt{nodeIterations}>\texttt{maxNumSep}$}
	{
		\Return\;
	}

	$z^*_{prev} \gets z^*$\;
	$LB \gets \lceil z^* \rceil$\;
	
	$\texttt{customerAndViolation} \gets \emptyset$ \;
	\For{$i \in \hat \cus$}
	{
		calculate maximal \texttt{violation} of inequalities \eqref{eq:loptimality} for customer $i$ and given lower bound $LB$ as described in Section \ref{sec:sep}\;
		\If{$\texttt{violation}>0$}
		{
			$\texttt{customerAndViolation} \gets 
			\texttt{customerAndViolation}\xspace \cup \{(i,\texttt{violation}) 
			\}$\;	
		}
	}
	sort $\texttt{customerAndViolation}$ in descending order according to  
	\texttt{violation}\;
	\For{$\ell \in \{ 1,\ldots, 
	size(\texttt{customerAndViolation})\}$}
	{
		$i \gets$ customer from $\texttt{customerAndViolation}[\ell]$\;
		\If{$\texttt{nodeID}= \texttt{rootNode}$}
		{
			add the maximal violated inequalitiy \eqref{eq:loptimality} for customer $i$ and lower bound $LB$ with the option \texttt{purgeable} \;
		}
		\Else
		{
			add the maximal violated inequalitiy \eqref{eq:loptimality} for customer $i$ and lower bound $LB$ as locally valid cut with using \texttt{addLocal}\;
		}
	}
	\nonl continued in Algorithm \ref{alg:sepfixedCustomer}, Part 2 of 2 \;

\end{algorithm}

\addtocounter{algocf}{-1}
\begin{algorithm}[h!tb]   
	\small
	\DontPrintSemicolon     
	\setcounter{AlgoLine}{29}            
	\caption{(Part 2 of 2) Separation scheme \fixedCustomer}
	\label{alg:sepfixedCustomer2}
	\nonl continued from Algorithm \ref{alg:sepfixedCustomer}, Part 1 of 2 \;
	
	$\texttt{customerAndViolationOutside} \gets \emptyset$ \;
	\For{$i \in \cus \setminus \hat \cus$}
	{
		calculate maximal \texttt{violation} of inequalities 
		\eqref{eq:loptimality} for customer $i$ and given lower bound $LB$ as 
		described in Section \ref{sec:sep}\;
		\If{$\texttt{violation}>0$}
		{
			$\texttt{customerAndViolationOutside} \gets 
			\texttt{customerAndViolationOutside}\xspace \cup 
			\{(i,\texttt{violation}) \}$\;	
		}
	}	
	sort $\texttt{customerAndViolationOutside}$ in descending order according 
	to the 
	violation\;
	\If{$\texttt{boundNotImprovedCountFixed}<\texttt{maxNoImprovementsFixed} $}
	{
		\If{size(\texttt{customerAndViolationOutside}) $\geq 1$ }
		{
			$\texttt{mostViolatedCustomer} \gets$ customer from 
			$\texttt{customerAndViolationOutside}[1]$\;
			$\hat \cus \gets \hat \cus \cup  \{\texttt{mostViolatedCustomer}\}$\;
			\If{$\texttt{nodeID}= \texttt{rootNode}$}
			{
				add the maximal violated inequalitiy \eqref{eq:loptimality} for 
				customer \texttt{mostViolatedCustomer} and lower bound $LB$ 
				with the option \texttt{purgeable} \;
			}
			\Else
			{
				add the maximal violated inequalitiy \eqref{eq:loptimality} for 
				customer \texttt{mostViolatedCustomer} and lower bound $LB$ as 
				locally valid cut with using \texttt{addLocal}\;
			}
			
		}
	}
	\Else
	{
		$\texttt{boundNotImprovedCountFixed} \gets 0$\;
		$\bar \cus \gets \cus \setminus \hat \cus$\;
		\For{$\ell \in \{1,\ldots, 
		size(\texttt{customerAndViolationOutside})\}$}
		{
			$i \gets$ customer from 
			$\texttt{customerAndViolationOutside}[\ell]$\;
			\If{\blue{$i \in \bar \cus$}}
			{
				$\hat \cus \gets \hat \cus \cup \{i\}$\;
				\If{$\texttt{nodeID}= \texttt{rootNode}$}
				{
					add the maximal violated inequalitiy \eqref{eq:loptimality} for customer $i$ and lower bound $LB$ with the option \texttt{purgeable} \;
				}
				\Else
				{
					add the maximal violated inequalitiy \eqref{eq:loptimality} for customer $i$ and lower bound $LB$ as locally valid cut with using \texttt{addLocal}\;
				}
				\For{$j$ with $d_{ij}\leq LB$ 
					\blue{ (this 
				implies that $j$ occurs 
				in} the maximal 
				violated 
				inequalitiy \eqref{eq:loptimality} for customer $i$ and lower 
				bound 
				$LB$\blue{)} }
				{
					$\bar \cus \gets \bar \cus \setminus \{j\} $
				}	
			}	
		}

	}	
\end{algorithm}

%
%

\subsection{Primal heuristic \label{sec:heur}}

In order to obtain primal solutions, we have implemented a greedy heuristic
in the \texttt{HeuristicCallback} of CPLEX. It uses the optimal solution 
$(y^*,z^*)$ of 
the current LP-relaxation at a \blue{\BC} node. Let~$S^H$ be the indices of 
the facilities to open in the solution produced by our heuristic. To construct  
$S^H$, we sort all the locations 
with $y_j^*>0$ in descending order and then iterate through the sorted list. 
Whenever adding a location $j$ from the list to $S^H$ would improve the 
objective function value of the heuristic solution (which initially is set to 
$\infty$), 
we add $j$ to $S^H$. This is done until $|S^H|=p$. If needed, we make multiple 
passes through the list.

\section{Computational results} 
\label{sec:results}

The runs were made on a single core of an Intel Xeon E5-2670v2 machine with 2.5 
GHz and 32GB of RAM, and all CPLEX settings were left on their default values. 
The timelimit for the runs was set to 1800 seconds. The following parameter 
values were used for our algorithm, they were determined in preliminary 
computations: 
\ifArXiV
\begin{multicols}{2}
\fi
\begin{itemize}
	\setlength\itemsep{0em}
	\item \texttt{maxNumCutsRoot}: 100
	\item \texttt{maxNumCutsTree}: 50
	\item \texttt{maxNumSepRoot}: 1000
	\item \texttt{maxNumSepTree}: 1
	\item \texttt{maxNoImprovements}: 100
	\item \texttt{maxNoImprovementsFixed}: 5
\end{itemize}
\ifArXiV
\end{multicols}
\fi
	
\subsection{Instances}

We used two sets of instances \blue{from the literature} to evaluate the performance of 
our algorithm, these sets are denoted by \texttt{pmedian} and \texttt{TSPLIB} 
and details are given below.
\begin{itemize}
\item \texttt{pmedian}: This is an instance set used in \cite{calik2013double, chen2009, contardo2019scalable}. The set contains 40 instances. All instances 
have all customer demand points as potential facility locations, so $\cus 
= \loc  = V$ holds. The number of customer demand points and hence also the 
number of potential facility locations $|\cus| = |\loc| = |V|$ is between 
$100$ and $900$, and $p$ is between $5$ and $200$.
For the concrete values of $|V|$ and $p$ for each instance see 
Table~\ref{ta:pmed}. 
\item  \texttt{TSPLIB}: This set contains larger instances and is based on the TSP-library \citep{reinelt1991tsplib}.  The number of customer demand points and hence also the number of potential facility locations $|\cus| = |\loc| = |V|$ is between 
$1621$ and $744710$, for the concrete values of $|V|$ see 
\blue{the tables in the appendix}.
 The complete set of 44 instances was used in \cite{contardo2019scalable}. In 
 other works \citep{elloumi2004, chen2009, calik2013double} used only two or 
 three 
 (smaller) instances (with up to 3038 vertices) to test the 
 scalability of the respective algorithms\footnote{In \cite{chen2009} also 
 other smaller TSPlib instances with less than 1000 vertices were used, but 
 they 
 are not included in the set used in \cite{contardo2019scalable}.}. In 
 \cite{contardo2019scalable} the instances were used with 
 $p=2,3,5,10,15,20,25,30$, while in the other works also larger values of $p$ 
 were considered.
 In the instances all customer demand points are given as 
 two-dimensional coordinates, and 
 the Euclidean distance rounded down to the nearest integer is used as 
 distance. This follows
 previous literature.
\end{itemize}

\subsection{Results}

We first give a comparison of our approaches with the lifted inequalities 
\eqref{eq:loptimality} with a \blue{\BC} using our new formulation 
\myref{PC2} without the lifting on instance set \texttt{pmedian}. Then we 
describe the results obtained by using our approaches with the lifted 
inequalities \eqref{eq:loptimality} for the larger instance set \texttt{TSPLIB}.

\paragraph{Results for the instance set \texttt{pmedian}}

\newcommand{\mVnoL}{\texttt{mVnoL}\xspace}
\newcommand{\fCnoL}{\texttt{fCnoL}\xspace}
\newcommand{\mVL}{\texttt{mVL}\xspace}
\newcommand{\fCL}{\texttt{fCL}\xspace}
\newcommand{\mVLH}{\texttt{mVLH}\xspace}
\newcommand{\fCLH}{\texttt{fCLH}\xspace}

In Table \ref{ta:pmed}, we compare the following six different configurations:
\begin{itemize}
	\setlength\itemsep{0em}
	\item \mVnoL: separation scheme \maxViolated and inequalities \eqref{eq:cuts}
	\item \fCnoL: separation scheme \fixedCustomer and inequalities 
	\eqref{eq:cuts}
	\item \mVL: separation scheme \maxViolated and lifted inequalities \eqref{eq:loptimality}
	\item \fCL: separation scheme \fixedCustomer and lifted inequalities \eqref{eq:loptimality}
	\item \mVLH: configuration \mVL and the primal heuristic described in Section \ref{sec:heur}
	\item \fCLH: configuration \fCL and the primal heuristic described in Section \ref{sec:heur}
\end{itemize}

In Table \ref{ta:pmed}, we see the huge effect of the lifted inequalities 
\eqref{eq:loptimality}. We report the runtime in seconds ($t[s]$), the 
optimality gap ($g[\%]$, which is computed as $\frac{UB-LB}{UB}\cdot 100$, 
where $LB$ is the obtained lower bound and $UB$ is the obtained upper bound) 
and the number of \blue{\BC} nodes ($\#BC$). Without lifting, only 11 of 
the 40 instances are solvable within the timelimit of 1800 seconds, while with 
lifting all the instances can be solved to optimality within four seconds 
runtime. 
Thus, our lifting has an immense positive effect.
For these instances, the primal heuristic has no discernible effect, 
as they already become solvable very fast when using the lifting. 
Also the choice of the separation scheme (\maxViolated or \fixedCustomer) 
does not influence the quality of the results a lot.
We also note 
that most of the instances can be solved within the root-node of the 
\blue{\BC} algorithm when using our lifting. 

\paragraph{Results for the instance set \texttt{TSPlib}}

In Figures \ref{fig:plotsa}-\ref{fig:plotsb}, we show plots of the runtime and 
optimality gap aggregated by $p$ for configurations \mVL, \fCL, \mVLH and 
\fCLH, i.e., the by far best configurations for the instance set 
\texttt{pmedian} that contains smaller instances.
We used the same values  for $p$ as \cite{contardo2019scalable}. Detailed 
results for the setting \fCLH can be found in the appendix in the Tables 
\ref{ta:tsplib2}-\ref{ta:tsplib30}, were we also include the results obtained 
by \cite{contardo2019scalable}. From the figures, we see that the instances are 
becoming harder to solve when $p$ becomes larger, 
which is in tune with the computational experiments of 
\cite{contardo2019scalable}.
We note that for the settings 
\mVL and \mVLH the code did not terminate for all instances, as the available 
memory was not enough. In general, the settings \fCL and \fCLH perform better 
than \mVL and \mVLH. An explanation for this could be that by focusing the 
separation of inequalities \eqref{eq:loptimality} on the subset $\hat \cus$, 
the separation procedure becomes more stable, which could allow CPLEX to easier 
decide \blue{which} inequalities to remove for further iterations. Hence, the 
LP-relaxations becomes smaller and easier to solve, and also not so 
memory-intensive. 

\blue{To further illustrate the different behavior of both 
separation schemes, Figure \ref{fig:bound} shows the obtained lower bound 
values at the root node of the \blue{\BC} tree against the runtime for 
settings \mVLH and \fCLH for the \texttt{TSPlib} instance \texttt{rl11849} for 
$p=5$. While both 
settings manage to solve this instance to opimality in the root node, we see 
that \fCLH converges much faster. The behavior of the lower bound in this 
instance is a typical behavior we observed for both strategies. 

Figures \ref{fig:numcutsa} and \ref{fig:numcutsb} show the number of added 
inequalities \eqref{eq:loptimality} for all \texttt{TSPlib} instances with 
$p=2$ and $p=5$ 
which could be solved by all four considered settings. Theses figures further 
support that with the scheme \fixedCustomer much less inequalities need to be 
added. We also see that for larger $p$ much more inequalities need to be 
added.  For the scheme \fixedCustomer the maximum number of added inequalities 
for 
$p=2$ is slightly over 500, while for $p=5$ it is around 10000. For the 
scheme 
\maxViolated this number grows from around 4000 for $p=2$ to around 80000 for 
$p=5$. We note that the number of added inequalities is not necessarily the 
same as the number of inequalities which are in the final LP-relaxations, as 
the 
CPLEX cut management may delete added violated inequalities during the course 
of the \blue{\BC}.}

From the Figures \ref{fig:plotsa}-\ref{fig:plotsb}, we also see that there is 
some effect 
of using the primal heuristic, in particular regarding the optimality gap. This 
is consistent when comparing our obtained lower and upper bound values with the 
values obtained by \cite{contardo2019scalable}, i.e., from the comparison we 
see that our approach provides good lower bounds (sometimes even improving on 
the lower bound of \cite{contardo2019scalable}, which they obtained within a 
timelimit of 24 hours), while the upper bounds for some instances could still 
be improved. This again shows the important contribution of our lifting 
procedure.

\blue{From the results in Tables 
	\ref{ta:tsplib2}-\ref{ta:tsplib30} we can also see that in some instances 
	for larger $p$ our approach scales worse compared to the approach of 
	\cite{contardo2019scalable}. We believe that this could be due to the 
	combination of the following factors: i) our method is a ``classical'' 
	\blue{\BC} approach, thus the strength of the LP-relaxation is of 
	course crucial. For larger $p$ the feasible region of the LP-relaxation is 
	getting larger, and thus the LP-relaxation may also become weaker; ii) 
	state-of-the-art methods such as \cite{contardo2019scalable} work by 
	iteratively resolving set cover problems. For these problems, the used 
	MIP-solvers may be able to use additional preprocessing routines, 
	general-purpose cuts or other speed-up tricks, which are nowadays built in 
	such solvers; iii) \cite{contardo2019scalable} also uses a specific 
	preprocessing which is applied before each set cover problem. The procedure is not 
	described, thus it is also clear, if such a procedure could be directly 
	transferable into our \blue{\BC} approach; iv) in some cases the 
	obtained primal bound is the bottleneck for our algorithm, even though we 
	already implemented a primal heuristic.}

\begin{sidewaystable}[h!tp]	
	\setlength{\tabcolsep}{5pt}
	\centering
	\caption{Detailed results for the \texttt{pmedian} instances\label{ta:pmed}} 
	\begingroup\footnotesize
	\begin{tabular}{lll|rrr|rrr|rrr|rrr|rrr|rrr}
		\toprule
		& & & \multicolumn{3}{|c}{$\mVnoL$}  & \multicolumn{3}{|c}{$\fCnoL$} & 
		\multicolumn{3}{|c}{$\mVL$} & \multicolumn{3}{|c}{$\fCL$} & 
		\multicolumn{3}{|c}{$\mVLH$} & \multicolumn{3}{|c}{$\fCLH$} \\ name & 
		$|V|$ & $p$ & $t[s]$ & $g[\%]$ & \#BC & $t[s]$ & $g[\%]$ & \#BC  & 
		$t[s]$ & $g[\%]$ & \#BC  & $t[s]$ & $g[\%]$ & \#BC  & $t[s]$ & $g[\%]$ 
		& \#BC  & $t[s]$ & $g[\%]$ & \#BC\\ \midrule
		pmed1 & 100 & 5 &   57.72 & 0.00 & 115902 &   52.96 & 0.00 & 114523 & 0.14 & 0.00 & 45 & 0.11 & 0.00 & 46 & 0.15 & 0.00 & 71 & 0.10 & 0.00 & 27 \\ 
		pmed2 & 100 & 10 & 1198.51 & 0.00 & 2384887 & 1695.92 & 0.00 & 3342277 & 0.04 & 0.00 & 0 & 0.05 & 0.00 & 0 & 0.05 & 0.00 & 0 & 0.05 & 0.00 & 0 \\ 
		pmed3 & 100 & 10 & 1163.21 & 0.00 & 1862933 &  729.00 & 0.00 & 1250156 & 0.04 & 0.00 & 0 & 0.04 & 0.00 & 0 & 0.05 & 0.00 & 0 & 0.05 & 0.00 & 0 \\ 
		pmed4 & 100 & 20 & TL & 10.81 & 1616178 & TL & 10.81 & 1877802 & 0.05 & 0.00 & 0 & 0.05 & 0.00 & 0 & 0.05 & 0.00 & 0 & 0.05 & 0.00 & 0 \\ 
		pmed5 & 100 & 33 & TL & 8.33 & 2454807 & TL & 8.33 & 2367363 & 0.05 & 0.00 & 0 & 0.05 & 0.00 & 0 & 0.05 & 0.00 & 0 & 0.05 & 0.00 & 0 \\ 
		pmed6 & 200 & 5 &   52.98 & 0.00 & 60870 &   61.15 & 0.00 & 79822 & 0.11 & 0.00 & 3 & 0.11 & 0.00 & 3 & 0.12 & 0.00 & 3 & 0.10 & 0.00 & 3 \\ 
		pmed7 & 200 & 10 & TL & 9.38 & 1139346 & TL & 7.81 & 1294446 & 0.09 & 0.00 & 0 & 0.08 & 0.00 & 0 & 0.09 & 0.00 & 0 & 0.08 & 0.00 & 0 \\ 
		pmed8 & 200 & 20 & TL & 20.00 & 778085 & TL & 20.00 & 794555 & 0.11 & 0.00 & 0 & 0.10 & 0.00 & 0 & 0.12 & 0.00 & 0 & 0.10 & 0.00 & 0 \\ 
		pmed9 & 200 & 40 & TL & 18.92 & 543618 & TL & 18.92 & 581900 & 0.14 & 0.00 & 0 & 0.15 & 0.00 & 0 & 0.14 & 0.00 & 0 & 0.16 & 0.00 & 0 \\ 
		pmed10 & 200 & 67 & TL & 25.00 & 488041 & TL & 25.00 & 560078 & 0.13 & 0.00 & 0 & 0.15 & 0.00 & 0 & 0.13 & 0.00 & 0 & 0.16 & 0.00 & 0 \\ 
		pmed11 & 300 & 5 &   73.09 & 0.00 & 109946 &   60.72 & 0.00 & 89820 & 0.16 & 0.00 & 2 & 0.13 & 0.00 & 0 & 0.16 & 0.00 & 2 & 0.13 & 0.00 & 0 \\ 
		pmed12 & 300 & 10 & TL & 5.88 & 979558 & TL & 3.92 & 1110223 & 0.17 & 0.00 & 0 & 0.16 & 0.00 & 0 & 0.16 & 0.00 & 0 & 0.15 & 0.00 & 0 \\ 
		pmed13 & 300 & 30 & TL & 16.67 & 402537 & TL & 22.22 & 383634 & 0.21 & 0.00 & 0 & 0.24 & 0.00 & 0 & 0.22 & 0.00 & 0 & 0.24 & 0.00 & 0 \\ 
		pmed14 & 300 & 60 & TL & 23.08 & 260750 & TL & 23.08 & 252826 & 0.36 & 0.00 & 2 & 0.36 & 0.00 & 0 & 0.37 & 0.00 & 2 & 0.37 & 0.00 & 0 \\ 
		pmed15 & 300 & 100 & TL & 27.78 & 221453 & TL & 27.78 & 216826 & 0.33 & 0.00 & 0 & 0.36 & 0.00 & 0 & 0.34 & 0.00 & 0 & 0.38 & 0.00 & 0 \\ 
		pmed16 & 400 & 5 &   24.67 & 0.00 & 23852 &   22.56 & 0.00 & 22601 & 0.27 & 0.00 & 0 & 0.25 & 0.00 & 0 & 0.28 & 0.00 & 0 & 0.26 & 0.00 & 0 \\ 
		pmed17 & 400 & 10 & TL & 7.69 & 618267 & TL & 7.69 & 672741 & 0.32 & 0.00 & 0 & 0.30 & 0.00 & 0 & 0.32 & 0.00 & 0 & 0.33 & 0.00 & 0 \\ 
		pmed18 & 400 & 40 & TL & 21.43 & 247230 & TL & 21.43 & 280508 & 0.49 & 0.00 & 0 & 0.62 & 0.00 & 9 & 0.50 & 0.00 & 0 & 0.69 & 0.00 & 15 \\ 
		pmed19 & 400 & 80 & TL & 22.22 & 155689 & TL & 22.22 & 158464 & 0.72 & 0.00 & 0 & 0.63 & 0.00 & 0 & 0.74 & 0.00 & 0 & 0.65 & 0.00 & 0 \\ 
		pmed20 & 400 & 133 & TL & 23.08 & 130236 & TL & 23.08 & 135130 & 0.57 & 0.00 & 0 & 0.61 & 0.00 & 0 & 0.57 & 0.00 & 0 & 0.64 & 0.00 & 0 \\ 
		pmed21 & 500 & 5 &  381.19 & 0.00 & 337602 &  377.07 & 0.00 & 334501 & 0.48 & 0.00 & 0 & 0.47 & 0.00 & 0 & 0.48 & 0.00 & 0 & 0.47 & 0.00 & 0 \\ 
		pmed22 & 500 & 10 & TL & 7.89 & 501977 & TL & 7.89 & 512749 & 0.67 & 0.00 & 17 & 0.86 & 0.00 & 89 & 0.75 & 0.00 & 61 & 0.97 & 0.00 & 130 \\ 
		pmed23 & 500 & 50 & TL & 22.73 & 182841 & TL & 22.73 & 159898 & 0.79 & 0.00 & 0 & 0.85 & 0.00 & 6 & 0.81 & 0.00 & 0 & 0.87 & 0.00 & 6 \\ 
		pmed24 & 500 & 100 & TL & 26.67 & 102612 & TL & 26.67 & 102117 & 0.77 & 0.00 & 0 & 0.83 & 0.00 & 0 & 0.78 & 0.00 & 0 & 0.84 & 0.00 & 0 \\ 
		pmed25 & 500 & 167 & TL & 27.27 & 75362 & TL & 27.27 & 79665 & 0.92 & 0.00 & 0 & 0.98 & 0.00 & 0 & 0.95 & 0.00 & 0 & 1.02 & 0.00 & 0 \\ 
		pmed26 & 600 & 5 &  180.59 & 0.00 & 115119 &  183.90 & 0.00 & 119306 & 0.84 & 0.00 & 2 & 0.80 & 0.00 & 1 & 0.86 & 0.00 & 15 & 0.80 & 0.00 & 1 \\ 
		pmed27 & 600 & 10 & TL & 9.38 & 533068 & TL & 9.38 & 496703 & 0.82 & 0.00 & 0 & 0.82 & 0.00 & 0 & 0.82 & 0.00 & 0 & 0.83 & 0.00 & 0 \\ 
		pmed28 & 600 & 60 & TL & 22.22 & 132018 & TL & 22.22 & 144764 & 1.04 & 0.00 & 0 & 1.05 & 0.00 & 0 & 1.06 & 0.00 & 0 & 1.08 & 0.00 & 0 \\ 
		pmed29 & 600 & 120 & TL & 23.08 & 75833 & TL & 23.08 & 78090 & 1.17 & 0.00 & 0 & 1.23 & 0.00 & 0 & 1.20 & 0.00 & 0 & 1.26 & 0.00 & 0 \\ 
		pmed30 & 600 & 200 & TL & 22.22 & 51632 & TL & 22.22 & 54290 & 1.34 & 0.00 & 0 & 1.42 & 0.00 & 0 & 1.37 & 0.00 & 0 & 1.47 & 0.00 & 0 \\ 
		pmed31 & 700 & 5 &  746.29 & 0.00 & 401934 &  479.78 & 0.00 & 234253 & 1.24 & 0.00 & 0 & 1.20 & 0.00 & 0 & 1.22 & 0.00 & 0 & 1.19 & 0.00 & 0 \\ 
		pmed32 & 700 & 10 & TL & 10.34 & 347742 & TL & 10.34 & 368273 & 1.39 & 0.00 & 10 & 1.59 & 0.00 & 60 & 1.40 & 0.00 & 10 & 1.53 & 0.00 & 36 \\ 
		pmed33 & 700 & 70 & TL & 20.00 & 86212 & TL & 20.00 & 92720 & 1.63 & 0.00 & 0 & 1.78 & 0.00 & 0 & 1.53 & 0.00 & 0 & 1.77 & 0.00 & 0 \\ 
		pmed34 & 700 & 140 & TL & 27.27 & 57991 & TL & 27.27 & 55186 & 1.75 & 0.00 & 0 & 1.75 & 0.00 & 0 & 1.78 & 0.00 & 0 & 1.74 & 0.00 & 0 \\ 
		pmed35 & 800 & 5 &   66.68 & 0.00 & 41432 &   69.04 & 0.00 & 41664 & 1.76 & 0.00 & 3 & 1.74 & 0.00 & 0 & 1.72 & 0.00 & 0 & 1.74 & 0.00 & 0 \\ 
		pmed36 & 800 & 10 & TL & 7.41 & 339494 & TL & 7.41 & 380960 & 1.92 & 0.00 & 9 & 1.92 & 0.00 & 5 & 1.93 & 0.00 & 9 & 1.95 & 0.00 & 5 \\ 
		pmed37 & 800 & 80 & TL & 26.67 & 79136 & TL & 26.67 & 67520 & 2.44 & 0.00 & 0 & 2.26 & 0.00 & 0 & 2.47 & 0.00 & 0 & 2.27 & 0.00 & 0 \\ 
		pmed38 & 900 & 5 &   43.10 & 0.00 & 23546 &   31.83 & 0.00 & 16249 & 2.50 & 0.00 & 0 & 2.47 & 0.00 & 0 & 2.50 & 0.00 & 0 & 2.48 & 0.00 & 0 \\ 
		pmed39 & 900 & 10 & TL & 4.35 & 364938 & TL & 4.35 & 338705 & 2.70 & 0.00 & 28 & 2.53 & 0.00 & 0 & 2.56 & 0.00 & 0 & 2.55 & 0.00 & 0 \\ 
		pmed40 & 900 & 90 & TL & 23.08 & 55589 & TL & 23.08 & 54009 & 3.44 & 0.00 & 0 & 3.26 & 0.00 & 0 & 3.45 & 0.00 & 0 & 3.30 & 0.00 & 0 \\ 
		\bottomrule
	\end{tabular}
	\endgroup
\end{sidewaystable}	

\begin{figure}[h!tb]
	\begin{subfigure}[b]{.49\linewidth}
		\centering
		\includegraphics[width=\textwidth]{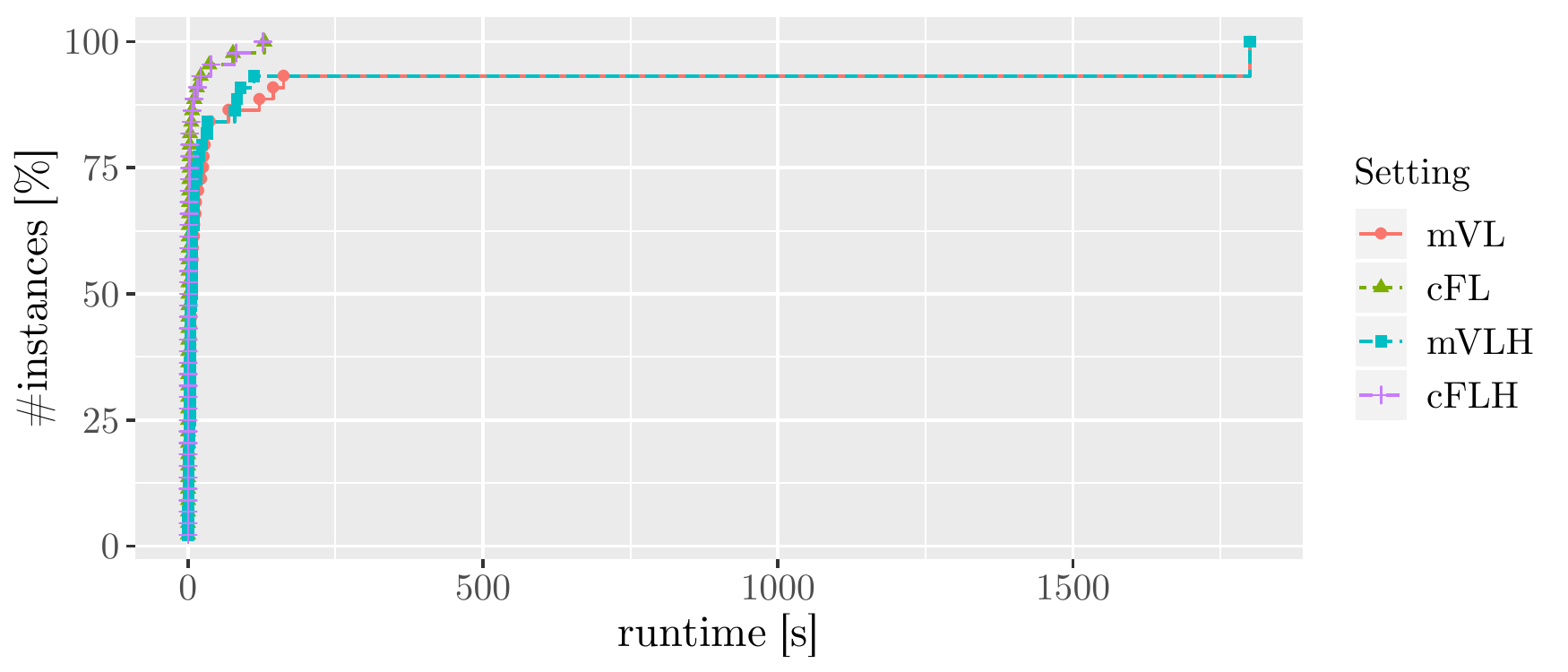}
		\caption{$p=2$, runtime}
	\end{subfigure}
	\begin{subfigure}[b]{.49\linewidth}
	\centering
	\includegraphics[width=\textwidth]{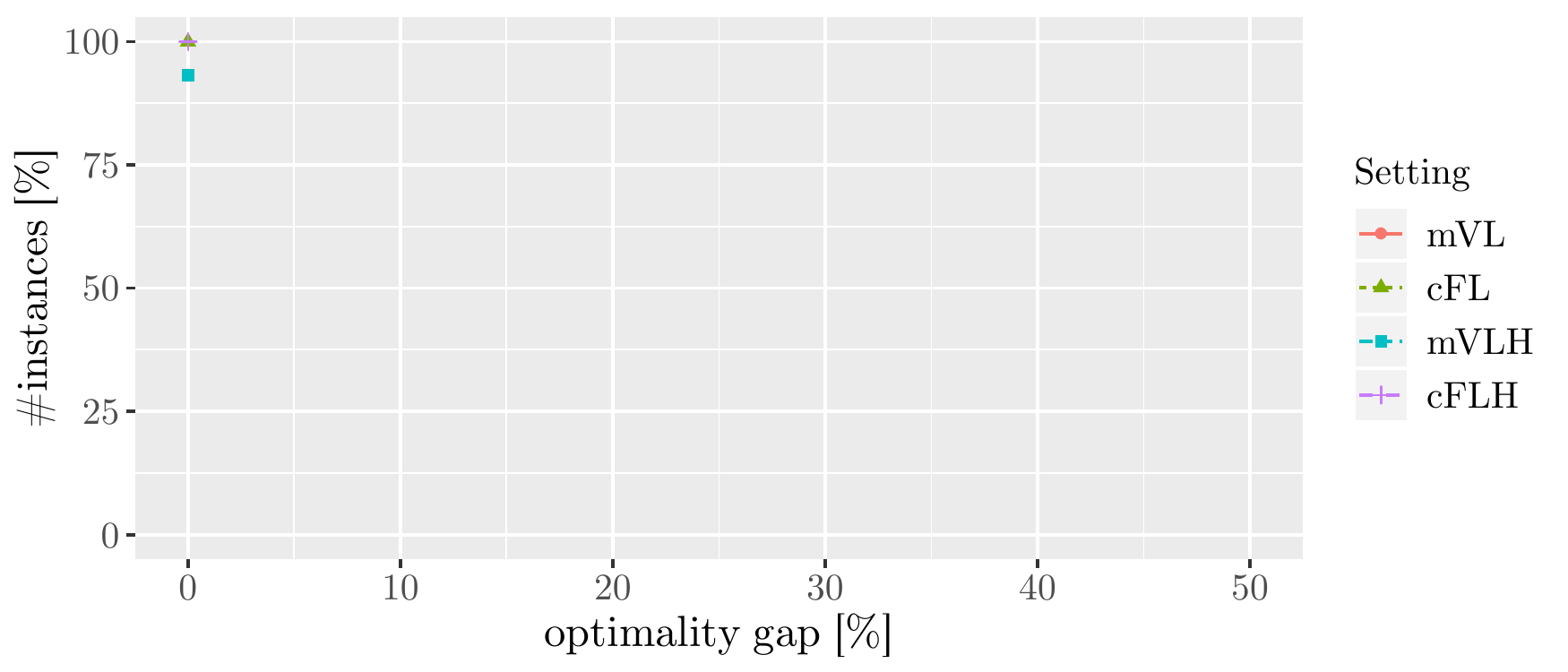}
	\caption{$p=2$, optimality gap}
\end{subfigure}

	\begin{subfigure}[b]{.49\linewidth}
	\centering
	\includegraphics[width=\textwidth]{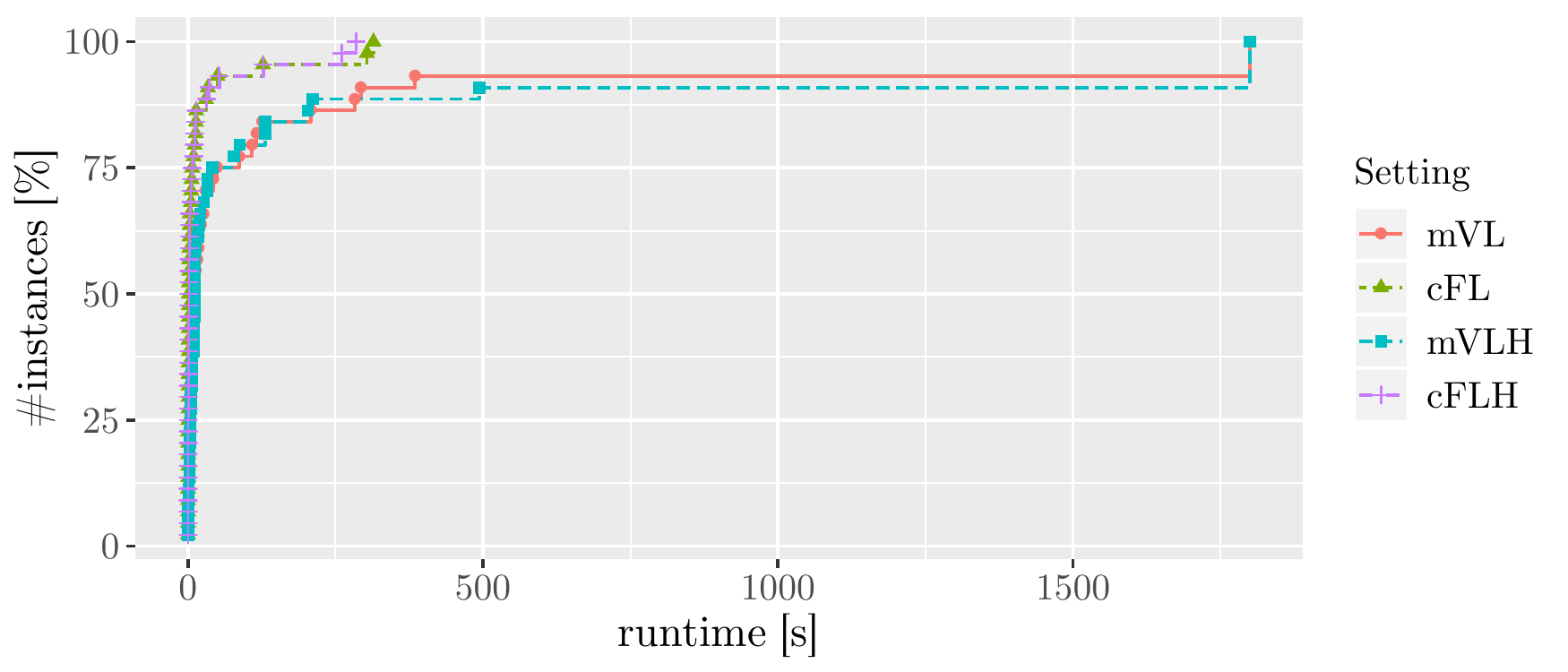}
	\caption{$p=3$, runtime}
\end{subfigure}
\begin{subfigure}[b]{.49\linewidth}
	\centering
	\includegraphics[width=\textwidth]{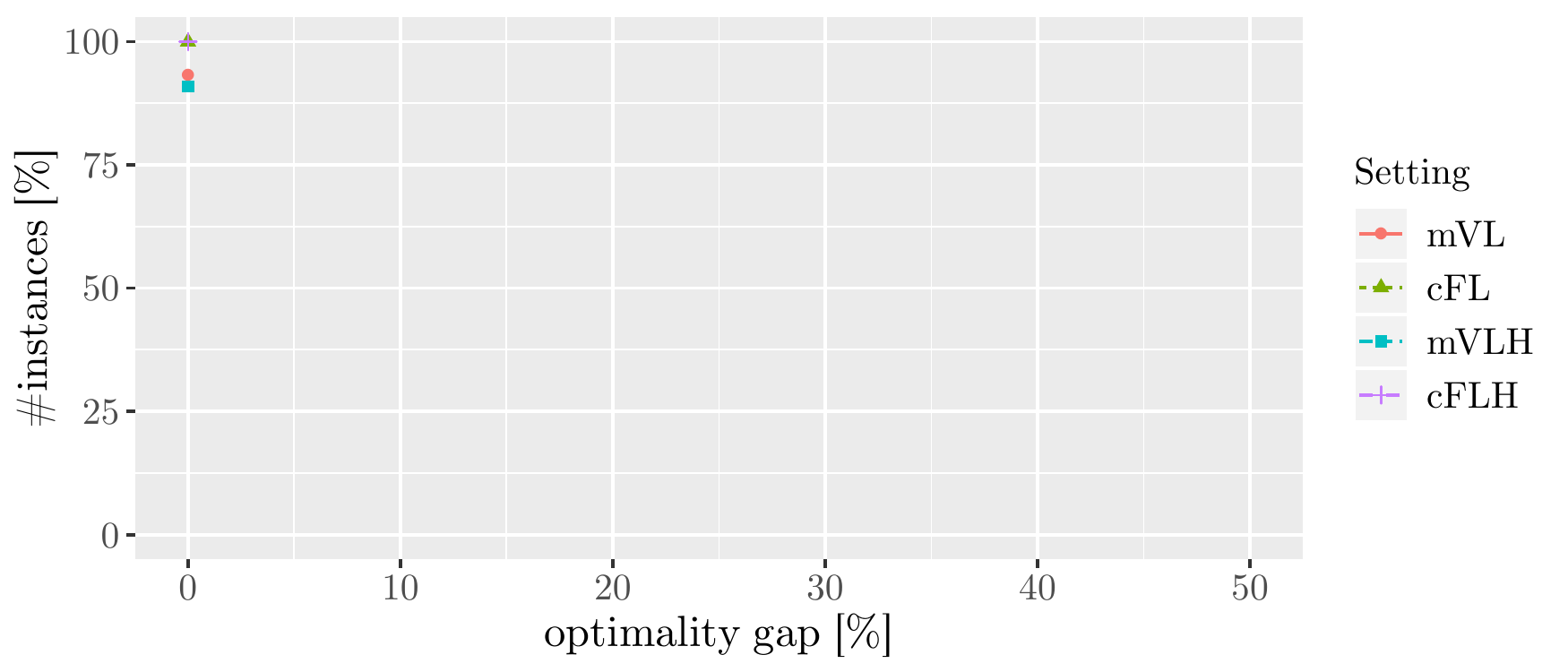}
	\caption{$p=3$, optimality gap}
\end{subfigure}

	\begin{subfigure}[b]{.49\linewidth}
	\centering
	\includegraphics[width=\textwidth]{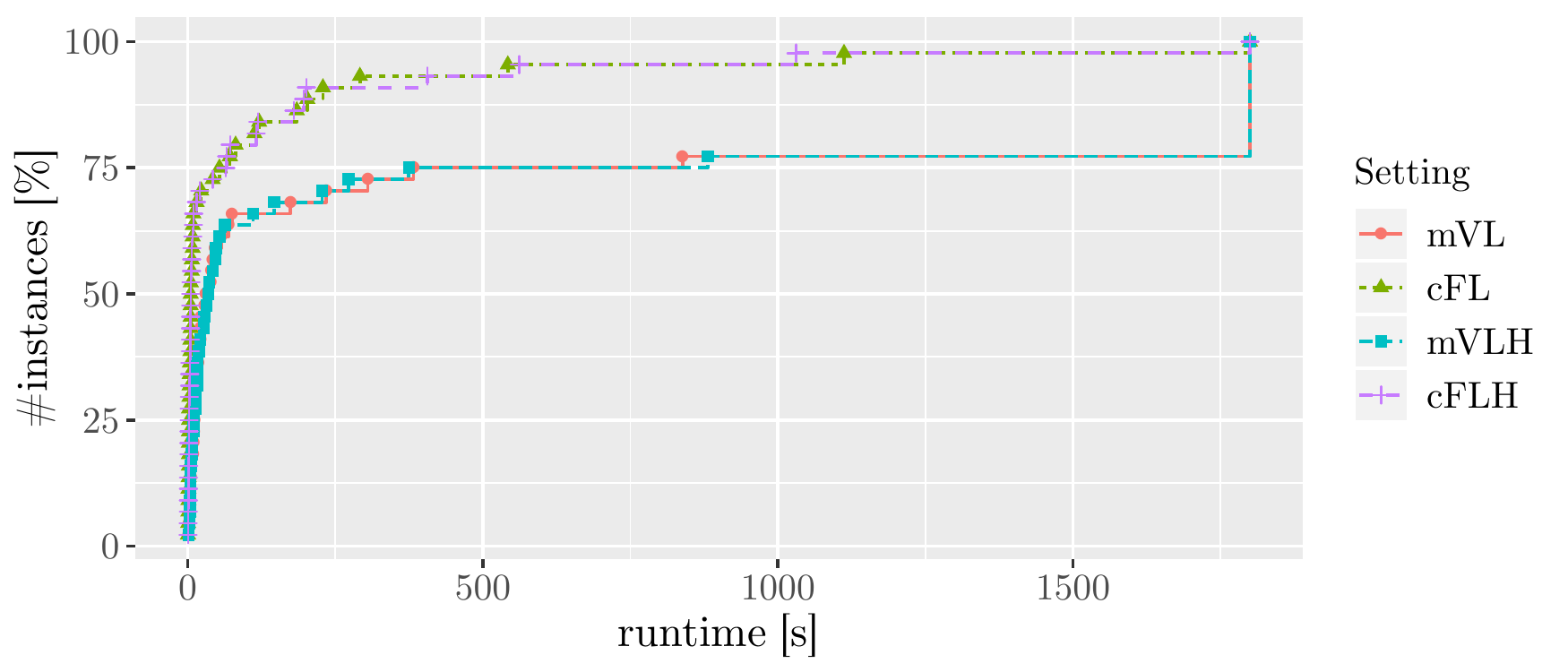}
	\caption{$p=5$, runtime}
\end{subfigure}
\begin{subfigure}[b]{.49\linewidth}
	\centering
	\includegraphics[width=\textwidth]{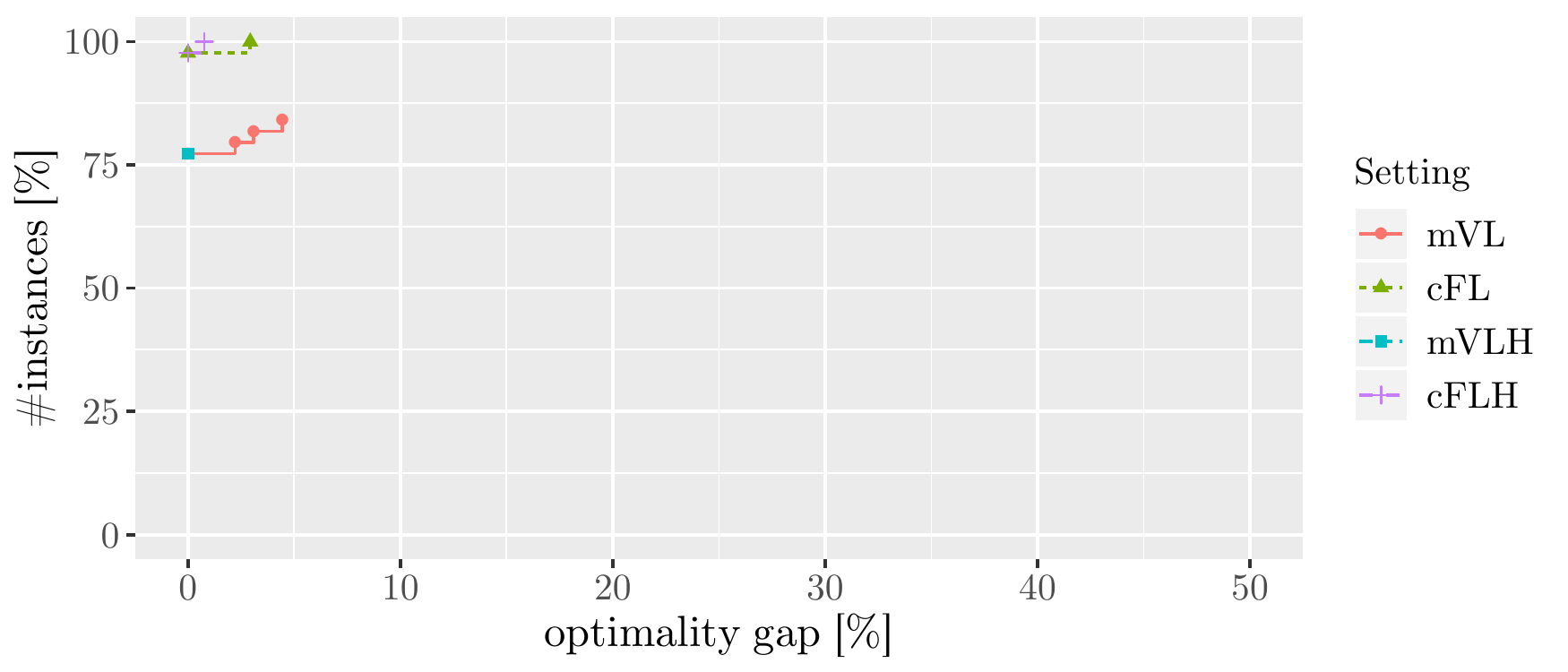}
	\caption{$p=5$, optimality gap}
\end{subfigure}

\begin{subfigure}[b]{.49\linewidth}
	\centering
	\includegraphics[width=\textwidth]{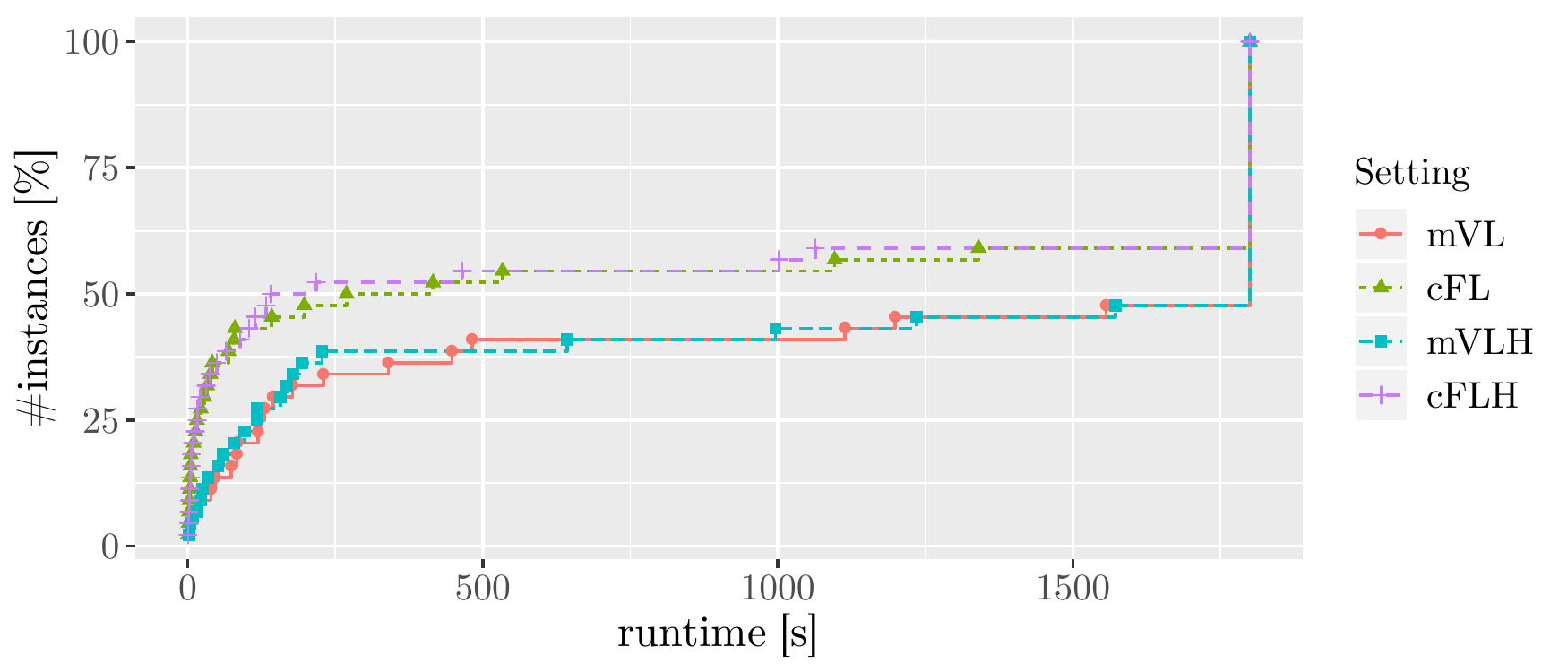}
	\caption{$p=10$, runtime}
\end{subfigure}
\begin{subfigure}[b]{.49\linewidth}
	\centering
	\includegraphics[width=\textwidth]{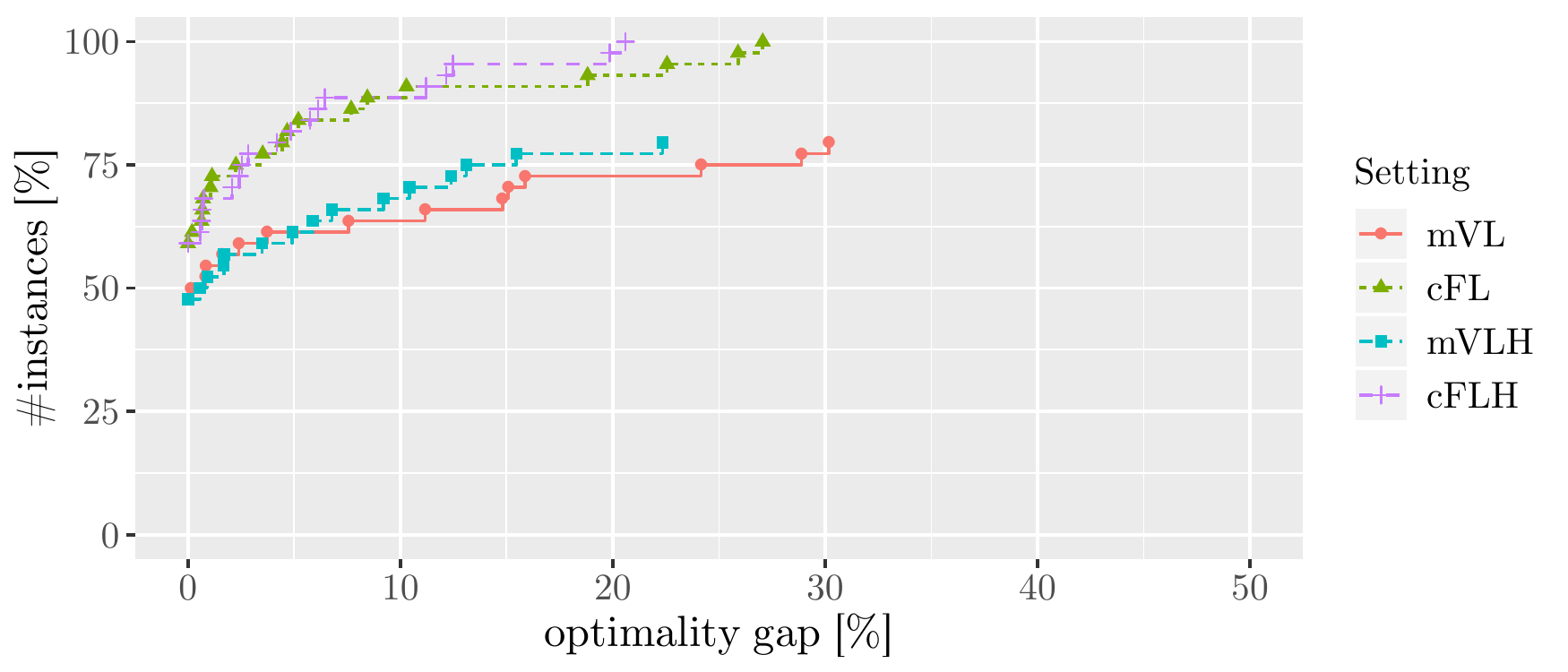}
	\caption{$p=10$, optimality gap}
\end{subfigure}
\caption{Plots for runtime and optimality gap for $p=2,3,5,10$ \blue{for the 
\texttt{TSPLIB} instances}. If 
\texttt{\#instances[\%]} in the optimality gap plot does not sum up to 100\%, 
this means that some runs did not finish due to memory issues. 
\label{fig:plotsa}}

\end{figure}

\begin{figure}[h!tb]

	\begin{subfigure}[b]{.49\linewidth}
		\centering
		\includegraphics[width=\textwidth]{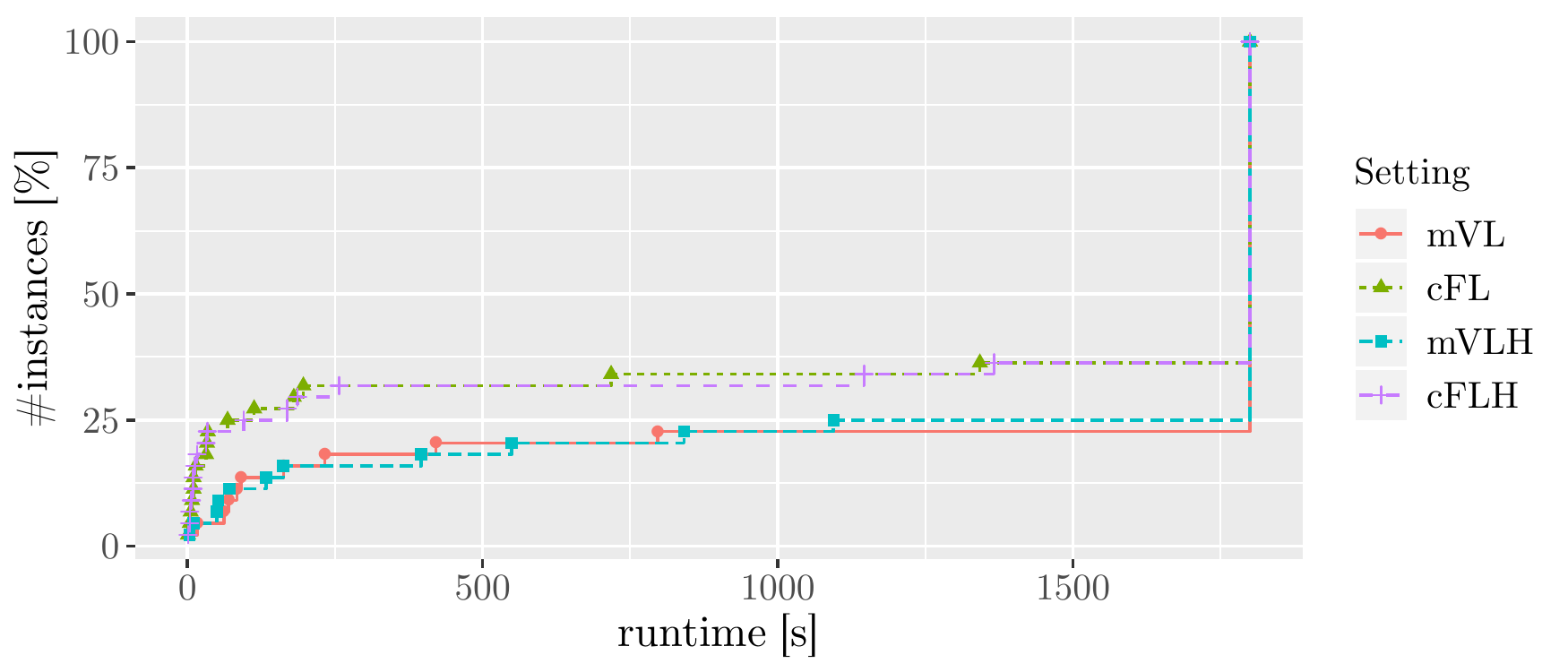}
		\caption{$p=15$, runtime}
	\end{subfigure}
	\begin{subfigure}[b]{.49\linewidth}
		\centering
		\includegraphics[width=\textwidth]{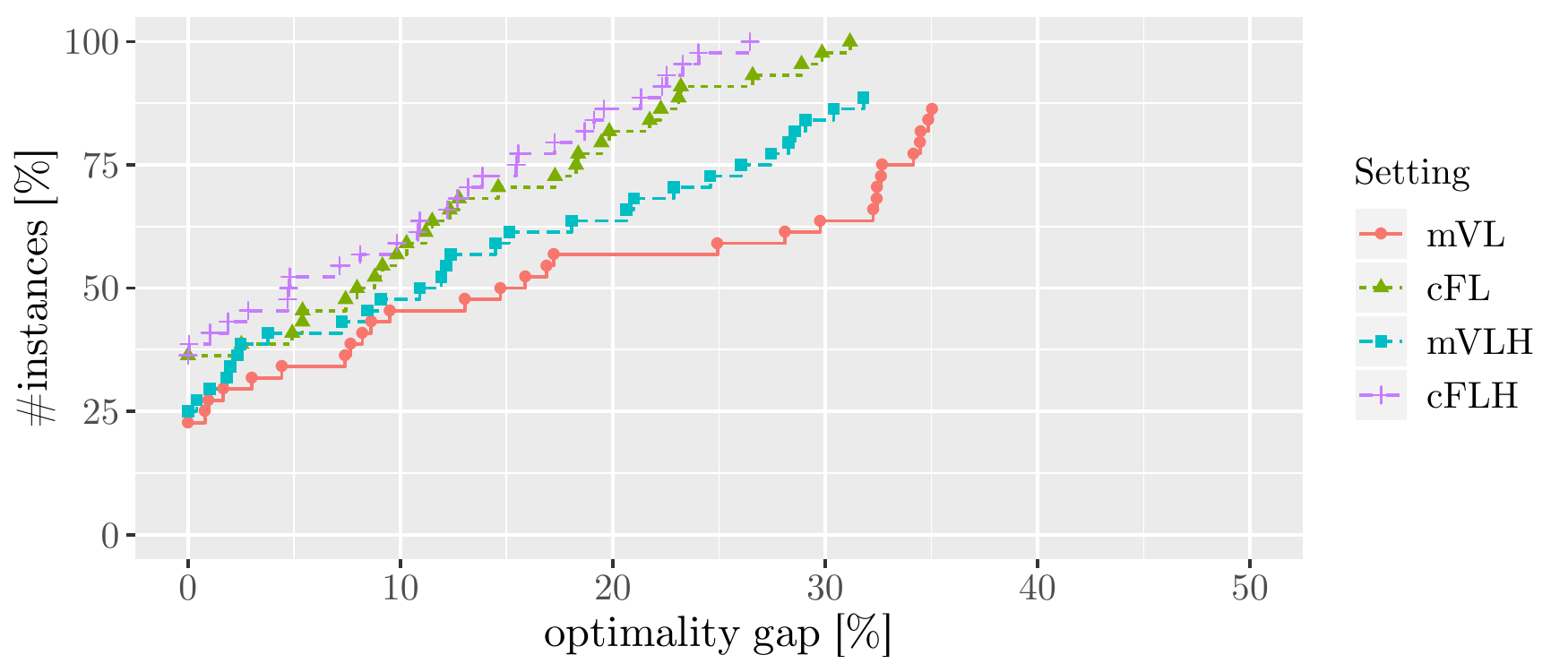}
		\caption{$p=15$, optimality gap}
	\end{subfigure}
	
	\begin{subfigure}[b]{.49\linewidth}
		\centering
		\includegraphics[width=\textwidth]{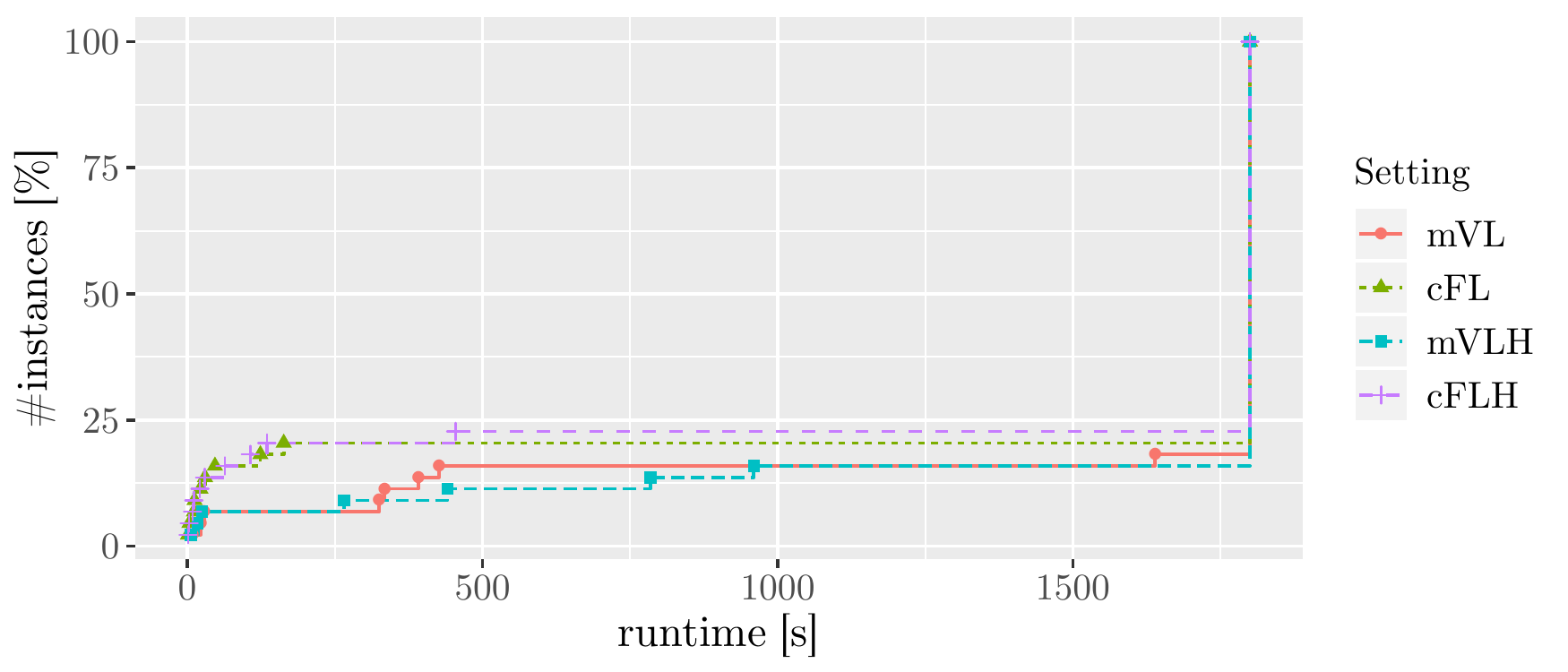}
		\caption{$p=20$, runtime}
	\end{subfigure}
	\begin{subfigure}[b]{.49\linewidth}
		\centering
		\includegraphics[width=\textwidth]{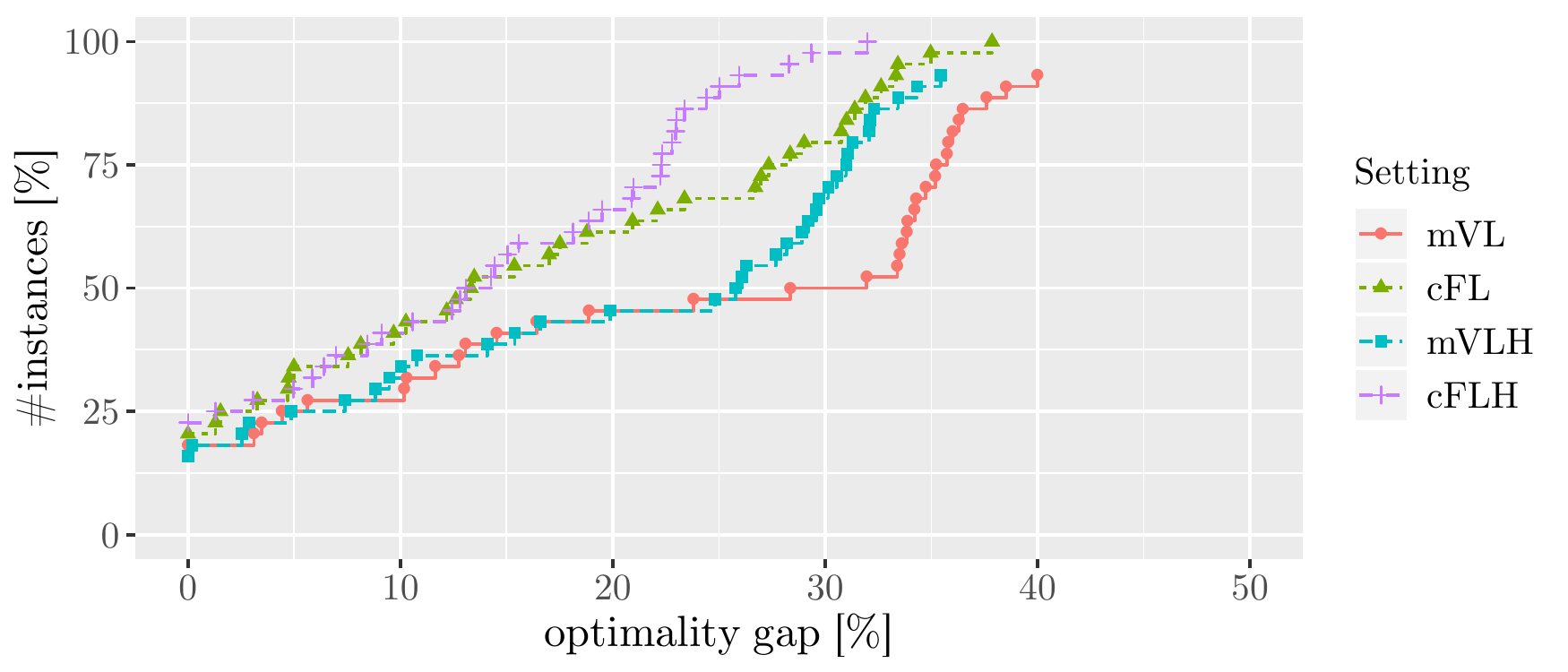}
		\caption{$p=20$, optimality gap}
	\end{subfigure}
	
	\begin{subfigure}[b]{.49\linewidth}
		\centering
		\includegraphics[width=\textwidth]{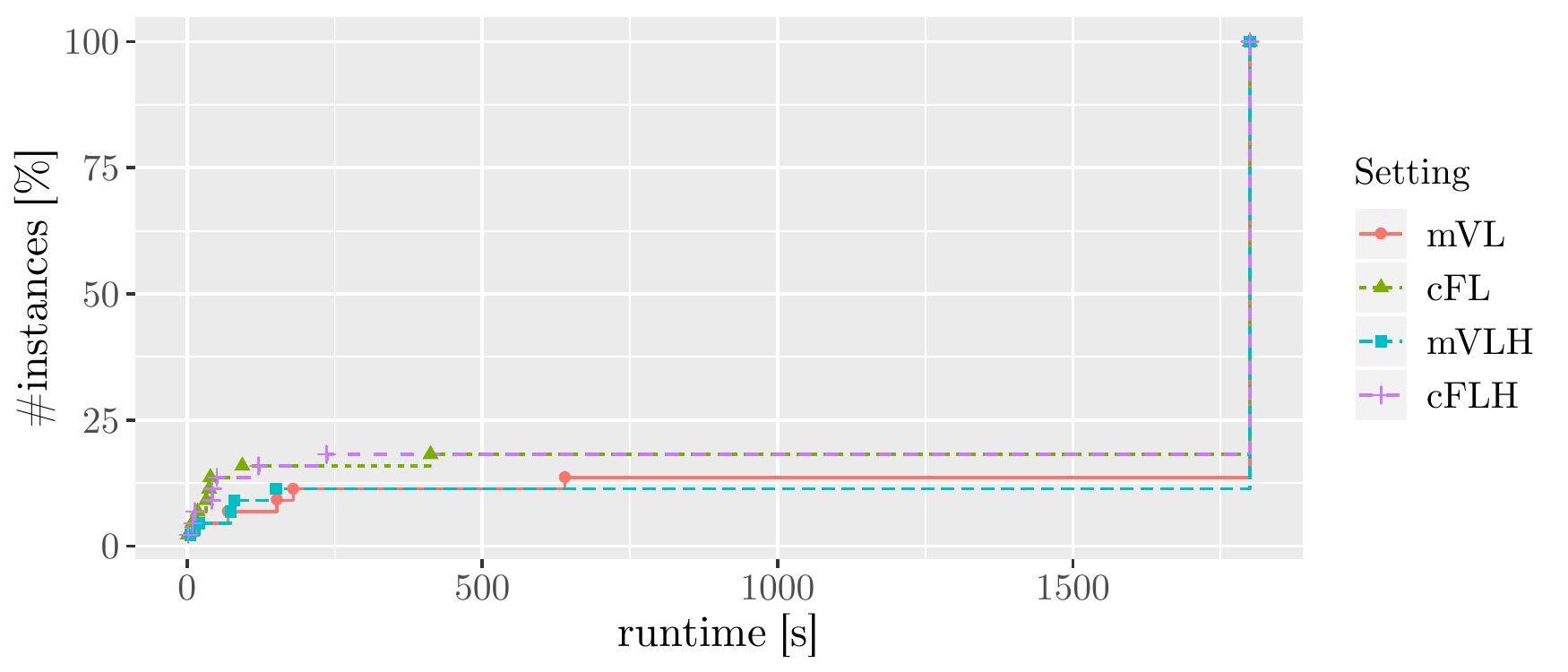}
		\caption{$p=25$, runtime}
	\end{subfigure}
	\begin{subfigure}[b]{.49\linewidth}
		\centering
		\includegraphics[width=\textwidth]{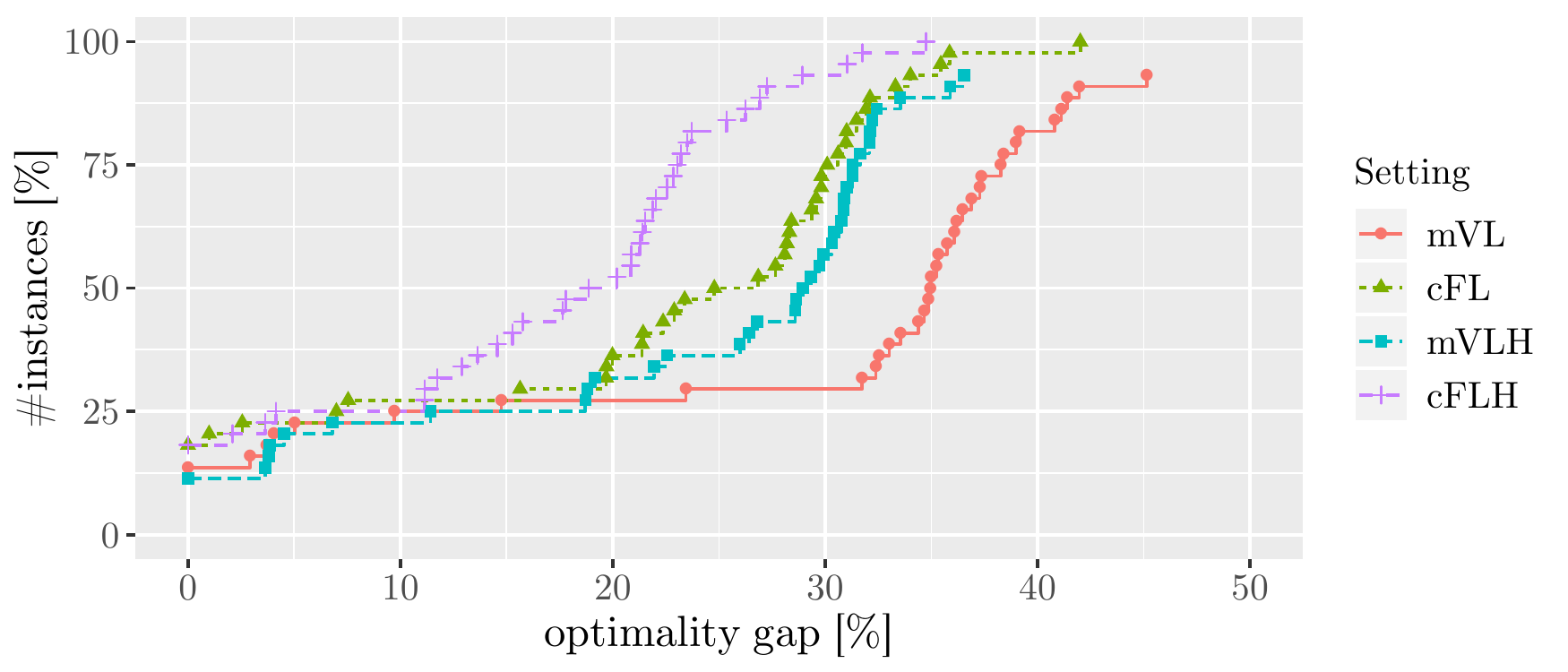}
		\caption{$p=25$, optimality gap}
	\end{subfigure}
	
	\begin{subfigure}[b]{.49\linewidth}
		\centering
		\includegraphics[width=\textwidth]{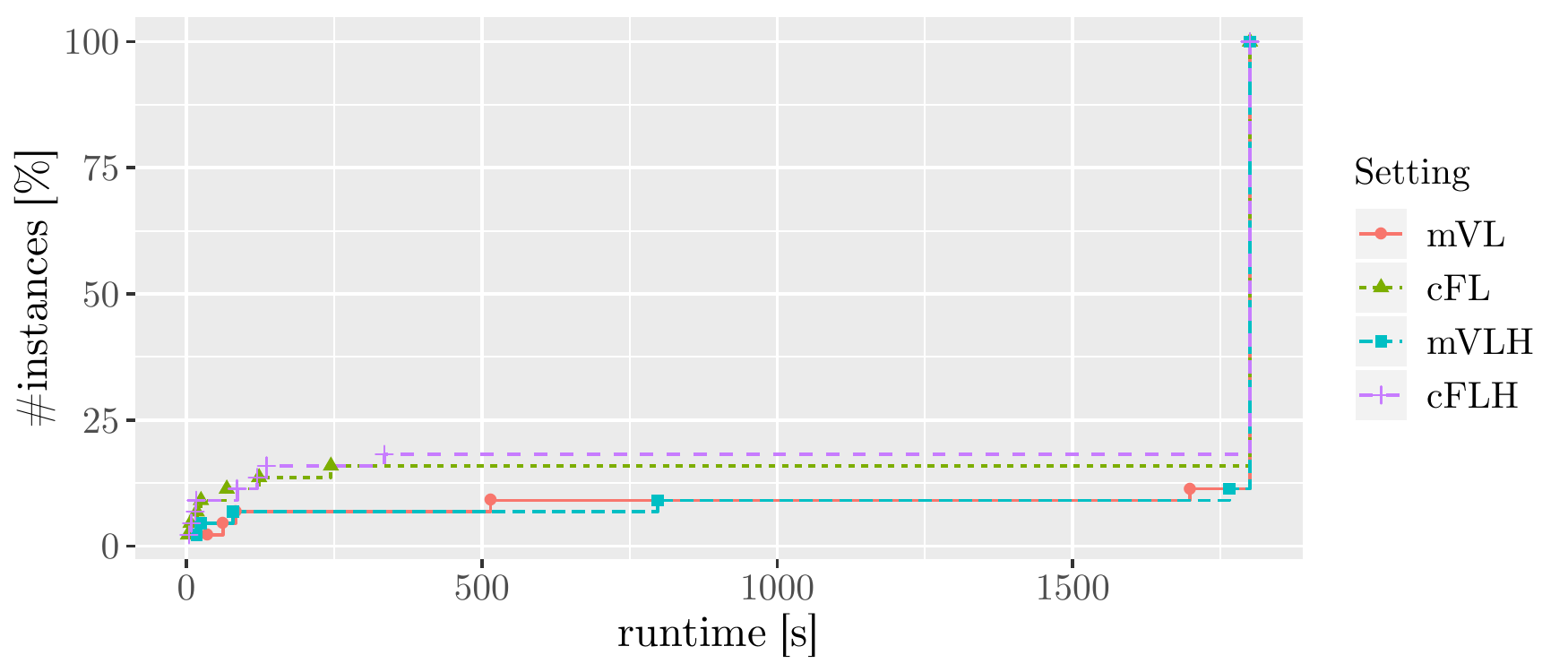}
		\caption{$p=30$, runtime}
	\end{subfigure}
	\begin{subfigure}[b]{.49\linewidth}
		\centering
		\includegraphics[width=\textwidth]{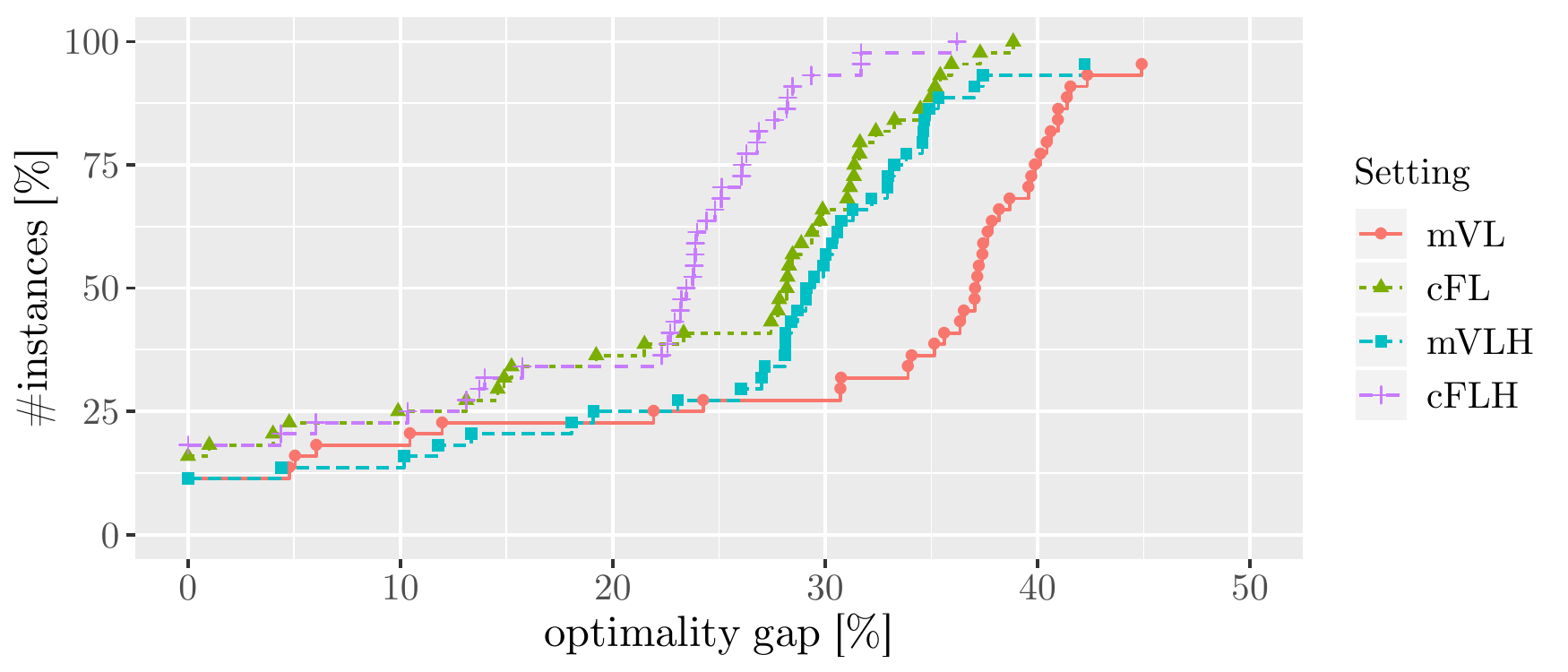}
		\caption{$p=30$, optimality gap}
	\end{subfigure}
	\caption{Plots for runtime and optimality gap for $p=15,20,25,30$ \blue{for 
	the \texttt{TSPLIB} instances}. If 
	\texttt{\#instances[\%]} in the optimality gap plot does not sum up to 
	100\%, this means that some runs did not finish due to memory issues. 
	\label{fig:plotsb}}
\end{figure}

\begin{figure}[h!tb]
		\centering
		\includegraphics[width=0.7\textwidth]{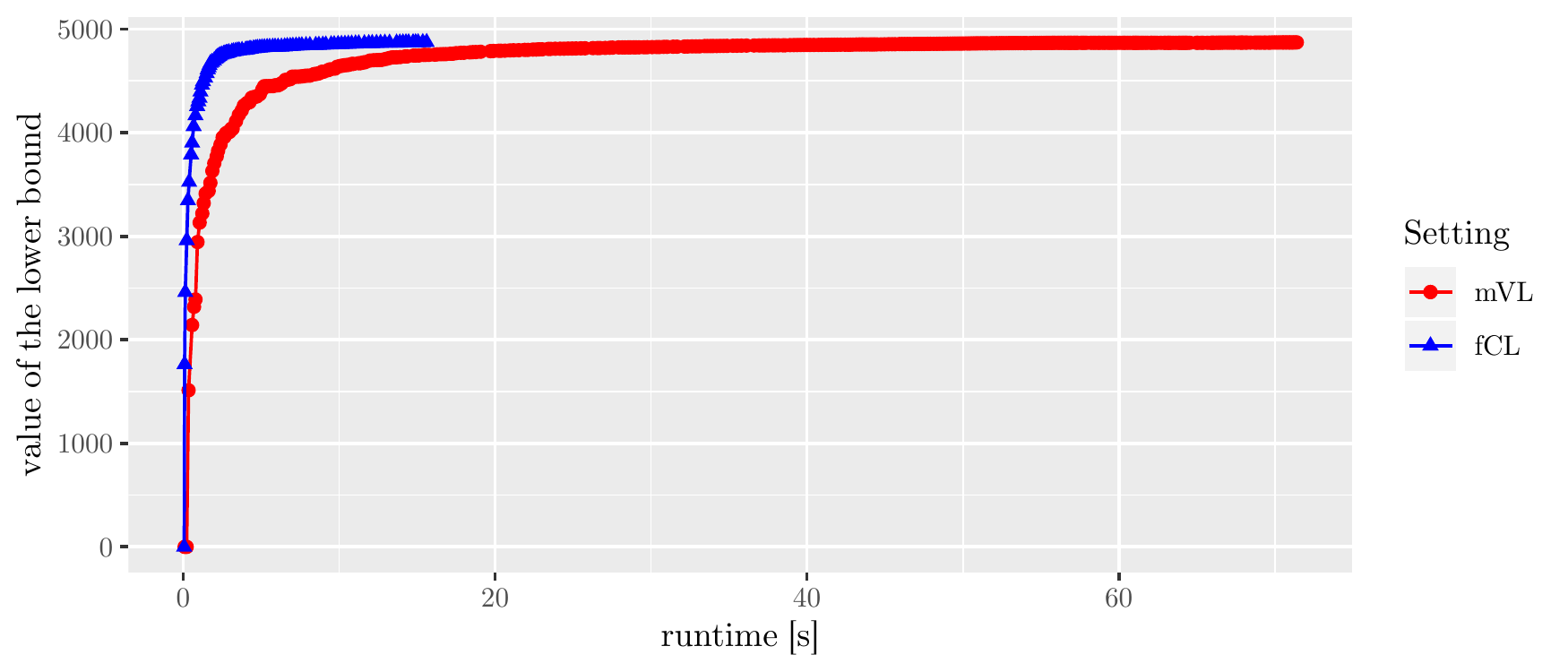}
		\caption{\blue{Behavior of the lower bound at the root node for the 
		instance 
		\texttt{rl11849} with $p=5$ and our two separation schemes} 
		\label{fig:bound}}
\end{figure}

\begin{figure}[h!tb]
	
	\begin{subfigure}[b]{.49\linewidth}
		\centering
		\includegraphics[width=\textwidth]{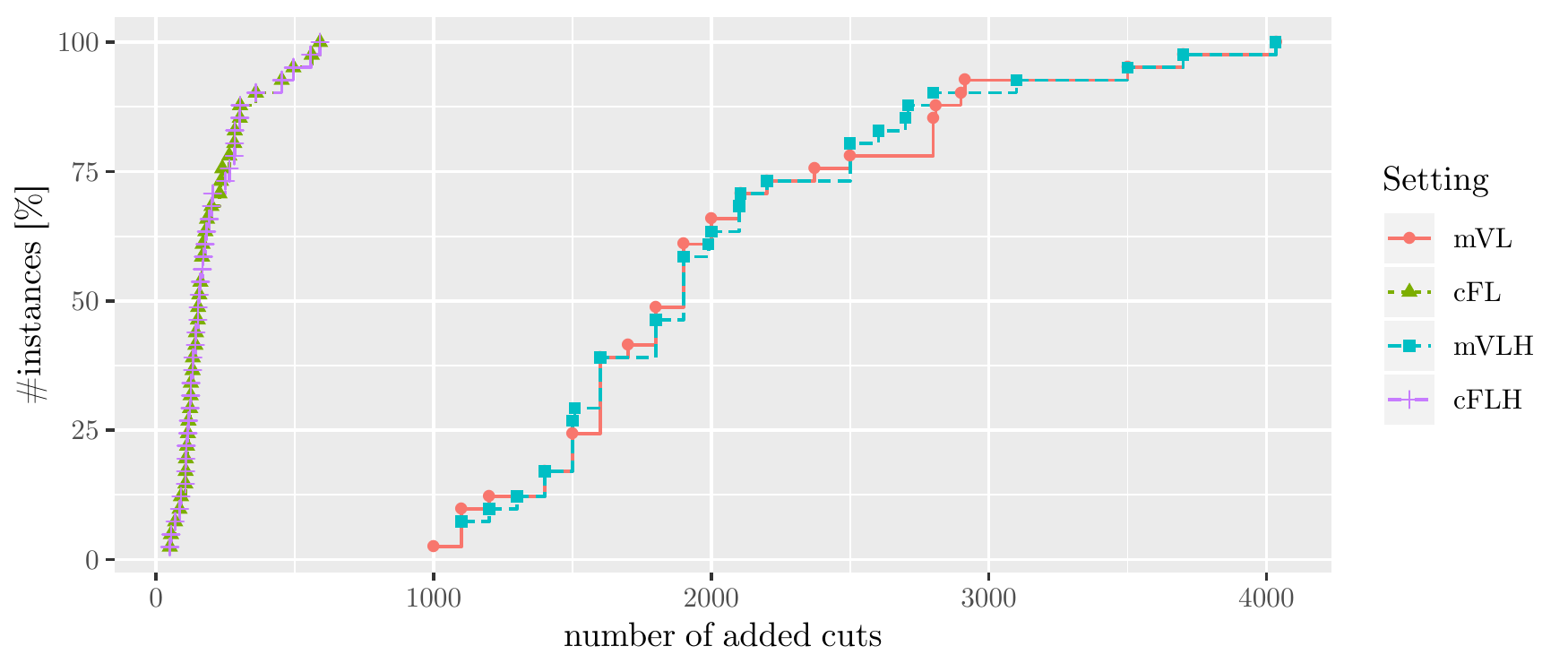}
		\caption{\blue{$p=2$, 41 instances \label{fig:numcutsa}}}
	\end{subfigure}
	\begin{subfigure}[b]{.49\linewidth}
		\centering
		\includegraphics[width=\textwidth]{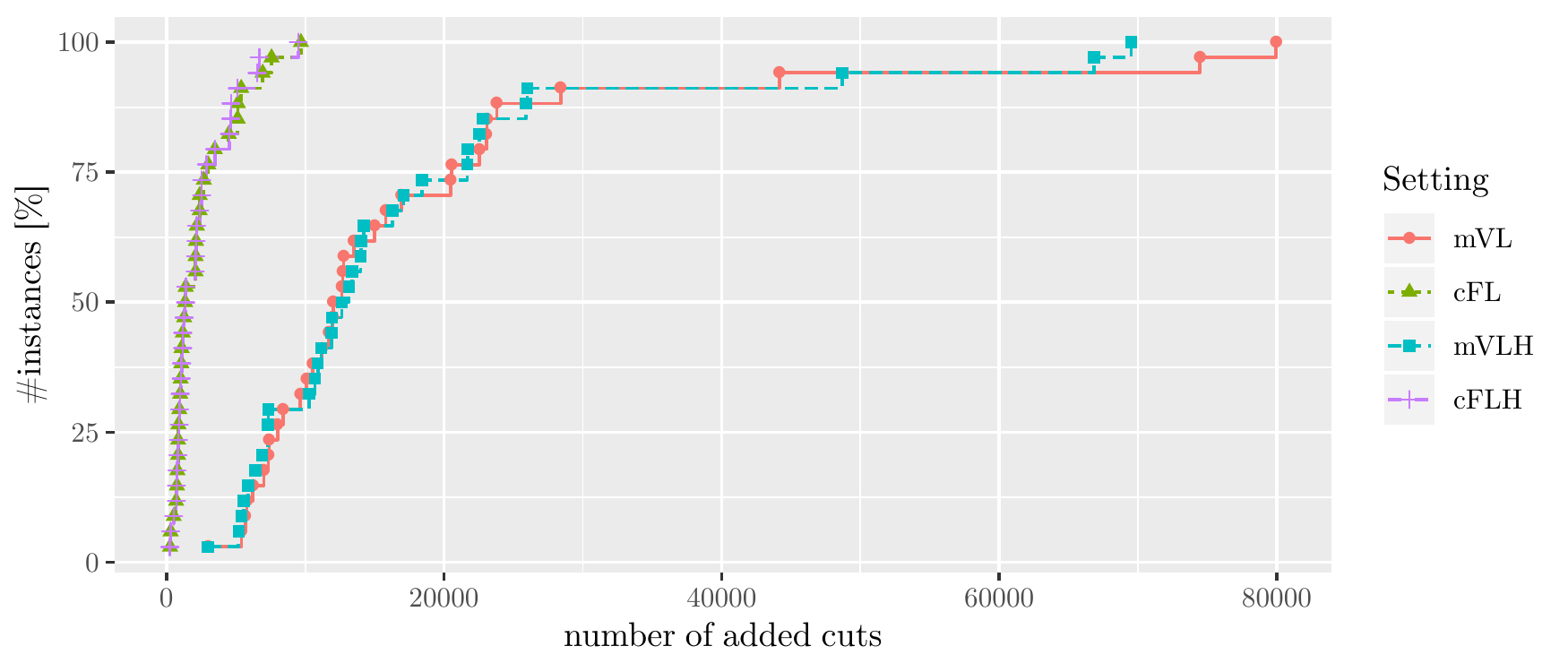}
		\caption{\blue{$p=5$, 34 instances \label{fig:numcutsb}}}
	\end{subfigure}

	\caption{\blue{Plots for the number of added inequalities 
	\eqref{eq:loptimality} for all \texttt{TSPLIB} instances which were  
	solved to optimality with all considered settings}}
\end{figure}

\section{Conclusions} 
\label{sec:conclusion}

In this paper, we present a novel solution approach for the  the $p$-center problem, which is a fundamental problem in location science. The solution approach works on a new integer programming formulation that can be obtained by a projection from the classical 
formulation. The formulation is solved by means of branch-and-cut, where cuts 
for customer demand points are iteratively generated. This makes the method 
suitable for large scale instances. Moreover, the formulation can be 
strengthened with combinatorial information to obtain a much tighter 
LP-relaxation. In particular, we present a novel way to use lower bound 
information to obtain stronger cuts. We show that the obtained formulation has 
the same lower bound as the best known relaxations from the literature 
\citep{elloumi2004, calik2013double}. In contrast to these bounds \blue{from 
the literature}, which are based on semi-relaxations, i.e., some variables are 
kept 
integer in the relaxation, our relaxation only contains continuous variables. 
Moreover, our approach can be easily implemented in a single branch-and-cut 
algorithm, while existing state-of-the-art approaches for the $p$-center 
problem consist of iterative solution algorithms, where set cover problems are 
solved repeatedly \citep{chen2009,contardo2019scalable}. We describe different 
strategies on how to separate our lifted cuts, and also present a primal 
heuristic which uses the information provided by the linear programming 
relaxation. The solution algorithm is made available online.

In order to assess the efficiency of our proposed approach, we conducted a 
computational study on instances \blue{from the literature} with up to more than 700000 
customers and locations. The results show that for many instances, our solution 
algorithm is competitive with highly sophisticated solution frameworks 
\blue{from the literature}.

There are various avenues for further work. \blue{Further research regarding potential additional valid inequalities to strengthen the LP-relaxation could be interesting.} \blue{Another} direction could be the 
development of better primal heuristics to be used within the branch-and-cut 
algorithm, as for many instances our lower bound was quite good, while the 
primal bound still had some room for improvement. However, the large-scale 
nature of the considered instances may make this a challenging task, as even 
applying simple local search operator could become quite time consuming. A 
hybridization of our approach with existing set cover-based approaches could 
also be interesting, because the results show that for some instances our 
approach 
works better, while for other instances set-cover-based approaches work 
better. \blue{The investigation if preprocessing procedures from set-cover-based approaches can be transferred to our approach could also be a fruitful topic.}
It could also be worthwhile to investigate, if there are certain 
instance-characteristics which may favor a particular approach. Moreover, as 
our solution approach directly models the problem with optimality-type cuts, 
our approach could be more suitable when considering stochastic or robust 
versions of the \PCP, compared to the set cover-based approaches, which may no 
be easily adaptable to such settings.

\section*{Acknowledgments}

The research was supported by the Linz Institute of Technology (Project 
LIT-2019-7-YOU-211) and the JKU Business School.

\FloatBarrier

\ifArXiV
\bibliographystyle{elsarticle-harv}
\bibliography{biblio}

\else

\fi

\clearpage

\appendix

\section{Detailed results for the instance set \texttt{TSPlib}}

In the following Tables \ref{ta:tsplib2}-\ref{ta:tsplib30} we provide a 
comparison with the results obtained by \cite{contardo2019scalable} using an 
Intel Xeon E5462 with 2.8 Ghz and 16GB RAM. In their runs, the 
timelimit was set to 24 hours (indicated by TL2 in the tables), while we have a 
timelimit of 30 minutes (indicated by TL in the tables).
Whenever at least one of the two versions finished, the faster runtime is 
bold. If both versions did not finish, the better upper and the better lower 
bound is bold.

\begin{table}[ht]
	\centering
	\caption{Detailed results for the \texttt{TSPlib} instances with 
	$p=2$\label{ta:tsplib2}} 
	\begingroup\footnotesize
	\begin{tabular}{lr|rrrrr|rrr}
		\toprule
		& & \multicolumn{5}{|c|}{$\fCLH$} & 
		\multicolumn{3}{|c}{\cite{contardo2019scalable}} \\ name & $|V|$  & LB 
		& UB & $g[\%]$  & \#BC & $t[s]$ & LB & UB & $t[s]$ \\ \midrule
		rw1621 & 1621 & 714 & 714 & 0.00 & 0 & \textbf{  0.11} & 714 & 714 &   9.50 \\ 
		u1817 & 1817 & 1061 & 1061 & 0.00 & 0 & \textbf{  0.20} & 1061 & 1061 &  12.10 \\ 
		rl1889 & 1889 & 6931 & 6931 & 0.00 & 0 & \textbf{  0.16} & 6931 & 6931 &  10.20 \\ 
		mu1979 & 1979 & 3284 & 3284 & 0.00 & 0 & \textbf{  0.09} & 3284 & 3284 &  10.50 \\ 
		pr2392 & 2392 & 6060 & 6060 & 0.00 & 0 & \textbf{  0.28} & 6060 & 6060 &  13.10 \\ 
		d15112-modif-2500 & 2500 & 9234 & 9234 & 0.00 & 0 & \textbf{  0.30} & 9234 & 9234 &  12.70 \\ 
		pcb3038 & 3038 & 1734 & 1734 & 0.00 & 0 & \textbf{  0.27} & 1734 & 1734 &  10.00 \\ 
		nu3496 & 3496 & 2140 & 2140 & 0.00 & 0 & \textbf{  0.21} & 2140 & 2140 &  10.20 \\ 
		ca4663 & 4663 & 25781 & 25781 & 0.00 & 0 & \textbf{  0.53} & 25781 & 25781 &  17.90 \\ 
		rl5915 & 5915 & 7385 & 7385 & 0.00 & 0 & \textbf{  0.52} & 7385 & 7385 &  17.00 \\ 
		rl5934 & 5934 & 7004 & 7004 & 0.00 & 0 & \textbf{  0.97} & 7004 & 7004 &  20.50 \\ 
		tz6117 & 6117 & 4582 & 4582 & 0.00 & 0 & \textbf{  0.85} & 4582 & 4582 &  15.80 \\ 
		eg7146 & 7146 & 4643 & 4643 & 0.00 & 0 & \textbf{  0.49} & 4643 & 4643 &  14.20 \\ 
		pla7397 & 7397 & 310664 & 310664 & 0.00 & 0 & \textbf{  0.43} & 310664 & 310664 &  15.30 \\ 
		ym7663 & 7663 & 3807 & 3807 & 0.00 & 0 & \textbf{  0.25} & 3807 & 3807 &  12.50 \\ 
		pm8079 & 8079 & 1686 & 1686 & 0.00 & 0 & \textbf{  0.45} & 1686 & 1686 &  13.40 \\ 
		ei8246 & 8246 & 1813 & 1813 & 0.00 & 0 & \textbf{  0.57} & 1813 & 1813 &  18.30 \\ 
		ar9152 & 9152 & 9725 & 9725 & 0.00 & 0 & \textbf{  0.68} & 9725 & 9725 &  20.50 \\ 
		ja9847 & 9847 & 10099 & 10099 & 0.00 & 0 & \textbf{  0.30} & 10099 & 10099 &  18.70 \\ 
		gr9882 & 9882 & 3757 & 3757 & 0.00 & 0 & \textbf{  0.71} & 3757 & 3757 &  13.70 \\ 
		kz9976 & 9976 & 11325 & 11325 & 0.00 & 0 & \textbf{  1.91} & 11325 & 11325 &  17.50 \\ 
		fi10639 & 10639 & 4922 & 4922 & 0.00 & 0 & \textbf{  0.79} & 4922 & 4922 &  16.70 \\ 
		rl11849 & 11849 & 7298 & 7298 & 0.00 & 0 & \textbf{  1.11} & 7298 & 7298 &  21.20 \\ 
		usa13509 & 13509 & 175750 & 175750 & 0.00 & 0 & \textbf{  0.86} & 175750 & 175750 &  17.90 \\ 
		brd14051 & 14051 & 2970 & 2970 & 0.00 & 0 & \textbf{  1.38} & 2970 & 2970 &  15.20 \\ 
		mo14185 & 14185 & 3698 & 3698 & 0.00 & 0 & \textbf{  1.27} & 3698 & 3698 &  16.20 \\ 
		ho14473 & 14473 & 1834 & 1834 & 0.00 & 0 & \textbf{  0.96} & 1834 & 1834 &  15.80 \\ 
		d15112 & 15112 & 9406 & 9406 & 0.00 & 0 & \textbf{  2.05} & 9406 & 9406 &  17.40 \\ 
		it16862 & 16862 & 4677 & 4677 & 0.00 & 0 & \textbf{  1.01} & 4677 & 4677 &  16.80 \\ 
		d18512 & 18512 & 3301 & 3301 & 0.00 & 0 & \textbf{  2.51} & 3301 & 3301 &  16.40 \\ 
		vm22775 & 22775 & 4266 & 4266 & 0.00 & 0 & \textbf{  1.59} & 4266 & 4266 &  20.50 \\ 
		sw24978 & 24978 & 4960 & 4960 & 0.00 & 0 & \textbf{  1.91} & 4960 & 4960 &  25.10 \\ 
		fyg28534 & 28534 & 448 & 448 & 0.00 & 0 & \textbf{  3.00} & 448 & 448 &  18.50 \\ 
		bm33708 & 33708 & 5167 & 5167 & 0.00 & 0 & \textbf{  2.92} & 5167 & 5167 &  21.60 \\ 
		pla33810 & 33810 & 330462 & 330462 & 0.00 & 0 & \textbf{  6.95} & 330462 & 330462 &  64.30 \\ 
		bby34656 & 34656 & 474 & 474 & 0.00 & 0 & \textbf{  2.99} & 474 & 474 &  15.10 \\ 
		pba38478 & 38478 & 469 & 469 & 0.00 & 0 & \textbf{  2.90} & 469 & 469 &  25.50 \\ 
		ch71009 & 71009 & 19988 & 19988 & 0.00 & 0 & \textbf{  5.63} & 19988 & 19988 &  27.10 \\ 
		pla85900 & 85900 & 436008 & 436008 & 0.00 & 0 & \textbf{ 21.56} & 436008 & 436008 & 136.10 \\ 
		sra104815 & 104814 & 908 & 908 & 0.00 & 0 & \textbf{ 16.21} & 908 & 908 &  51.10 \\ 
		usa115475 & 115475 & 17745 & 17745 & 0.00 & 0 & \textbf{ 10.22} & 17745 & 17745 &  47.90 \\ 
		ara238025 & 238025 & 1484 & 1484 & 0.00 & 0 & \textbf{ 39.25} & 1484 & 1484 &  89.80 \\ 
		lra498378 & 498378 & 5888 & 5888 & 0.00 & 0 & \textbf{ 81.50} & 5888 & 5888 & 423.70 \\ 
		lrb744710 & 744710 & 1930 & 1930 & 0.00 & 0 & \textbf{127.01} & 1930 & 1930 & 241.00 \\ 
		\bottomrule
	\end{tabular}
	\endgroup
\end{table}
\begin{table}[ht]
	\centering
	\caption{Detailed results for the \texttt{TSPlib} instances with 
	$p=3$\label{ta:tsplib3}} 
	\begingroup\footnotesize
	\begin{tabular}{lr|rrrrr|rrr}
		\toprule
		& & \multicolumn{5}{|c|}{$\fCLH$} & 
		\multicolumn{3}{|c}{\cite{contardo2019scalable}} \\ name & $|V|$  & LB 
		& UB & $g[\%]$  & \#BC & $t[s]$ & LB & UB & $t[s]$ \\ \midrule
		rw1621 & 1621 & 624 & 624 & 0.00 & 0 & \textbf{  0.18} & 624 & 624 &  10.80 \\ 
		u1817 & 1817 & 895 & 895 & 0.00 & 0 & \textbf{  0.23} & 895 & 895 &   9.90 \\ 
		rl1889 & 1889 & 6066 & 6066 & 0.00 & 0 & \textbf{  0.35} & 6066 & 6066 &  12.30 \\ 
		mu1979 & 1979 & 2326 & 2326 & 0.00 & 0 & \textbf{  0.14} & 2326 & 2326 &  10.10 \\ 
		pr2392 & 2392 & 5413 & 5413 & 0.00 & 0 & \textbf{  1.36} & 5413 & 5413 &  17.10 \\ 
		d15112-modif-2500 & 2500 & 7952 & 7952 & 0.00 & 0 & \textbf{  0.38} & 7952 & 7952 &  13.10 \\ 
		pcb3038 & 3038 & 1519 & 1519 & 0.00 & 0 & \textbf{  0.61} & 1519 & 1519 &  12.40 \\ 
		nu3496 & 3496 & 1513 & 1513 & 0.00 & 0 & \textbf{  0.62} & 1513 & 1513 &  12.60 \\ 
		ca4663 & 4663 & 22680 & 22680 & 0.00 & 2 & \textbf{  1.56} & 22680 & 22680 &  20.70 \\ 
		rl5915 & 5915 & 6377 & 6377 & 0.00 & 0 & \textbf{  1.62} & 6377 & 6377 &  18.50 \\ 
		rl5934 & 5934 & 6005 & 6005 & 0.00 & 0 & \textbf{  1.29} & 6005 & 6005 &  16.80 \\ 
		tz6117 & 6117 & 4171 & 4171 & 0.00 & 0 & \textbf{  3.06} & 4171 & 4171 &  21.60 \\ 
		eg7146 & 7146 & 3702 & 3702 & 0.00 & 0 & \textbf{  0.55} & 3702 & 3702 &  15.30 \\ 
		pla7397 & 7397 & 279242 & 279242 & 0.00 & 0 & \textbf{  2.36} & 279242 & 279242 &  32.50 \\ 
		ym7663 & 7663 & 3107 & 3107 & 0.00 & 0 & \textbf{  0.67} & 3107 & 3107 &  17.10 \\ 
		pm8079 & 8079 & 1382 & 1382 & 0.00 & 0 & \textbf{  0.62} & 1382 & 1382 &  14.50 \\ 
		ei8246 & 8246 & 1522 & 1522 & 0.00 & 0 & \textbf{  1.34} & 1522 & 1522 &  15.90 \\ 
		ar9152 & 9152 & 8196 & 8196 & 0.00 & 0 & \textbf{  1.23} & 8196 & 8196 &  24.80 \\ 
		ja9847 & 9847 & 7872 & 7872 & 0.00 & 0 & \textbf{  0.64} & 7872 & 7872 &  13.50 \\ 
		gr9882 & 9882 & 3170 & 3170 & 0.00 & 0 & \textbf{  1.11} & 3170 & 3170 &  14.90 \\ 
		kz9976 & 9976 & 8244 & 8244 & 0.00 & 0 & \textbf{  1.02} & 8244 & 8244 &  16.10 \\ 
		fi10639 & 10639 & 3742 & 3742 & 0.00 & 0 & \textbf{  1.69} & 3742 & 3742 &  13.50 \\ 
		rl11849 & 11849 & 6452 & 6452 & 0.00 & 0 & \textbf{  5.10} & 6452 & 6452 &  24.20 \\ 
		usa13509 & 13509 & 134489 & 134489 & 0.00 & 0 & \textbf{  1.18} & 134489 & 134489 &  23.50 \\ 
		brd14051 & 14051 & 2426 & 2426 & 0.00 & 0 & \textbf{  2.14} & 2426 & 2426 &  17.90 \\ 
		mo14185 & 14185 & 2992 & 2992 & 0.00 & 0 & \textbf{  1.71} & 2992 & 2992 &  16.70 \\ 
		ho14473 & 14473 & 1534 & 1534 & 0.00 & 0 & \textbf{  1.81} & 1534 & 1534 &  19.00 \\ 
		d15112 & 15112 & 8154 & 8154 & 0.00 & 0 & \textbf{  5.08} & 8154 & 8154 &  32.60 \\ 
		it16862 & 16862 & 3724 & 3724 & 0.00 & 0 & \textbf{  1.09} & 3724 & 3724 &  20.50 \\ 
		d18512 & 18512 & 2914 & 2914 & 0.00 & 0 & \textbf{  6.41} & 2914 & 2914 &  30.20 \\ 
		vm22775 & 22775 & 3135 & 3135 & 0.00 & 0 & \textbf{  2.86} & 3135 & 3135 &  21.80 \\ 
		sw24978 & 24978 & 4120 & 4120 & 0.00 & 0 & \textbf{  2.99} & 4120 & 4120 &  24.20 \\ 
		fyg28534 & 28534 & 400 & 400 & 0.00 & 0 & \textbf{ 10.90} & 400 & 400 &  34.60 \\ 
		bm33708 & 33708 & 4213 & 4213 & 0.00 & 0 & \textbf{  7.25} & 4213 & 4213 &  35.10 \\ 
		pla33810 & 33810 & 302160 & 302160 & 0.00 & 0 & \textbf{ 31.67} & 302160 & 302160 & 154.70 \\ 
		bby34656 & 34656 & 420 & 420 & 0.00 & 0 & \textbf{ 11.43} & 420 & 420 &  43.30 \\ 
		pba38478 & 38478 & 413 & 413 & 0.00 & 0 & \textbf{  9.88} & 413 & 413 &  33.20 \\ 
		ch71009 & 71009 & 13292 & 13292 & 0.00 & 0 & \textbf{ 13.96} & 13292 & 13292 &  59.90 \\ 
		pla85900 & 85900 & 399677 & 399677 & 0.00 & 0 & \textbf{127.69} & 399677 & 399677 & 577.30 \\ 
		sra104815 & 104814 & 688 & 688 & 0.00 & 0 & \textbf{ 35.93} & 688 & 688 &  97.70 \\ 
		usa115475 & 115475 & 13530 & 13530 & 0.00 & 0 & \textbf{ 13.33} & 13530 & 13530 &  64.30 \\ 
		ara238025 & 238025 & 1166 & 1166 & 0.00 & 0 & \textbf{ 53.05} & 1166 & 1166 & 153.30 \\ 
		lra498378 & 498378 & 4995 & 4995 & 0.00 & 0 & \textbf{260.76} & 4995 & 4995 & 689.40 \\ 
		lrb744710 & 744710 & 1702 & 1702 & 0.00 & 0 & \textbf{284.90} & 1702 & 1702 & 567.10 \\ 
		\bottomrule
	\end{tabular}
	\endgroup
\end{table}
\begin{table}[ht]
	\centering
	\caption{Detailed results for the \texttt{TSPlib} instances with 
	$p=5$\label{ta:tsplib5}} 
	\begingroup\footnotesize
	\begin{tabular}{lr|rrrrr|rrr}
		\toprule
		& & \multicolumn{5}{|c|}{$\fCLH$} & 
		\multicolumn{3}{|c}{\cite{contardo2019scalable}} \\ name & $|V|$  & LB 
		& UB & $g[\%]$  & \#BC & $t[s]$ & LB & UB & $t[s]$ \\ \midrule
		rw1621 & 1621 & 414 & 414 & 0.00 & 0 & \textbf{     0.33} & 414 & 414 &   12.50 \\ 
		u1817 & 1817 & 715 & 715 & 0.00 & 0 & \textbf{     1.22} & 715 & 715 &   15.90 \\ 
		rl1889 & 1889 & 4792 & 4792 & 0.00 & 0 & \textbf{     1.74} & 4792 & 4792 &   16.30 \\ 
		mu1979 & 1979 & 1877 & 1877 & 0.00 & 0 & \textbf{     0.56} & 1877 & 1877 &   14.90 \\ 
		pr2392 & 2392 & 3827 & 3827 & 0.00 & 0 & \textbf{     4.51} & 3827 & 3827 &   33.90 \\ 
		d15112-modif-2500 & 2500 & 5856 & 5856 & 0.00 & 0 & \textbf{     3.44} & 5856 & 5856 &   16.90 \\ 
		pcb3038 & 3038 & 1064 & 1064 & 0.00 & 0 & \textbf{     1.79} & 1064 & 1064 &   21.20 \\ 
		nu3496 & 3496 & 1123 & 1123 & 0.00 & 0 & \textbf{     1.11} & 1123 & 1123 &   14.60 \\ 
		ca4663 & 4663 & 16837 & 16837 & 0.00 & 0 & \textbf{     1.51} & 16837 & 16837 &   16.10 \\ 
		rl5915 & 5915 & 4554 & 4554 & 0.00 & 0 & \textbf{     5.30} & 4554 & 4554 &   27.00 \\ 
		rl5934 & 5934 & 4792 & 4792 & 0.00 & 0 & \textbf{     4.85} & 4792 & 4792 &   33.40 \\ 
		tz6117 & 6117 & 2918 & 2918 & 0.00 & 0 & \textbf{     8.45} & 2918 & 2918 &   37.10 \\ 
		eg7146 & 7146 & 2590 & 2590 & 0.00 & 0 & \textbf{     2.27} & 2590 & 2590 &   21.90 \\ 
		pla7397 & 7397 & 174542 & 174542 & 0.00 & 0 & \textbf{     2.22} & 174542 & 174542 &   32.70 \\ 
		ym7663 & 7663 & 2043 & 2043 & 0.00 & 0 & \textbf{     0.95} & 2043 & 2043 &   14.00 \\ 
		pm8079 & 8079 & 938 & 938 & 0.00 & 0 & \textbf{     2.04} & 938 & 938 &   18.40 \\ 
		ei8246 & 8246 & 1042 & 1042 & 0.00 & 0 & \textbf{     2.71} & 1042 & 1042 &   18.00 \\ 
		ar9152 & 9152 & 6752 & 6752 & 0.00 & 0 & \textbf{     3.29} & 6752 & 6752 &   21.40 \\ 
		ja9847 & 9847 & 4503 & 4503 & 0.00 & 0 & \textbf{     0.98} & 4503 & 4503 &   18.60 \\ 
		gr9882 & 9882 & 2151 & 2151 & 0.00 & 0 & \textbf{     6.48} & 2151 & 2151 &   37.00 \\ 
		kz9976 & 9976 & 5995 & 5995 & 0.00 & 0 & \textbf{     4.70} & 5995 & 5995 &   29.70 \\ 
		fi10639 & 10639 & 2739 & 2739 & 0.00 & 0 & \textbf{     6.88} & 2739 & 2739 &   23.30 \\ 
		rl11849 & 11849 & 4873 & 4873 & 0.00 & 0 & \textbf{    20.29} & 4873 & 4873 &   53.70 \\ 
		usa13509 & 13509 & 103671 & 103671 & 0.00 & 0 & \textbf{     9.52} & 103671 & 103671 &   53.40 \\ 
		brd14051 & 14051 & 1822 & 1822 & 0.00 & 0 & \textbf{     4.61} & 1822 & 1822 &   22.70 \\ 
		mo14185 & 14185 & 2143 & 2143 & 0.00 & 0 & \textbf{     4.86} & 2143 & 2143 &   30.30 \\ 
		ho14473 & 14473 & 1112 & 1112 & 0.00 & 0 & \textbf{     4.39} & 1112 & 1112 &   20.70 \\ 
		d15112 & 15112 & 5890 & 5890 & 0.00 & 0 & \textbf{    42.21} & 5890 & 5890 &   77.40 \\ 
		it16862 & 16862 & 2811 & 2811 & 0.00 & 0 & \textbf{     3.13} & 2811 & 2811 &   33.10 \\ 
		d18512 & 18512 & 2073 & 2073 & 0.00 & 0 & \textbf{    65.95} & 2073 & 2073 &   68.40 \\ 
		vm22775 & 22775 & 2269 & 2269 & 0.00 & 0 & \textbf{     6.20} & 2269 & 2269 &   41.70 \\ 
		sw24978 & 24978 & 3022 & 3022 & 0.00 & 0 & \textbf{    14.66} & 3022 & 3022 &   45.90 \\ 
		fyg28534 & 28534 & 261 & 261 & 0.00 & 0 & \textbf{    64.21} & 261 & 261 &  164.00 \\ 
		bm33708 & 33708 & 2747 & 2747 & 0.00 & 0 & \textbf{     9.95} & 2747 & 2747 &   53.80 \\ 
		pla33810 & 33810 & 203597 & 203597 & 0.00 & 0 & \textbf{   179.93} & 203597 & 203597 &  769.50 \\ 
		bby34656 & 34656 & 286 & 286 & 0.00 & 0 & \textbf{   118.60} & 286 & 286 &  256.70 \\ 
		pba38478 & 38478 & 313 & 313 & 0.00 & 0 & \textbf{    71.79} & 313 & 313 &  174.10 \\ 
		ch71009 & 71009 & 10944 & 10944 & 0.00 & 11 & \textbf{   200.89} & 10944 & 10944 &  269.30 \\ 
		pla85900 & 85900 & 269544 & 269544 & 0.00 & 0 & \textbf{  1030.73} & 269544 & 269544 & 3212.00 \\ 
		sra104815 & 104814 & 508 & 508 & 0.00 & 0 & \textbf{   115.84} & 508 & 508 &  237.20 \\ 
		usa115475 & 115475 & 10414 & 10414 & 0.00 & 0 & \textbf{   196.31} & 10414 & 10414 &  562.20 \\ 
		ara238025 & 238025 & 855 & 855 & 0.00 & 5 & \textbf{   406.24} & 855 & 855 &  578.20 \\ 
		lra498378 & 498378 & 3259 & 3259 & 0.00 & 0 & \textbf{   561.74} & 3259 & 3259 & 2250.30 \\ 
		lrb744710 & 744710 & 1167 & 1176 & 0.77 & 0 & TL & 1170 & 1170 & \textbf{4781.70} \\ 
		\bottomrule
	\end{tabular}
	\endgroup
\end{table}
\begin{table}[ht]
	\centering
	\caption{Detailed results for the \texttt{TSPlib} instances with 
	$p=10$\label{ta:tsplib10}} 
	\begingroup\footnotesize
	\begin{tabular}{lr|rrrrr|rrr}
		\toprule
		& & \multicolumn{5}{|c|}{$\fCLH$} & 
		\multicolumn{3}{|c}{\cite{contardo2019scalable}} \\ name & $|V|$  & LB 
		& UB & $g[\%]$  & \#BC & $t[s]$ & LB & UB & $t[s]$ \\ \midrule
		rw1621 & 1621 & 285 & 285 & 0.00 & 0 & \textbf{     1.21} & 285 & 285 &     15.80 \\ 
		u1817 & 1817 & 458 & 458 & 0.00 & 6 & \textbf{    16.71} & 458 & 458 &     42.30 \\ 
		rl1889 & 1889 & 3078 & 3101 & 0.74 & 584 & TL & 3101 & 3101 & \textbf{   151.80} \\ 
		mu1979 & 1979 & 1161 & 1161 & 0.00 & 0 & \textbf{     0.61} & 1161 & 1161 &     14.60 \\ 
		pr2392 & 2392 & 2581 & 2581 & 0.00 & 550 &   1001.96 & 2581 & 2581 & \textbf{   166.10} \\ 
		d15112-modif-2500 & 2500 & 3705 & 3705 & 0.00 & 45 &     49.04 & 3705 & 3705 & \textbf{    45.90} \\ 
		pcb3038 & 3038 & 729 & 729 & 0.00 & 169 & \textbf{   133.16} & 729 & 729 &    175.40 \\ 
		nu3496 & 3496 & 757 & 757 & 0.00 & 0 & \textbf{     4.59} & 757 & 757 &     20.70 \\ 
		ca4663 & 4663 & 10499 & 10499 & 0.00 & 0 & \textbf{     2.15} & 10499 & 10499 &     27.00 \\ 
		rl5915 & 5915 & 3104 & 3262 & 4.84 & 52 & TL & 3137 & 3137 & \textbf{   361.80} \\ 
		rl5934 & 5934 & 3078 & 3143 & 2.07 & 105 & TL & 3092 & 3092 & \textbf{   337.50} \\ 
		tz6117 & 6117 & 1902 & 1902 & 0.00 & 0 & \textbf{    89.14} & 1902 & 1902 &    180.30 \\ 
		eg7146 & 7146 & 1826 & 1826 & 0.00 & 0 & \textbf{     3.35} & 1826 & 1826 &     35.30 \\ 
		pla7397 & 7397 & 121968 & 121968 & 0.00 & 0 & \textbf{    21.07} & 121968 & 121968 &    104.80 \\ 
		ym7663 & 7663 & 1447 & 1447 & 0.00 & 0 & \textbf{     5.87} & 1447 & 1447 &     36.60 \\ 
		pm8079 & 8079 & 653 & 653 & 0.00 & 0 & \textbf{     6.36} & 653 & 653 &     27.40 \\ 
		ei8246 & 8246 & 722 & 722 & 0.00 & 5 & \textbf{   114.22} & 722 & 722 &    225.20 \\ 
		ar9152 & 9152 & 4273 & 4273 & 0.00 & 54 &    218.28 & 4273 & 4273 & \textbf{   117.70} \\ 
		ja9847 & 9847 & 2766 & 2766 & 0.00 & 0 & \textbf{     3.48} & 2766 & 2766 &     28.50 \\ 
		gr9882 & 9882 & 1372 & 1372 & 0.00 & 0 & \textbf{     8.71} & 1372 & 1372 &     54.40 \\ 
		kz9976 & 9976 & 4155 & 4155 & 0.00 & 0 & \textbf{    13.12} & 4155 & 4155 &     61.80 \\ 
		fi10639 & 10639 & 1855 & 1855 & 0.00 & 0 & \textbf{    37.92} & 1855 & 1855 &     97.40 \\ 
		rl11849 & 11849 & 3158 & 3179 & 0.66 & 31 & TL & 3164 & 3164 & \textbf{   385.00} \\ 
		usa13509 & 13509 & 67075 & 67075 & 0.00 & 30 & \textbf{   465.62} & 67075 & 67075 &    503.80 \\ 
		brd14051 & 14051 & 1257 & 1265 & 0.63 & 30 & TL & 1265 & 1265 & \textbf{   181.20} \\ 
		mo14185 & 14185 & 1405 & 1405 & 0.00 & 0 & \textbf{    65.60} & 1405 & 1405 &    211.60 \\ 
		ho14473 & 14473 & 738 & 738 & 0.00 & 6 &    140.97 & 738 & 738 & \textbf{   119.50} \\ 
		d15112 & 15112 & 3760 & 4005 & 6.12 & 5 & TL & 3785 & 3785 & \textbf{   599.70} \\ 
		it16862 & 16862 & 1574 & 1574 & 0.00 & 0 & \textbf{    16.45} & 1574 & 1574 &     87.80 \\ 
		d18512 & 18512 & 1332 & 1371 & 2.84 & 94 & TL & 1340 & 1340 & \textbf{  2060.10} \\ 
		vm22775 & 22775 & 1315 & 1315 & 0.00 & 0 & \textbf{    31.50} & 1315 & 1315 &    134.30 \\ 
		sw24978 & 24978 & 2061 & 2061 & 0.00 & 0 & \textbf{   103.94} & 2061 & 2061 &    155.90 \\ 
		fyg28534 & 28534 & 175 & 176 & 0.57 & 36 & TL & 176 & 176 & \textbf{  3137.30} \\ 
		bm33708 & 33708 & 1907 & 1907 & 0.00 & 0 &   1063.64 & 1907 & 1907 & \textbf{  1015.40} \\ 
		pla33810 & 33810 & 135441 & 168992 & 19.85 & 0 & TL & 136517 & 136517 & \textbf{  6825.70} \\ 
		bby34656 & 34656 & 192 & 197 & 2.54 & 2 & TL & 192 & 192 & \textbf{  3449.20} \\ 
		pba38478 & 38478 & 206 & 232 & 11.21 & 2 & TL & 207 & 207 & \textbf{  6831.70} \\ 
		ch71009 & 71009 & 7162 & 7599 & 5.75 & 0 & TL & 7183 & 7183 & \textbf{  1595.20} \\ 
		pla85900 & 85900 & 176333 & 200715 & 12.15 & 0 & TL & 180497 & 180497 & \textbf{ 19575.70} \\ 
		sra104815 & 104814 & 349 & 373 & 6.43 & 0 & TL & 354 & 354 & \textbf{  7059.50} \\ 
		usa115475 & 115475 & 6700 & 6866 & 2.42 & 0 & TL & 6736 & 6736 & \textbf{  4784.00} \\ 
		ara238025 & 238025 & 549 & 573 & 4.19 & 0 & TL & 552 & 552 & \textbf{  7953.80} \\ 
		lra498378 & 498378 & 2160 & 2468 & 12.48 & 0 & TL & 2232 & 2232 & \textbf{ 16623.10} \\ 
		lrb744710 & 744710 & 748 & 942 & 20.59 & 0 & TL & \textbf{795} & \textbf{810} & TL2 \\ 
		\bottomrule
	\end{tabular}
	\endgroup
\end{table}
\begin{table}[ht]
	\centering
	\caption{Detailed results for the \texttt{TSPlib} instances with 
	$p=15$\label{ta:tsplib15}} 
	\begingroup\footnotesize
	\begin{tabular}{lr|rrrrr|rrr}
		\toprule
		& & \multicolumn{5}{|c|}{$\fCLH$} & 
		\multicolumn{3}{|c}{\cite{contardo2019scalable}} \\ name & $|V|$  & LB 
		& UB & $g[\%]$  & \#BC & $t[s]$ & LB & UB & $t[s]$ \\ \midrule
		rw1621 & 1621 & 217 & 217 & 0.00 & 8 & \textbf{     6.62} & 217 & 217 &     27.50 \\ 
		u1817 & 1817 & 359 & 359 & 0.00 & 737 &   1366.66 & 359 & 359 & \textbf{   386.20} \\ 
		rl1889 & 1889 & 2384 & 2384 & 0.00 & 264 &   1146.25 & 2384 & 2384 & \textbf{   181.60} \\ 
		mu1979 & 1979 & 868 & 868 & 0.00 & 0 & \textbf{     0.92} & 868 & 868 &     17.50 \\ 
		pr2392 & 2392 & 2011 & 2110 & 4.69 & 394 & TL & 2039 & 2039 & \textbf{  1581.30} \\ 
		d15112-modif-2500 & 2500 & 2972 & 2972 & 0.00 & 30 & \textbf{    95.50} & 2972 & 2972 &    110.70 \\ 
		pcb3038 & 3038 & 567 & 701 & 19.12 & 276 & TL & 578 & 578 & \textbf{  3474.90} \\ 
		nu3496 & 3496 & 604 & 604 & 0.00 & 12 & \textbf{    31.38} & 604 & 604 &     54.40 \\ 
		ca4663 & 4663 & 8296 & 8296 & 0.00 & 0 & \textbf{     3.80} & 8296 & 8296 &     36.20 \\ 
		rl5915 & 5915 & 2403 & 2768 & 13.19 & 186 & TL & 2441 & 2441 & \textbf{  6266.20} \\ 
		rl5934 & 5934 & 2392 & 2393 & 0.04 & 83 & TL & 2393 & 2393 & \textbf{   821.20} \\ 
		tz6117 & 6117 & 1508 & 1727 & 12.68 & 459 & TL & 1528 & 1528 & \textbf{   764.70} \\ 
		eg7146 & 7146 & 1308 & 1308 & 0.00 & 0 & \textbf{     3.81} & 1308 & 1308 &     44.70 \\ 
		pla7397 & 7397 & 95270 & 95270 & 0.00 & 0 & \textbf{   186.73} & 95270 & 95270 &    217.50 \\ 
		ym7663 & 7663 & 1054 & 1054 & 0.00 & 0 & \textbf{    16.26} & 1054 & 1054 &     67.60 \\ 
		pm8079 & 8079 & 517 & 517 & 0.00 & 0 & \textbf{     9.59} & 517 & 517 &     52.30 \\ 
		ei8246 & 8246 & 575 & 655 & 12.21 & 265 & TL & 579 & 579 & \textbf{  2268.70} \\ 
		ar9152 & 9152 & 3250 & 3284 & 1.04 & 9 & TL & 3250 & 3250 & \textbf{   397.70} \\ 
		ja9847 & 9847 & 1990 & 1990 & 0.00 & 0 & \textbf{     9.36} & 1990 & 1990 &     65.10 \\ 
		gr9882 & 9882 & 1038 & 1038 & 0.00 & 0 & \textbf{    12.58} & 1038 & 1038 &     74.40 \\ 
		kz9976 & 9976 & 3261 & 3261 & 0.00 & 0 & \textbf{   169.08} & 3261 & 3261 &    433.10 \\ 
		fi10639 & 10639 & 1456 & 1484 & 1.89 & 182 & TL & 1461 & 1461 & \textbf{  1030.10} \\ 
		rl11849 & 11849 & 2424 & 3129 & 22.53 & 10 & TL & 2462 & 2462 & \textbf{ 14808.20} \\ 
		usa13509 & 13509 & 51476 & 57780 & 10.91 & 6 & TL & 52178 & 52178 & \textbf{  4740.90} \\ 
		brd14051 & 14051 & 954 & 1058 & 9.83 & 248 & TL & 966 & 966 & \textbf{  1832.50} \\ 
		mo14185 & 14185 & 1110 & 1166 & 4.80 & 166 & TL & 1118 & 1118 & \textbf{   692.60} \\ 
		ho14473 & 14473 & 577 & 647 & 10.82 & 105 & TL & 585 & 585 & \textbf{   826.10} \\ 
		d15112 & 15112 & 3008 & 3239 & 7.13 & 0 & TL & 3037 & 3037 & \textbf{ 27897.80} \\ 
		it16862 & 16862 & 1237 & 1237 & 0.00 & 20 &    257.02 & 1237 & 1237 & \textbf{   142.10} \\ 
		d18512 & 18512 & 1066 & 1119 & 4.74 & 0 & TL & 1075 & 1075 & \textbf{ 12403.90} \\ 
		vm22775 & 22775 & 1099 & 1099 & 0.00 & 0 & \textbf{    33.74} & 1099 & 1099 &    265.60 \\ 
		sw24978 & 24978 & 1631 & 1775 & 8.11 & 19 & TL & 1637 & 1637 & \textbf{  1751.50} \\ 
		fyg28534 & 28534 & 139 & 168 & 17.26 & 0 & TL & \textbf{142} & \textbf{151} & TL2 \\ 
		bm33708 & 33708 & 1469 & 1512 & 2.84 & 1 & TL & 1475 & 1475 & \textbf{  3952.00} \\ 
		pla33810 & 33810 & 106684 & 137325 & 22.31 & 0 & TL & \textbf{110019} & \textbf{126933} & TL2 \\ 
		bby34656 & 34656 & 152 & 189 & 19.58 & 0 & TL & \textbf{155} & \textbf{164} & TL2 \\ 
		pba38478 & 38478 & 158 & 208 & 24.04 & 0 & TL & \textbf{160} & \textbf{163} & TL2 \\ 
		ch71009 & 71009 & 5564 & 6459 & 13.86 & 0 & TL & 5664 & 5665 & \textbf{ 13621.50} \\ 
		pla85900 & 85900 & 136667 & 185853 & 26.47 & 0 & TL & \textbf{142938} & \textbf{159484} & TL2 \\ 
		sra104815 & 104814 & 273 & 347 & 21.33 & 0 & TL & 277 & 277 & \textbf{ 15204.60} \\ 
		usa115475 & 115475 & 5090 & 6259 & 18.68 & 0 & TL & \textbf{5288} & \textbf{5419} & TL2 \\ 
		ara238025 & 238025 & 432 & 511 & 15.46 & 0 & TL & \textbf{441} & \textbf{458} & TL2 \\ 
		lra498378 & 498378 & 1647 & 2147 & 23.29 & 0 & TL & 1755 & 1755 & \textbf{ 44562.60} \\ 
		lrb744710 & 744710 & 586 & 694 & 15.56 & 0 & TL & \textbf{626} & \textbf{653} & TL2 \\ 
		\bottomrule
	\end{tabular}
	\endgroup
\end{table}
\begin{table}[ht]
	\centering
	\caption{Detailed results for the \texttt{TSPlib} instances with 
	$p=20$\label{ta:tsplib20}} 
	\begingroup\footnotesize
	\begin{tabular}{lr|rrrrr|rrr}
		\toprule
		& & \multicolumn{5}{|c|}{$\fCLH$} & 
		\multicolumn{3}{|c}{\cite{contardo2019scalable}} \\ name & $|V|$  & LB 
		& UB & $g[\%]$  & \#BC & $t[s]$ & LB & UB & $t[s]$ \\ \midrule
		rw1621 & 1621 & 186 & 186 & 0.00 & 376 & \textbf{    21.39} & 186 & 186 &     34.80 \\ 
		u1817 & 1817 & 306 & 310 & 1.29 & 6425 & TL & 309 & 309 & \textbf{  2171.80} \\ 
		rl1889 & 1889 & 2060 & 2201 & 6.41 & 632 & TL & 2089 & 2089 & \textbf{   405.00} \\ 
		mu1979 & 1979 & 751 & 751 & 0.00 & 0 & \textbf{     1.15} & 751 & 751 &     26.20 \\ 
		pr2392 & 2392 & 1694 & 1864 & 9.12 & 383 & TL & 1736 & 1736 & \textbf{  7500.10} \\ 
		d15112-modif-2500 & 2500 & 2519 & 2965 & 15.04 & 227 & TL & 2573 & 2573 & \textbf{  1795.90} \\ 
		pcb3038 & 3038 & 480 & 618 & 22.33 & 124 & TL & 493 & 493 & \textbf{ 54802.10} \\ 
		nu3496 & 3496 & 519 & 519 & 0.00 & 358 &    454.56 & 519 & 519 & \textbf{   128.30} \\ 
		ca4663 & 4663 & 7024 & 7024 & 0.00 & 0 & \textbf{    11.04} & 7024 & 7024 &     57.00 \\ 
		rl5915 & 5915 & 2036 & 2509 & 18.85 & 11 & TL & 2083 & 2083 & \textbf{ 76684.10} \\ 
		rl5934 & 5934 & 2068 & 2412 & 14.26 & 68 & TL & 2100 & 2100 & \textbf{  4367.20} \\ 
		tz6117 & 6117 & 1266 & 1306 & 3.06 & 155 & TL & 1278 & 1278 & \textbf{  1988.70} \\ 
		eg7146 & 7146 & 972 & 972 & 0.00 & 0 & \textbf{     3.16} & 972 & 972 &     51.30 \\ 
		pla7397 & 7397 & 78014 & 83860 & 6.97 & 19 & TL & 78817 & 78817 & \textbf{   608.20} \\ 
		ym7663 & 7663 & 899 & 899 & 0.00 & 5 & \textbf{    63.54} & 899 & 899 &    152.10 \\ 
		pm8079 & 8079 & 430 & 430 & 0.00 & 5 & \textbf{    29.19} & 430 & 430 &     91.40 \\ 
		ei8246 & 8246 & 488 & 533 & 8.44 & 295 & TL & 497 & 497 & \textbf{ 33150.50} \\ 
		ar9152 & 9152 & 2653 & 2818 & 5.86 & 80 & TL & 2695 & 2695 & \textbf{  6067.80} \\ 
		ja9847 & 9847 & 1724 & 1724 & 0.00 & 0 & \textbf{     8.76} & 1724 & 1724 &     66.70 \\ 
		gr9882 & 9882 & 877 & 877 & 0.00 & 4 & \textbf{   106.71} & 877 & 877 &    235.60 \\ 
		kz9976 & 9976 & 2762 & 3168 & 12.82 & 45 & TL & 2790 & 2790 & \textbf{  1540.20} \\ 
		fi10639 & 10639 & 1228 & 1500 & 18.13 & 16 & TL & 1245 & 1245 & \textbf{ 15688.60} \\ 
		rl11849 & 11849 & 2080 & 2629 & 20.88 & 0 & TL & \textbf{2119} & \textbf{2273} & TL2 \\ 
		usa13509 & 13509 & 43592 & 56610 & 23.00 & 0 & TL & \textbf{44740} & \textbf{46719} & TL2 \\ 
		brd14051 & 14051 & 796 & 909 & 12.43 & 70 & TL & 803 & 803 & \textbf{ 16721.90} \\ 
		mo14185 & 14185 & 914 & 1022 & 10.57 & 23 & TL & 919 & 919 & \textbf{  3597.10} \\ 
		ho14473 & 14473 & 492 & 566 & 13.07 & 82 & TL & 497 & 497 & \textbf{  1470.20} \\ 
		d15112 & 15112 & 2539 & 3359 & 24.41 & 0 & TL & \textbf{2581} & \textbf{2717} & TL2 \\ 
		it16862 & 16862 & 1059 & 1059 & 0.00 & 0 & \textbf{   134.58} & 1059 & 1059 &    563.60 \\ 
		d18512 & 18512 & 902 & 1218 & 25.94 & 0 & TL & \textbf{912} & \textbf{969} & TL2 \\ 
		vm22775 & 22775 & 919 & 967 & 4.96 & 42 & TL & 932 & 932 & \textbf{  1491.90} \\ 
		sw24978 & 24978 & 1403 & 1662 & 15.58 & 0 & TL & 1421 & 1421 & \textbf{ 14835.00} \\ 
		fyg28534 & 28534 & 118 & 154 & 23.38 & 0 & TL & \textbf{128} & \textbf{138} & TL2 \\ 
		bm33708 & 33708 & 1245 & 1455 & 14.43 & 3 & TL & 1257 & 1257 & \textbf{ 17808.70} \\ 
		pla33810 & 33810 & 89123 & 118855 & 25.02 & 0 & TL & \textbf{91302} & \textbf{106076} & TL2 \\ 
		bby34656 & 34656 & \textbf{128} & 159 & 19.50 & 0 & TL & \textbf{128} & \textbf{138} & TL2 \\ 
		pba38478 & 38478 & \textbf{136} & 175 & 22.29 & 0 & TL & \textbf{136} & \textbf{148} & TL2 \\ 
		ch71009 & 71009 & 4637 & 5868 & 20.98 & 0 & TL & \textbf{4798} & \textbf{5250} & TL2 \\ 
		pla85900 & 85900 & 115361 & 163300 & 29.36 & 0 & TL & \textbf{119643} & \textbf{136984} & TL2 \\ 
		sra104815 & 104814 & 227 & 294 & 22.79 & 0 & TL & \textbf{232} & \textbf{236} & TL2 \\ 
		usa115475 & 115475 & 4198 & 5855 & 28.30 & 0 & TL & \textbf{4453} & \textbf{4747} & TL2 \\ 
		ara238025 & 238025 & 359 & 466 & 22.96 & 0 & TL & \textbf{372} & \textbf{407} & TL2 \\ 
		lra498378 & 498378 & 1342 & 1726 & 22.25 & 0 & TL & \textbf{1442} & \textbf{1573} & TL2 \\ 
		lrb744710 & 744710 & 474 & 697 & 31.99 & 0 & TL & \textbf{524} & \textbf{580} & TL2 \\ 
		\bottomrule
	\end{tabular}
	\endgroup
\end{table}
\begin{table}[ht]
	\centering
	\caption{Detailed results for the \texttt{TSPlib} instances with 
	$p=25$\label{ta:tsplib25}} 
	\begingroup\footnotesize
	\begin{tabular}{lr|rrrrr|rrr}
		\toprule
		& & \multicolumn{5}{|c|}{$\fCLH$} & 
		\multicolumn{3}{|c}{\cite{contardo2019scalable}} \\ name & $|V|$  & LB 
		& UB & $g[\%]$  & \#BC & $t[s]$ & LB & UB & $t[s]$ \\ \midrule
		rw1621 & 1621 & 166 & 166 & 0.00 & 8 & \textbf{    12.83} & 166 & 166 &     38.70 \\ 
		u1817 & 1817 & 265 & 275 & 3.64 & 1766 & TL & 272 & 272 & \textbf{  1875.80} \\ 
		rl1889 & 1889 & 1833 & 2063 & 11.15 & 294 & TL & 1866 & 1866 & \textbf{  1154.80} \\ 
		mu1979 & 1979 & 639 & 639 & 0.00 & 0 & \textbf{     2.11} & 639 & 639 &     30.40 \\ 
		pr2392 & 2392 & 1492 & 1956 & 23.72 & 135 & TL & 1520 & 1520 & \textbf{ 23767.70} \\ 
		d15112-modif-2500 & 2500 & 2205 & 2581 & 14.57 & 225 & TL & 2243 & 2243 & \textbf{  5266.20} \\ 
		pcb3038 & 3038 & 425 & 545 & 22.02 & 104 & TL & \textbf{433} & \textbf{470} & TL2 \\ 
		nu3496 & 3496 & 441 & 441 & 0.00 & 11 & \textbf{    41.79} & 441 & 441 &    116.80 \\ 
		ca4663 & 4663 & 5966 & 5966 & 0.00 & 13 &    121.12 & 5966 & 5966 & \textbf{   118.20} \\ 
		rl5915 & 5915 & 1786 & 2201 & 18.86 & 20 & TL & \textbf{1823} & \textbf{1916} & TL2 \\ 
		rl5934 & 5934 & 1825 & 2113 & 13.63 & 19 & TL & 1850 & 1850 & \textbf{ 25300.40} \\ 
		tz6117 & 6117 & 1121 & 1426 & 21.39 & 237 & TL & \textbf{1152} & \textbf{1258} & TL2 \\ 
		eg7146 & 7146 & 855 & 855 & 0.00 & 0 & \textbf{     9.76} & 855 & 855 &     79.10 \\ 
		pla7397 & 7397 & 68964 & 78142 & 11.75 & 35 & TL & 69508 & 69508 & \textbf{   776.80} \\ 
		ym7663 & 7663 & 785 & 819 & 4.15 & 20 & TL & 785 & 785 & \textbf{   189.40} \\ 
		pm8079 & 8079 & 367 & 367 & 0.00 & 0 & \textbf{    50.52} & 367 & 367 &    184.80 \\ 
		ei8246 & 8246 & 421 & 532 & 20.86 & 71 & TL & \textbf{429} & \textbf{461} & TL2 \\ 
		ar9152 & 9152 & 2302 & 2717 & 15.27 & 50 & TL & 2355 & 2355 & \textbf{ 30531.70} \\ 
		ja9847 & 9847 & 1362 & 1362 & 0.00 & 0 & \textbf{    43.02} & 1362 & 1362 &    154.80 \\ 
		gr9882 & 9882 & 772 & 772 & 0.00 & 59 & \textbf{   236.54} & 772 & 772 &    532.30 \\ 
		kz9976 & 9976 & 2439 & 2800 & 12.89 & 72 & TL & 2479 & 2479 & \textbf{  4666.40} \\ 
		fi10639 & 10639 & 1075 & 1400 & 23.21 & 10 & TL & \textbf{1103} & \textbf{1173} & TL2 \\ 
		rl11849 & 11849 & 1823 & 2506 & 27.25 & 0 & TL & \textbf{1838} & \textbf{2099} & TL2 \\ 
		usa13509 & 13509 & 37471 & 46954 & 20.20 & 0 & TL & \textbf{38150} & \textbf{40578} & TL2 \\ 
		brd14051 & 14051 & 693 & 843 & 17.79 & 7 & TL & \textbf{703} & \textbf{737} & TL2 \\ 
		mo14185 & 14185 & 806 & 907 & 11.14 & 11 & TL & 818 & 818 & \textbf{ 42123.30} \\ 
		ho14473 & 14473 & 433 & 547 & 20.84 & 99 & TL & 436 & 436 & \textbf{ 11941.10} \\ 
		d15112 & 15112 & 2201 & 2877 & 23.50 & 0 & TL & \textbf{2233} & \textbf{2447} & TL2 \\ 
		it16862 & 16862 & 939 & 1140 & 17.63 & 6 & TL & 951 & 951 & \textbf{  5284.40} \\ 
		d18512 & 18512 & 792 & 1029 & 23.03 & 0 & TL & \textbf{795} & \textbf{881} & TL2 \\ 
		vm22775 & 22775 & 798 & 815 & 2.09 & 1 & TL & 799 & 799 & \textbf{  3102.50} \\ 
		sw24978 & 24978 & 1209 & 1567 & 22.85 & 0 & TL & \textbf{1233} & \textbf{1285} & TL2 \\ 
		fyg28534 & 28534 & \textbf{103} & 133 & 22.56 & 0 & TL & 102 & \textbf{112} & TL2 \\ 
		bm33708 & 33708 & 1069 & 1358 & 21.28 & 0 & TL & \textbf{1106} & \textbf{1140} & TL2 \\ 
		pla33810 & 33810 & 80661 & 108074 & 25.37 & 0 & TL & \textbf{81800} & \textbf{88861} & TL2 \\ 
		bby34656 & 34656 & \textbf{113} & 144 & 21.53 & 0 & TL & \textbf{113} & \textbf{128} & TL2 \\ 
		pba38478 & 38478 & \textbf{118} & 160 & 26.25 & 0 & TL & \textbf{118} & \textbf{134} & TL2 \\ 
		ch71009 & 71009 & 4013 & 5491 & 26.92 & 0 & TL & \textbf{4145} & \textbf{4321} & TL2 \\ 
		pla85900 & 85900 & 103546 & 145695 & 28.93 & 0 & TL & \textbf{107527} & \textbf{124693} & TL2 \\ 
		sra104815 & 104814 & 203 & 241 & 15.77 & 0 & TL & \textbf{206} & \textbf{216} & TL2 \\ 
		usa115475 & 115475 & 3649 & 5347 & 31.76 & 0 & TL & \textbf{3808} & \textbf{4453} & TL2 \\ 
		ara238025 & 238025 & 307 & 393 & 21.88 & 0 & TL & \textbf{319} & \textbf{357} & TL2 \\ 
		lra498378 & 498378 & 1109 & 1608 & 31.03 & 0 & TL & \textbf{1228} & \textbf{1339} & TL2 \\ 
		lrb744710 & 744710 & 415 & 636 & 34.75 & 0 & TL & \textbf{454} & \textbf{533} & TL2 \\ 
		\bottomrule
	\end{tabular}
	\endgroup
\end{table}
\begin{table}[ht]
	\centering
	\caption{Detailed results for the \texttt{TSPlib} instances with 
	$p=30$\label{ta:tsplib30}} 
	\begingroup\footnotesize
	\begin{tabular}{lr|rrrrr|rrr}
		\toprule
		& & \multicolumn{5}{|c|}{$\fCLH$} & 
		\multicolumn{3}{|c}{\cite{contardo2019scalable}} \\ name & $|V|$  & LB 
		& UB & $g[\%]$  & \#BC & $t[s]$ & LB & UB & $t[s]$ \\ \midrule
		rw1621 & 1621 & 147 & 147 & 0.00 & 32 & \textbf{    14.84} & 147 & 147 &     64.80 \\ 
		u1817 & 1817 & 240 & 251 & 4.38 & 2438 & TL & 241 & 241 & \textbf{  4228.30} \\ 
		rl1889 & 1889 & 1638 & 1885 & 13.10 & 226 & TL & 1657 & 1657 & \textbf{  1603.40} \\ 
		mu1979 & 1979 & 552 & 552 & 0.00 & 0 & \textbf{     4.21} & 552 & 552 &     34.40 \\ 
		pr2392 & 2392 & 1351 & 1765 & 23.46 & 362 & TL & \textbf{1379} & \textbf{1471} & TL2 \\ 
		d15112-modif-2500 & 2500 & 1980 & 2604 & 23.96 & 414 & TL & 2029 & 2029 & \textbf{ 50627.40} \\ 
		pcb3038 & 3038 & 382 & 508 & 24.80 & 235 & TL & \textbf{386} & \textbf{412} & TL2 \\ 
		nu3496 & 3496 & 396 & 396 & 0.00 & 43 & \textbf{   135.53} & 396 & 396 &    205.70 \\ 
		ca4663 & 4663 & 5361 & 5361 & 0.00 & 0 & \textbf{    85.75} & 5361 & 5361 &    152.60 \\ 
		rl5915 & 5915 & 1618 & 2210 & 26.79 & 10 & TL & \textbf{1624} & \textbf{1853} & TL2 \\ 
		rl5934 & 5934 & 1631 & 2116 & 22.92 & 60 & TL & \textbf{1658} & \textbf{1812} & TL2 \\ 
		tz6117 & 6117 & 1001 & 1304 & 23.24 & 224 & TL & \textbf{1025} & \textbf{1142} & TL2 \\ 
		eg7146 & 7146 & 744 & 744 & 0.00 & 0 & \textbf{     8.53} & 744 & 744 &     76.80 \\ 
		pla7397 & 7397 & 62057 & 71915 & 13.71 & 0 & TL & 63770 & 63770 & \textbf{ 20045.10} \\ 
		ym7663 & 7663 & 703 & 748 & 6.02 & 0 & TL & 715 & 715 & \textbf{   536.60} \\ 
		pm8079 & 8079 & 330 & 330 & 0.00 & 3 & \textbf{   119.69} & 330 & 330 &    459.80 \\ 
		ei8246 & 8246 & 384 & 508 & 24.41 & 42 & TL & \textbf{386} & \textbf{412} & TL2 \\ 
		ar9152 & 9152 & 2050 & 2648 & 22.58 & 14 & TL & 2080 & 2080 & \textbf{ 10145.70} \\ 
		ja9847 & 9847 & 1220 & 1220 & 0.00 & 0 & \textbf{    16.65} & 1220 & 1220 &    268.30 \\ 
		gr9882 & 9882 & 677 & 677 & 0.00 & 9 & \textbf{   334.72} & 677 & 677 &   1024.60 \\ 
		kz9976 & 9976 & 2191 & 2820 & 22.30 & 44 & TL & 2230 & 2230 & \textbf{ 11820.20} \\ 
		fi10639 & 10639 & 967 & 1251 & 22.70 & 10 & TL & \textbf{974} & \textbf{1017} & TL2 \\ 
		rl11849 & 11849 & \textbf{1649} & 2255 & 26.87 & 0 & TL & 1641 & \textbf{1855} & TL2 \\ 
		usa13509 & 13509 & 34137 & 45591 & 25.12 & 0 & TL & \textbf{34771} & \textbf{37036} & TL2 \\ 
		brd14051 & 14051 & 618 & 812 & 23.89 & 4 & TL & \textbf{620} & \textbf{668} & TL2 \\ 
		mo14185 & 14185 & 719 & 936 & 23.18 & 0 & TL & \textbf{732} & \textbf{767} & TL2 \\ 
		ho14473 & 14473 & 389 & 511 & 23.87 & 30 & TL & 396 & 396 & \textbf{ 29545.80} \\ 
		d15112 & 15112 & 1994 & 2662 & 25.09 & 0 & TL & \textbf{2009} & \textbf{2254} & TL2 \\ 
		it16862 & 16862 & 849 & 947 & 10.35 & 57 & TL & 854 & 854 & \textbf{ 16851.80} \\ 
		d18512 & 18512 & \textbf{712} & 963 & 26.06 & 0 & TL & \textbf{712} & \textbf{786} & TL2 \\ 
		vm22775 & 22775 & 690 & 802 & 13.97 & 7 & TL & 696 & 696 & \textbf{ 13315.20} \\ 
		sw24978 & 24978 & 1083 & 1514 & 28.47 & 0 & TL & \textbf{1096} & \textbf{1215} & TL2 \\ 
		fyg28534 & 28534 & \textbf{93} & 122 & 23.77 & 0 & TL & 92 & \textbf{108} & TL2 \\ 
		bm33708 & 33708 & 930 & 1221 & 23.83 & 0 & TL & \textbf{968} & \textbf{1065} & TL2 \\ 
		pla33810 & 33810 & 70365 & 95203 & 26.09 & 0 & TL & \textbf{71480} & \textbf{83187} & TL2 \\ 
		bby34656 & 34656 & 101 & 137 & 26.28 & 0 & TL & \textbf{102} & \textbf{133} & TL2 \\ 
		pba38478 & 38478 & \textbf{107} & 149 & 28.19 & 0 & TL & \textbf{107} & \textbf{125} & TL2 \\ 
		ch71009 & 71009 & 3581 & 4947 & 27.61 & 0 & TL & \textbf{3724} & \textbf{4098} & TL2 \\ 
		pla85900 & 85900 & 90212 & 127681 & 29.35 & 0 & TL & \textbf{93871} & \textbf{114244} & TL2 \\ 
		sra104815 & 104814 & 182 & 216 & 15.74 & 0 & TL & \textbf{186} & \textbf{204} & TL2 \\ 
		usa115475 & 115475 & 3258 & 4770 & 31.70 & 0 & TL & \textbf{3391} & \textbf{3874} & TL2 \\ 
		ara238025 & 238025 & 277 & 386 & 28.24 & 0 & TL & \textbf{288} & \textbf{368} & TL2 \\ 
		lra498378 & 498378 & 1015 & 1486 & 31.70 & 0 & TL & \textbf{1073} & \textbf{1192} & TL2 \\ 
		lrb744710 & 744710 & 370 & 580 & 36.21 & 0 & TL & \textbf{403} & \textbf{475} & TL2 \\ 
		\bottomrule
	\end{tabular}
	\endgroup
\end{table}

\end{document}

\begin{theorem}
	Let $UB$ be an upper bound on the optimal objective function value of 
	\myref{PC2} and 
	$i \in \cus$. Then the 
	inequality
	\begin{equation} 
	\sum_{j \colon UB \geq d_{ij}} y_{j} \geq 
	1\label{eq:loptimality2}\tag{O-OPT}
	\end{equation}
	does not cut off any optimal solution of \myref{PC2}.
\end{theorem}

\begin{proof}
	In any feasible solution with objective value at most $UB$, customer demand 
	point $i$ 
	must be assigned to a facility location $j$ with distance at most 
	$UB$.
\end{proof}